\title{The Boolean spectrum of a Grothendieck category}
\author{Henning Krause}
\address{Fakult\"at f\"ur Mathematik\\
Universit\"at Bielefeld\\ D-33501 Bielefeld\\ Germany}
\email{hkrause@math.uni-bielefeld.de}
\theoremstyle{plain}
\newtheorem{thm}{Theorem}[section]
\newtheorem*{Thm}{Theorem}
\newtheorem{prop}[thm]{Proposition}
\newtheorem{lem}[thm]{Lemma} 
\newtheorem{cor}[thm]{Corollary}
\theoremstyle{definition}
\newtheorem{defn}[thm]{Definition}
\newtheorem{exm}[thm]{Example}
\newtheorem{probl}[thm]{Problem}
\theoremstyle{remark}
\newtheorem{rem}[thm]{Remark}
\numberwithin{equation}{section}
\newcommand{\Cl}{\operatorname{Cl}}
\newcommand{\Clop}{\operatorname{Clop}}
\newcommand{\Coker}{\operatorname{Coker}}
\newcommand{\colim}{\operatorname*{colim}}
\newcommand{\End}{\operatorname{End}}
\newcommand{\Ext}{\operatorname{Ext}}
\newcommand{\fp}{\operatorname{fp}}
\newcommand{\Fpinj}{\operatorname{Fpinj}}
\newcommand{\Hom}{\operatorname{Hom}}
\newcommand{\id}{\operatorname{id}}
\renewcommand{\Im}{\operatorname{Im}}
\newcommand{\Ind}{\operatorname{Ind}}
\newcommand{\Inj}{\operatorname{Inj}}
\newcommand{\Ker}{\operatorname{Ker}}
\renewcommand{\mod}{\operatorname{mod}}
\newcommand{\Mod}{\operatorname{Mod}}
\newcommand{\Noeth}{\operatorname{Noeth}}
\newcommand{\Ob}{\operatorname{Ob}}
\newcommand{\PInj}{\operatorname{PInj}}
\newcommand{\PSpc}{\operatorname{PSpc}}
\newcommand{\PSpec}{\operatorname{PSpec}}
\newcommand{\rad}{\operatorname{rad}}
\newcommand{\Rad}{\operatorname{Rad}}
\newcommand{\soc}{\operatorname{soc}}
\newcommand{\Sp}{\operatorname{Sp}}
\newcommand{\Spc}{\operatorname{Spc}}
\newcommand{\Spec}{\operatorname{Spec}}
\newcommand{\supp}{\operatorname{supp}}
\newcommand{\suppex}{\operatorname{supp}_{\mathrm{ex}}}
\newcommand{\Ab}{\mathrm{Ab}}
\newcommand{\Ess}{\mathrm{Ess}} 
\newcommand{\ex}{\mathrm{ex}}
\newcommand{\op}{\mathrm{op}}
\newcommand{\BRing}{\mathbf{BRing}} 
\newcommand{\Top}{\mathbf{Top}} 
\newcommand{\two}{\mathbf{2}}
\newcommand{\iso}{\xrightarrow{\raisebox{-.4ex}[0ex][0ex]{$\scriptstyle{\sim}$}}}
\newcommand{\longiso}{\xrightarrow{\ \raisebox{-.4ex}[0ex][0ex]{$\scriptstyle{\sim}$}\ }}
\newcommand{\lto}{\longrightarrow}
\newcommand{\smatrix}[1]{\left[\begin{smallmatrix}#1\end{smallmatrix}\right]}
\newcommand{\xto}{\xrightarrow}
\newcommand*{\intref}[2]{\def\tmp{#1}\ifx\tmp\empty\hyperref[#2]{\ref*{#2}}\else\hyperref[#2]{#1~\ref*{#2}}\fi}
\def\A{\mathcal A} 
\def\B{\mathcal B} 
\def\C{\mathcal C}
\def\D{\mathcal D} 
\def\F{\mathcal F} 
\def\G{\mathcal G}
\def\calS{\mathcal S} 
\def\T{\mathcal T} 
\def\U{\mathcal U}
\def\V{\mathcal V}
\def\X{\mathcal X}
\def\bfB{\mathbf B} 
\def\bfD{\mathbf D} 
\def\bfK{\mathbf K}
\def\bfL{\mathbf L} 
\def\bfP{\mathbf P} 
\def\bfR{\mathbf R}
\def\bfS{\mathbf S}
\def\bbQ{\mathbb Q}
\def\bbZ{\mathbb Z}
\newcommand{\frp}{\mathfrak{p}}
\def\a{\alpha}
\def\b{\beta}
\def\p{\phi}
\def\s{\sigma}
\def\t{\tau}
\def\La{\Lambda}
\begin{document}

\keywords{Boolean lattice, definable subcategory, Grothendieck category, spectral
  category, support, Ziegler spectrum}

\subjclass[2020]{18E10 (primary), 16D70, 16E50, 18E40, 18E45 (secondary)}

\begin{abstract}
  A notion of support for objects in any Grothendieck category is
  introduced. This is based on the spectral category of a Grothendieck
  category and uses its Boolean lattice of localising
  subcategories. The support provides a classification of all
  subcategories that are closed under arbitrary coproducts,
  subobjects, and essential extensions.  There is also a notion of
  exact support which classifies certain thick subcategories.  As an
  application, the coproduct decompositions of objects are described
  in terms of Boolean lattices. Also, for any ring Crawley-Boevey's
  correspondence between definable subcategories of modules and closed
  subsets of the Ziegler spectrum is extended.
\end{abstract}

\date{December 10, 2024}

\maketitle


\section{Introduction}

The spectrum $\Sp\A$ of a Grothendieck category $\A$ is the set of
isomorphism classes of indecomposable injective objects. This
definition is due to Gabriel, and he uses this spectrum when he
reconstructs a noetherian scheme from its category of quasi-coherent
sheaves \cite{Ga1962}. A finer invariant is introduced in work of
Gabriel and Oberst \cite{GO1966}. The spectral category $\Spec\A$ is
again a Grothendieck category with the additional property that every
short exact sequence is split exact. There is a canonical functor
$\A\to\Spec\A$ which identifies $\Sp\A$ with the isomorphism classes of
simple objects in $\Spec\A$.

In this work we study the lattice $\bfL(\Spec\A)$ of localising
subcategories of $\Spec\A$, which one may think of as an intermediate
between $\Sp\A$ and $\Spec\A$.  This is actually a complete
Boolean lattice; so it identifies via Stone duality with the Boolean
lattice of clopen sets of its spectrum
\[\Spc\A:=\Spec(\bfL(\Spec\A))\] which we call the \emph{Boolean
  spectrum} of $\A$. There is a canonical embedding
\[\Sp\A\lhook\joinrel\xrightarrow{\ \ \ }\Spc\A\]
which is a bijection if and only if $\Spec\A$ is \emph{discrete},
which means that every object decomposes into a coproduct of simple
objects. This happens, for example, when $\A$ is locally
noetherian. 

To each object $X\in\A$ we assign its support
$\supp(X)\subseteq\Spc\A$, and this sets up a bijection
\begin{equation}\label{eq:supp(C)}
  \C \longmapsto\supp(\C)=\bigvee_{X\in\C}\supp(X)
\end{equation}
between the essentially closed subcategories of $\A$ and the clopen subsets of
$\Spc\A$ (Theorem~\ref{th:spectral}), where a full subcategory is
called \emph{essentially closed} if it is closed under arbitrary coproducts,
subobjects, and essential extensions. Let $\langle X\rangle$ denote
the smallest essentially closed subcategory containing $X$. The support enjoys
the following universal property, which is the analogue of a property
of support for tensor triangulated categories due to Balmer
\cite{Ba2005}. We do not assume any monoidal structure, but our
analogue seems reasonable given that one has the identity
$\langle X\otimes Y\rangle=\langle X\rangle\cap \langle Y\rangle$ in
the tensor triangulated setting.

\begin{Thm}[Theorem~\ref{th:support}]
  The map $\supp\colon\Ob\A\to\Clop(\Spc\A)$ has the following properties:
\begin{enumerate}
\item[(S$0$)] $\supp(X)=\varnothing$ for $X=0$.
\item[(S$1$)] $\supp(X)=\Spc\A$ for every generator $X$ of $\A$.
\item[(S$\vee$)] $\supp(X)\cup\supp(Y)=\supp(X\oplus Y)$ for all objects $X,Y$.
\item[(S$\wedge$)] $\supp(X)\cap\supp(Y)=\supp(Z)$ for all objects $X,Y,Z$ 
  with $\langle X\rangle\cap \langle Y\rangle=\langle Z\rangle$.
\end{enumerate}
Moreover, for any pair $(T,\s)$ which consists of a topological space
$T$ and a map $\s\colon\Ob\A\to\Clop(T)$ satisfying the analogue of
the above conditions, there exists a unique continuous map
$f\colon T\to \Spc\A$ such that $\s(X)=f^{-1}(\supp(X))$ for all $X$.
\end{Thm}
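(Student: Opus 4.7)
The plan is to verify the four axioms, then build $f$ pointwise as an ultrafilter-valued function, relying throughout on the spectral structure of $\Spec\A$.

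For the axioms, I would invoke Theorem~\ref{th:spectral}, under which \eqref{eq:supp(C)} is a complete lattice isomorphism between essentially closed subcategories of $\A$ and $\Clop(\Spc\A)\cong\bfL(\Spec\A)$. Then (S0) reflects that $\langle 0\rangle$ is the bottom; (S1) reduces to the standard observation that the smallest essentially closed subcategory containing a generator is $\A$; and (S$\vee$), (S$\wedge$) follow from $\langle X\oplus Y\rangle=\langle X\rangle\vee\langle Y\rangle$ together with the preservation of finite joins and meets under the bijection.

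For the universal property I define pointwise
\[\mathcal{U}_t:=\{\C\in\bfL(\Spec\A): t\notin\s(Y)\ \text{for every}\ Y\in\C^\perp\}\]
and let $f(t)$ be the corresponding point of $\Spc\A=\Spec(\bfL(\Spec\A))$. Upward closure of $\mathcal{U}_t$ is immediate, and $0\notin\mathcal{U}_t$ follows from (S1) applied to a generator (which lies in $0^\perp=\A$). The ``at least one of $\C,\C^\perp$'' half of ultraness uses the instance of (S$\wedge$) with $\langle X\rangle\cap\langle Y\rangle=0=\langle 0\rangle$, which forces $\s(X)\cap\s(Y)=\varnothing$ whenever $X\in\C$ and $Y\in\C^\perp$.

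The main obstacle is the remaining ultrafilter properties, the defining identity, and uniqueness, all of which reduce to a single decomposition lemma. Since $\Spec\A$ is spectral, every localising subcategory $\L$ yields a Bousfield decomposition $\Spec\A=\L\times\L^\perp$; splitting first along $\L_1$-torsion and then invoking Boolean distributivity to place the torsion-free part inside $\L_2$, any $Y\in\L_1\vee\L_2$ splits in $\Spec\A$ as $Y_1\oplus Y_2$ with $Y_i\in\L_i$. Pulling this back via \eqref{eq:supp(C)} and (S$\vee$) for $\s$ gives $\s(Y)=\s(Y_1)\cup\s(Y_2)$. Applied to a generator $G$ along $\C\vee\C^\perp=\A$, this yields the ``at most one'' half of ultraness and the equivalence $\C\in\mathcal{U}_t\iff\exists X\in\C,\ t\in\s(X)$; applied to objects of $(\C_1\wedge\C_2)^\perp=\C_1^\perp\vee\C_2^\perp$, it delivers closure under meets; and the defining identity $f^{-1}(\supp(X))=\s(X)$, together with continuity via the formula $f^{-1}(\supp(\C))=\bigcup_{X\in\C}\s(X)$, follow from these. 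For uniqueness, if $g$ is another such continuous map, then for any $\C$, decomposing a generator along $\C,\C^\perp$ and applying the finite-join behaviour of the ultrafilter at $g(t)$ forces $U_{g(t)}=\mathcal{U}_t$, so $g=f$.
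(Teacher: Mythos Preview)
Your approach is correct and arrives at the same map $f$ as the paper, but the paper takes a much shorter route. The key observation the paper isolates (Lemma~\ref{le:support-hom}) is that the axioms (S$0$)--(S$\wedge$), together with Corollary~\ref{co:gen}, say precisely that $\langle X\rangle\mapsto\s(X)$ is a well-defined Boolean lattice homomorphism $\bfS(\A)\to\Clop(T)$. Once that is in place, the existence and uniqueness of $f$ is a one-line application of the Stone adjunction \eqref{eq:stone}, and the explicit description $f(p)=\{\langle X\rangle:p\notin\s(X)\}$ drops out as the corresponding prime ideal. Your construction instead builds $f(t)$ by hand as the ultrafilter complementary to this prime ideal, and your verifications of the ultrafilter axioms, the defining identity, and uniqueness amount to re-deriving the relevant piece of Stone duality in this particular instance. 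The decomposition lemma you invoke is the paper's Proposition~\ref{pr:spec-join} (or Lemma~\ref{le:tf3} in the finite case); note that to pull a splitting back from $\Spec\A$ to genuine objects of $\A$ you should pass to injective envelopes and use Lemma~\ref{le:spec-decomp-inj}, since an arbitrary object of $\A$ need not inherit the decomposition. What your approach buys is an explicit pointwise description independent of the Stone machinery; what the paper's approach buys is brevity and the recognition that the universal property is purely a fact about Boolean lattices once Theorem~\ref{th:spectral} is available.
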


The local structure of an object can be described in terms of its
support. More precisely, we introduce for each injective object $X$ a
complete Boolean lattice $\bfD(X)$ which is given by the clopen subsets of
$\supp(X)$; its elements parameterise the direct summands of $X$ up to
an appropriate equivalence relation and there is a canonical
isomorphism
\[\bfD(X)\longiso\bfB(\End(X)/J(\End(X))),\] where $\bfB(\La)$ denotes 
the lattice of central idempotents of a ring $\La$
(Theorem~\ref{th:decomp}). The ring $\End(X)/J(\End(X))$ is von
Neumann regular; so an elaborated theory of decompositions and types
applies \cite{Ka1951,MvN1936}. It is interesting to note that any
complete Boolean lattice can be realised as $\bfD(X)$ for some
injective object $X$ and some appropriate Grothendieck category.

The support of an object has the useful property that $\supp(X)=\varnothing$ implies
$X=0$. This  one cannot expect when using $\Sp\A$ as target for
a support function, because $\Sp\A$ may be empty. Another option for
defining support would be to use as target the frame $\bfL(\A)$ of
localising subcategories of $\A$. However, its set of points may be
empty as well \cite{St2024}. The following result shows that $\Spc\A$
is an invariant which encompasses both $\Sp\A$ and $\bfL(\A)$.

\begin{Thm}[Theorem~\ref{th:spectral-localising}]
  Taking a localising subcategory to its support induces an embedding
  of frames
\[\bfL(\A)\hookrightarrow\bfL(\Spec\A)\iso\Clop(\Spc\A).\]
\end{Thm}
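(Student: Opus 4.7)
Let $Q\colon\A\to\Spec\A$ denote the canonical exact functor, which preserves arbitrary coproducts. Under the Stone-duality isomorphism $\bfL(\Spec\A)\iso\Clop(\Spc\A)$, the clopen set $\supp(X)$ corresponds to the smallest localising subcategory of $\Spec\A$ containing $Q(X)$, so the map $\supp$ takes the form $\C\mapsto\Loc_{\Spec\A}(Q(\C))$. The plan is to verify this is a frame homomorphism and that it is injective; the latter is the main obstacle.

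For preservation of arbitrary joins, the key observation is that $\Spec\A$ is spectral, so any short exact sequence $0\to X'\to Y\to X''\to 0$ in $\A$ becomes split after applying $Q$; together with (S$\vee$) this gives $\supp(Y)=\supp(X')\cup\supp(X'')$. By transfinite induction on the stages of the localising closure (subobjects, quotients, extensions, coproducts), $\supp(Y)\subseteq\bigvee_i\supp(\C_i)$ for every $Y\in\bigvee_i\C_i$, which yields $\supp(\bigvee_i\C_i)=\bigvee_i\supp(\C_i)$. Preservation of finite meets follows from axiom (S$\wedge$) combined with the infinite distributivity of the Boolean frame $\Clop(\Spc\A)$: writing $\supp(\C)\cap\supp(\D)$ as a join of terms $\supp(X)\cap\supp(Y)$ with $X\in\C$ and $Y\in\D$, each such term equals $\supp(Z_{X,Y})$ for an object $Z_{X,Y}$ realising $\langle X\rangle\cap\langle Y\rangle$, and one argues that the join over all such $Z_{X,Y}$ coincides with $\supp(\C\cap\D)$.

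The principal difficulty is injectivity. My plan is to exhibit a retraction $R\colon\bfL(\Spec\A)\to\bfL(\A)$ defined by $R(\mathcal{E})=Q^{-1}(\mathcal{E})$; this is a localising subcategory of $\A$ by exactness and cocontinuity of $Q$. The containment $\C\subseteq R(\supp(\C))$ is formal. What requires work is the detection property that $Q(X)\in\Loc_{\Spec\A}(Q(\C))$ implies $X\in\C$. To establish this, I would invoke the compatibility of the spectral construction with Gabriel localisation, yielding an equivalence $\Spec(\A/\C)\simeq\Spec\A/\Loc_{\Spec\A}(Q(\C))$ from the universal property characterising $Q$. Under this equivalence, $X\in\C$ iff $X$ vanishes in $\A/\C$ iff $Q(X)$ vanishes in the quotient $\Spec\A/\Loc_{\Spec\A}(Q(\C))$ iff $Q(X)\in\Loc_{\Spec\A}(Q(\C))$. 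This gives $R\circ\supp=\id_{\bfL(\A)}$ and hence the injectivity of $\supp$. The hardest step, which I expect to be the bottleneck, is the compatibility $\Spec(\A/\C)\simeq\Spec\A/\Loc_{\Spec\A}(Q(\C))$; if this is already established earlier in the paper (which is plausible given the setup of $\Spec$ as a 2-functor), then the argument reduces to a short formal verification.
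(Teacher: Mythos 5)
The central step of your plan — the retraction $R(\mathcal E)=Q^{-1}(\mathcal E)$ satisfying $R\circ\supp=\id$ — does not work, for two related reasons.

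First, $Q^{-1}(\mathcal E)$ is not a localising subcategory of $\A$. The functor $Q=P\colon\A\to\Spec\A$ is only \emph{left} exact (it inverts essential monos and is right-adjoint-like on injectives), so it has no reason to send epimorphisms to epimorphisms, and indeed $P^{-1}(\mathcal E)$ is typically not closed under quotients. What Lemma~\ref{le:spectral}(1) actually gives is that $P^{-1}(\mathcal E)$ is \emph{essentially closed}, and the paper explicitly warns at the start of \S\ref{se:coh-stable-sub} that essentially closed subcategories need not be closed under quotients.

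Second, the detection property you want — $P(X)\in\langle P(\C)\rangle$ implies $X\in\C$ — is simply false for non-stable $\C$. One has $\supp(\C)=P(\bar\C)$, where $\bar\C$ is the closure of $\C$ under essential extensions, and $P^{-1}(\supp(\C))=\bar\C$ by Theorem~\ref{th:spectral}; so detection fails exactly on $\bar\C\smallsetminus\C$. Concretely, take $\La=\smatrix{k&k\\0&k}$ and $\C$ the localising subcategory generated by the simple $S_1$ with non-split injective envelope $S_1\rightarrowtail I_1$; then $P(I_1)=P(S_1)\in\supp(\C)$ yet $I_1\notin\C$. Your argument via $\Spec(\A/\C)\simeq\Spec\A\big/\Loc_{\Spec\A}(Q(\C))$ cannot rescue this, because $\Spec$ is contravariant in the relevant sense: the paper's Functoriality lemma produces functors $\Spec(\A/\C)\to\Spec\A$ and $\Spec\A\to\Spec\C$, not a quotient $\Spec\A\to\Spec(\A/\C)$ compatible with $P$ and the Gabriel quotient $\A\to\A/\C$. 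The abstract equivalence $\Spec(\A/\C)\simeq\Spec\A/\supp(\C)$ exists, but the square you would need to close the chain of iffs does not commute, as the counterexample already shows.

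The paper sidesteps both problems by working with the torsion-free class rather than the torsion class: Lemma~\ref{le:tf1} identifies localising subcategories $\C$ with the essentially closed, product-closed subcategories $\D=\C^\perp$, and the retraction is $\tau(\U)={}^\perp(P^{-1}(\U^\perp))$. This is well-defined precisely because $P^{-1}$ of a localising subcategory of $\Spec\A$ is essentially closed (not localising), after which one takes ${}^\perp(-)$ to land back in $\bfL(\A)$. The identity $\tau\s=\id$ is then verified using Lemma~\ref{le:torsion-pair}. Your outline for preservation of arbitrary joins and finite meets is in the right spirit, but even there the paper's route (through $\C\mapsto\C^\perp$ and Lemma~\ref{le:tf2}) is cleaner; note in particular that the crucial finiteness restriction in Lemma~\ref{le:tf2} is what limits preservation to \emph{finite} meets. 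I would recommend abandoning the naive $Q^{-1}$ retraction and re-deriving injectivity via the orthogonal complement as the paper does.
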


For example, when $\A=\Mod\La$ equals the module category of a commutative
noetherian ring $\La$, then $\Sp\A\iso\Spc\A$ identifies with the set
of prime ideals $\Spec\La$, and the above correspondence
\eqref{eq:supp(C)} extends Gabriel's bijection between the localising
subcategories of $\Mod\La$ and the specialisation closed subsets of
$\Spec\La$, which is one of the ingredients for the reconstruction of a
noetherian scheme \cite{Ga1962}.

The work of Gabriel and his use of the spectrum of indecomposable
injectives has been a great inspiration for non-commutative algebraic
geometry; see for example \cite{Br2018,Ka2015,Ro1995} where several possible
definitions of a spectrum of an abelian category are discussed. Here
we follow a somewhat different route. For not necessarily commutative
rings there is a spectrum which receives much attention and provides
another motivation for this work.

The \emph{Ziegler spectrum} of a ring $\La$ is given by the
isomorphism classes of indecomposable pure-injective $\La$-modules,
together with a topology introduced by Ziegler in model-theoretic
terms \cite{Zi1984}. Let $\Ind\La$  denote the Ziegler spectrum
of $\La$. This identifies with the spectrum of a locally coherent
Grothendieck category; so the above theory applies. To be more
precise, the module category $\Mod\La$ embeds into its purity category
$\bfP(\Mod\La)$, and then $\Ind\La$ identifies with
$\Sp\bfP(\Mod\La)$. The Boolean spectrum provides an extension
\begin{equation}\label{eq:Ind-PSpc}
  \Ind\La\iso\Sp\bfP(\Mod\La)\hookrightarrow\Spc\bfP(\Mod\La)=:
  \PSpc\La
\end{equation}
and we propose to study this as an invariant of $\La$.

The Ziegler spectrum has been used in various contexts, for instance
when studying representations of Artin algebras, but also in more
general categorical settings, including derived and stable categories;
see \cite{Pr2009} for an encyclopedic survey. An important feature of
this spectrum is Crawley-Boevey's fundamental correspondence
\[\C\longmapsto\C\cap\Ind\La\]
between definable subcategories of $\Mod\La$ and Ziegler closed
subsets of $\Ind\La$; see \cite{CB1998}.  In order to illustrate the
use of the Boolean spectrum $\PSpc\La$ we extend this correspondence
as follows.

\begin{Thm}[Theorem~\ref{th:pure-spectral}]
  The assignment $\C\mapsto\supp(\C)$ induces an inclusion preserving bijection between
  \begin{enumerate}
  \item the set of subcategories of $\Mod\La$ that are closed under arbitrary
    direct sums, pure subobjects, and pure-injective envelopes, and
  \item the clopen subsets of $\PSpc\La$.
  \end{enumerate}
  In particular, the embedding \eqref{eq:Ind-PSpc} identifies $\C\cap\Ind\La$
with  $\supp(\C)\cap\Ind\La$ when $\C$ is definable.  
\end{Thm}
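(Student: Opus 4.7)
The strategy is to transport the statement across the purity embedding $i\colon\Mod\La\hookrightarrow\bfP(\Mod\La)$ and then apply Theorem~\ref{th:spectral} to the Grothendieck category $\bfP(\Mod\La)$. First I would recall the standard dictionary provided by $i$: it preserves arbitrary coproducts; pure monomorphisms in $\Mod\La$ correspond to those monomorphisms in $\bfP(\Mod\La)$ whose source and target lie in the essential image of $i$; pure-injective modules correspond to the injective objects of $\bfP(\Mod\La)$, all of which lie in the essential image of $i$; and pure-injective envelopes correspond to injective envelopes. In this dictionary, the three closure conditions in (1) translate precisely into stability under coproducts, subobjects, and essential extensions for the corresponding subcategory of $\bfP(\Mod\La)$.

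The principal step, and in my view the main obstacle, is to establish that the assignments
\[\C\longmapsto\langle i(\C)\rangle\qquad\text{and}\qquad\D\longmapsto i^{-1}(\D)\]
are mutually inverse bijections between the subcategories of $\Mod\La$ in (1) and the essentially closed subcategories of $\bfP(\Mod\La)$. The inclusion $\C\subseteq i^{-1}(\langle i(\C)\rangle)$ is immediate, while the reverse inclusion requires showing that any module $M$ with $i(M)$ in the essential closure of $i(\C)$ is already reachable from $\C$ by iterating pure submodules, direct sums, and pure-injective envelopes; this is a structural induction matching each of the three generating operations of $\langle i(\C)\rangle$ with its pure analogue in $\Mod\La$. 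For the other composition, one uses that an essentially closed $\D\subseteq\bfP(\Mod\La)$ is generated, as an essentially closed subcategory, by the injective objects it contains --- every $X\in\D$ embeds into $E(X)\in\D$, and $\D$ is closed under subobjects --- together with the fact that all injective objects of $\bfP(\Mod\La)$ lie in $i(\Mod\La)$. Hence $\D=\langle i(i^{-1}(\D))\rangle$.

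Combining this bijection with Theorem~\ref{th:spectral} applied to $\bfP(\Mod\La)$ yields the first part of the statement, with support computed as in \eqref{eq:supp(C)}. For the addendum concerning definable subcategories, every definable $\C$ satisfies the three conditions in (1), and Crawley-Boevey's theorem asserts that such $\C$ is determined by $\C\cap\Ind\La$. Under \eqref{eq:Ind-PSpc}, an indecomposable pure-injective $P$ belongs to $\supp(\C)$ if and only if $i(P)\in\langle i(\C)\rangle$, which by the bijection above is equivalent to $P\in\C$. The equality $\C\cap\Ind\La=\supp(\C)\cap\Ind\La$ follows, completing the proof.
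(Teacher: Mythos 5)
Your overall strategy coincides with the paper's: view $\Mod\La$ as a full subcategory of $\bfP(\Mod\La)$, set up a bijection between pure-essentially closed subcategories of $\Mod\La$ and essentially closed subcategories of $\bfP(\Mod\La)$, and then apply Theorem~\ref{th:spectral}. Your maps $\C\mapsto\langle i(\C)\rangle$ and $\D\mapsto i^{-1}(\D)$ are the same pair (up to which one you call forward) that the paper uses, and your treatment of the addendum on definable subcategories and the composition $\D\mapsto i^{-1}(\D)\mapsto\langle i(i^{-1}(\D))\rangle$ is correct.

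However, the step you single out as the main obstacle, namely the inclusion $i^{-1}\bigl(\langle i(\C)\rangle\bigr)\subseteq\C$, is not established by the argument you sketch. A ``structural induction matching each of the three generating operations with its pure analogue'' does not go through naively: when one passes to a subobject of an fp-injective object in $\bfP(\Mod\La)$, the result is in general no longer fp-injective, so it has no pure analogue in $\Mod\La$; there is nothing to match. This is precisely why the paper replaces the operation-by-operation matching with the observation that any essentially closed subcategory $\U$ of $\bfP(\Mod\La)$ is determined by $\U\cap\Inj\bfP(\Mod\La)$ (every object of $\U$ is a subobject of its injective envelope, which lies in $\U$), and that $\Inj\bfP(\Mod\La)=\PInj(\Mod\La)$ lies entirely inside $i(\Mod\La)$. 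You actually invoke exactly this insight to handle the other composition, but not here, where it is equally needed. Concretely, one should argue: if $i(M)\in\langle i(\C)\rangle$, then $E(i(M))$ lies in $\langle i(\C)\rangle$ and is injective, hence it is a direct summand of $E\bigl(\coprod_\a E(i(C_\a))\bigr)$ for a family $(C_\a)$ in $\C$ (via Lemma~\ref{le:spec-decomp-inj} and Proposition~\ref{pr:spec-join} in $\Spec\bfP(\Mod\La)$); transporting this through $i$ exhibits the pure-injective envelope of $M$ as a direct summand of the pure-injective envelope of $\coprod_\a\mathrm{PE}(C_\a)$, which lies in $\C$ by the three closure conditions, and $M$ is then a pure subobject of an object of $\C$. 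With this repair your proof matches the paper's.
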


Returning to the above notion of support $X\mapsto \supp(X)$ for an
arbitrary Grothendieck category, we mention that there is also an
exact version $X\mapsto\suppex(X)$ such that for each exact sequence
$0\to X_1\to X_2\to X_3\to 0$  one
has
\[\suppex(X_i)\subseteq\suppex(X_j)\cup\suppex(X_k)\qquad\text{when}\qquad\{i,j,k\}=\{1,2,3\}.\]
This is in line with Neeman's celebrated classification of localising
subcategories for the derived category of a commutative noetherian
ring \cite{Ne1992}, because it agrees with his notion of support for a
complex when specialised to a module. The exact support provides a
classification of certain thick subcategories
(Theorem~\ref{th:cohstable}), which is an analogue of the bijective
correspondence \eqref{eq:supp(C)} and again not requiring any
noetherianess assumption. This generalises a classification for
modules over a commutative noetherian ring from \cite{Ta2009} and
another more recent classification for locally noetherian categories
\cite{WM2024}.

Let us give a quick outline of this paper. In \S\ref{se:spec} we
recall the basic facts about spectral categories and introduce the
Boolean spectrum of a Grothendieck category. In \S\ref{se:spec-sub} we
set up the support for Grothendieck categories and establish the
classification of essentially closed subcategories. A first
application is given in \S\ref{se:decomp}, where coproduct
decompositions of objects are studied in terms of Boolean lattices.
The following \S\ref{se:coh-stable-sub} provides the derived analogue
of the results from \S\ref{se:spec-sub}, which leads to the exact
support. The final \S\ref{se:ex-definable} discusses the applications
for module categories, but in the more general setting of exactly
definable categories.

\section{Spectral Grothendieck categories}\label{se:spec}

Let $\A$ be a Grothendieck category. We recall the notion of its
spectral category from the work of Gabriel and Oberst
\cite{GO1966}. Then we study the lattice of localising subcategories
and use this to define a space, which we call the Boolean spectrum of
$\A$.

\subsection*{Spectral categories}
We write $\Spec\A$ for the \emph{spectral category} of $\A$ which is
obtained from $\A$ by formally inverting all essential monomorphisms;
see \cite{GO1966} or \cite[V.7]{St1975}.  The category $\Spec\A$ is
again a Grothendieck category in which all exact sequences are split
exact. A Grothendieck category with this property is called
\emph{spectral}.  The canonical functor
\[P\colon\A\lto\Spec\A:=\A[\Ess^{-1}]\] maps any injective envelope
$X\to E(X)$ in $\A$ to an isomorphism and it identifies the
indecomposable injective objects in $\A$ with the simple objects in
$\Spec\A$. These properties are clear from the definition. Also, $P$
is left exact and preserves all coproducts since the class of
essential monomorphisms is closed under all coproducts.
More precisely, for any directed union $X=\sum_\a X_\a$ in $\A$ we have  $P(X)=\sum_\a P(X_\a)$.
We write
$\Sp\A$ for the set of isomorphism classes of indecomposable injective
objects in $\A$, and $P$ induces a bijection $\Sp\A\iso\Sp(\Spec\A)$.

Let $\Inj\A$ denote the full subcategory of injective objects in
$\A$. For later use we note that $P$ induces an equivalence
\begin{equation}\label{eq:Spec}
  (\Inj\A)/\Rad(\Inj\A)\longiso\Spec\A,
\end{equation}
where $\Rad\C$ denotes the Jacobson radical of an additive category
$\C$; see \cite[Proposition~2.5.9]{Kr2022}. Recall that $\Rad\C$ is
an ideal of morphisms that is given by subgroups
\[\Rad(X,Y)\subseteq\Hom(X,Y)\] for each pair of objects $X,Y$ such that
$\Rad(X,X)$ equals the Jacobson radical of the endomorphism ring
$\End(X)$. A useful consequence of \eqref{eq:Spec} is that for any
pair of objects $X,Y\in\A$ we have
\begin{equation}\label{eq:Spec-iso}
 P(X)\cong P(Y)\qquad\iff\qquad  E(X)\cong E(Y)
\end{equation}
where $E(X)$ denotes an injective envelope of $X$; see also
\cite[Satz~1.5]{GO1966}.

The following lemma shows that the spectral category controls the
decompositions of injective objects.

\begin{lem}\label{le:spec-decomp-inj}
  Let $X\in\A$ be an injective object and $P(X)=Y_1\oplus Y_2$ a
  decomposition. Then there exists a decomposition $X=X_1\oplus X_2$
  such that $Y_i=P(X_i)$ for $i=1,2$.
\end{lem}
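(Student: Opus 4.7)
The plan is to lift the splitting $P(X)=Y_1\oplus Y_2$ from $\Spec\A$ back to a splitting of $X$ in $\A$ by using the equivalence \eqref{eq:Spec} in the form $(\Inj\A)/\Rad(\Inj\A)\iso\Spec\A$. The decomposition in $\Spec\A$ provides a section $s\colon Y_1\to P(X)$ and a retraction $p\colon P(X)\to Y_1$ with $ps=1_{Y_1}$. By essential surjectivity of \eqref{eq:Spec} there is an injective object $X_1'\in\A$ together with an isomorphism $\phi\colon P(X_1')\iso Y_1$ in $\Spec\A$. By fullness of \eqref{eq:Spec}, the morphisms $s\phi\colon P(X_1')\to P(X)$ and $\phi^{-1}p\colon P(X)\to P(X_1')$ can be lifted to morphisms $\tilde s\colon X_1'\to X$ and $\tilde p\colon X\to X_1'$ in $\A$.

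Next, $P(\tilde p\tilde s)=1_{P(X_1')}$, so $\tilde p\tilde s-1_{X_1'}\in\Rad(X_1',X_1')=J(\End_\A(X_1'))$; in particular $\tilde p\tilde s\in 1+J(\End_\A(X_1'))$ is a unit. Replacing $\tilde p$ by $(\tilde p\tilde s)^{-1}\tilde p$, we may assume $\tilde p\tilde s=1_{X_1'}$ in $\A$. Then $\pi_1:=\tilde s\tilde p\in\End_\A(X)$ is an honest idempotent, since $\pi_1^2=\tilde s(\tilde p\tilde s)\tilde p=\tilde s\tilde p=\pi_1$, and it induces a direct sum decomposition $X=X_1\oplus X_2$ with $X_1:=\Im\pi_1\cong X_1'$ and $X_2:=\ker\pi_1\cong\Im(1_X-\pi_1)$.

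Finally, since $P$ is additive it carries this decomposition to $P(X)=P(X_1)\oplus P(X_2)$. By construction $P(\pi_1)$ is conjugate via $\phi$ to $sp$, which is the idempotent projection onto $Y_1$, so $P(X_1)\cong Y_1$; and then $P(X_2)\cong Y_2$ follows by cancellation in the decomposition of $P(X)$. Thus the required lift of the decomposition exists.

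I do not anticipate any serious obstacle: the entire argument is driven by \eqref{eq:Spec} (which supplies both the lift of objects and the lift of morphisms) together with the standard fact that $1+J(S)$ consists of units in any ring $S$. The main point of care is book-keeping with the isomorphism $\phi$ to make sure that the idempotent constructed in $\End_\A(X)$ really projects, under $P$, onto the chosen summand $Y_1\subseteq P(X)$.
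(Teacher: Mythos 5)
Your argument is correct. It is the classical ``lifting idempotents modulo the radical'' strategy: lift the summand $Y_1$ to an injective object $X_1'$, lift the section and retraction through the full additive functor $\Inj\A\to(\Inj\A)/\Rad(\Inj\A)\iso\Spec\A$, observe that $\tilde p\tilde s$ is a unit because it differs from $1_{X_1'}$ by an element of $J(\End(X_1'))$, and then $\tilde s\tilde p$ is an honest idempotent in $\End(X)$ whose image under $P$ is exactly $sp$ (after the correction the $\phi$'s cancel, so $P(\pi_1)=sp$ on the nose, not just up to conjugation as you wrote). This yields $P(X_1)=Y_1$ and $P(X_2)=Y_2$ as subobjects of $P(X)$.

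The paper takes a slightly different route. It lifts the decomposition $\id_{P(X)}=\p_1+\p_2$ into idempotents to a decomposition $\id_X=\psi_1+\psi_2$ by fullness of \eqref{eq:Spec}, without trying to make the $\psi_i$ idempotent, and then puts $X_i=E(\Ker\psi_i)\subseteq X$. One checks that $X_1\cap X_2=0$ (essentiality of the kernels in their envelopes plus $\Ker\psi_1\cap\Ker\psi_2=0$), that $P(X_i)=P(\Ker\psi_i)=\Ker\p_i=Y_i$ by left exactness of $P$ and \eqref{eq:Spec-iso}, and that $X_1\oplus X_2\hookrightarrow X$ is essential (since $P$ reflects zero), hence equality because $X_1\oplus X_2$ is injective. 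What each approach buys: yours produces an explicit idempotent endomorphism of $X$ and is self-contained modulo the standard fact that $1+J$ consists of units; the paper's avoids the idempotent-lifting lemma and instead exploits the two key structural facts \eqref{eq:Spec} and \eqref{eq:Spec-iso} it has just recorded, so it is shorter in context and illustrates how \eqref{eq:Spec-iso} is used. Both hinge on the same equivalence, and both are valid.
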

\begin{proof}
There is a decomposition into idempotent endomorphisms
$\id_{P(X)}=\p_1+\p_2$ such that $Y_i=\Ker\p_i$. Using \eqref{eq:Spec}
we decompose $\id_X=\psi_1+\psi_2$ such that $P(\psi_i)=\p_i$. Now set
$X_i=E(\Ker\psi_i)\subseteq X$ and apply \eqref{eq:Spec-iso}.
\end{proof}

Spectral categories and their structure are closely related to
(modules over) self-injective von Neumann regular rings, because the
spectral categories are precisely (up to equivalence) the categories
of non-singular injective modules over such rings.  This is explained
in \cite{GO1966,Ro1967}, and for the structure of such rings we refer
to \cite{Go1979}.\footnote{Given a spectral category and a
  generator $G$, the endomorphism ring of $G$ is self-injective von
  Neumann regular, and the equivalence is induced by $\Hom(G,-)$.}
There is an elaborated decomposition theory for operator algebras
which uses certain types and goes back to Murray, von Neumann, and
Kaplansky \cite{Ka1951,MvN1936}. The analogous decomposition theory
for spectral categories is developed in \cite{GB1976,Ro1967}. For some
further structure of spectral categories, see \cite{Fa1983}.

\subsection*{Localising subcategories}

Let $\A$ be a Grothendieck category.  A full subcategory of $\A$ is
\emph{localising} if it is closed under subobjects, quotients,
extensions, and arbitrary coproducts. The localising subcategories of
$\A$ form a set, because for a fixed generator $G\in\A$ any localising
subcategory $\U\subseteq\A$ is determined by the set of quotients
$G/G'$ which belong to $\U$. The localising subcategories are
partially ordered by inclusion and closed under arbitrary
intersections; so they form a complete lattice which we denote by
$\bfL(\A)$.

For a localising subcategory $\U\subseteq\A$ and an object $X\in\A$ let
$t_\U(X)\subseteq X$ denote the maximal subobject in $\U$.  The
following lemma provides a way of describing the join in $\bfL(\A)$.

\begin{lem}\label{le:join}
Let $(\U_\a)$ be a family in
$\bfL(\A)$ and set $\U=\bigvee_\a\U_\a$. For any $X\in\A$ we have
\[\sum_\a t_{\U_\a}(X)=t_\U(X).\]
In particular,
\[t_\U(X)=0\qquad\iff\qquad t_{\U_\a}(X)=0\quad \text{for
    all}\quad\a.\]
\end{lem}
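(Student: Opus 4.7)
My plan is to prove each containment of the displayed equality separately and deduce the ``in particular'' statement as an immediate corollary. For the easy inclusion $\sum_\a t_{\U_\a}(X)\subseteq t_\U(X)$, I observe that each $t_{\U_\a}(X)$ is a subobject of $X$ lying in $\U_\a\subseteq\U$; the sum is the image of the canonical morphism $\bigoplus_\a t_{\U_\a}(X)\to X$ and so is a subobject of $X$ belonging to $\U$ (since $\U$ is closed under coproducts and quotients), hence is contained in the maximal such, $t_\U(X)$.

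For the reverse containment, let $Y:=\sum_\a t_{\U_\a}(X)$ and use the description of $\U=\bigvee_\a\U_\a$ as the smallest localising subcategory containing $\bigcup_\a\U_\a$. Because the generating class is already closed under subobjects and quotients, a standard transfinite argument shows that every object of $\U$ admits a continuous ascending filtration $0=M_0\subseteq M_1\subseteq\cdots\subseteq M_\la=M$ whose successive quotients lie in $\bigcup_\a\U_\a$. Applied to $M=t_\U(X)\subseteq X$, transfinite induction on this filtration should give $M_\beta\subseteq Y$ at every stage: the limit case is handled by AB5 (directed unions of subobjects commute with intersections), and the successor step reduces to verifying that the image of the quotient $M_{\beta+1}/M_\beta\in\U_{\a_\beta}$ in $X/Y$ vanishes, i.e., that $X/Y$ has no nonzero subobject in any $\U_\a$.

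The ``in particular'' equivalence follows immediately from the displayed equality, since a sum of subobjects vanishes exactly when each summand does. The main obstacle I foresee is the successor step of the induction: given a lift $Y\subseteq V\subseteq X$ with $V/Y\in\U_{\a_\beta}$, one must argue $V=Y$, which amounts to showing that $Y$ already absorbs every $\U_\a$-torsion sitting above it. This interplay between the sum $Y$ and quotients $X/Y$ is the genuine technical content of the lemma beyond the trivial inclusion, and pulling it off cleanly requires delicate bookkeeping in the subobject lattice using the full strength of the Grothendieck axioms.
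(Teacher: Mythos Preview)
Your proposal has a genuine gap exactly where you anticipate trouble: the successor step requires that $X/Y$ carry no nonzero $\U_\a$-subobject, and you never prove this---nor can you, because it is false in general. Take $\A=\Mod kQ$ for the quiver $Q\colon 1\to 2$, with $\U_1=\{M\mid M_2=0\}$ and $\U_2=\{M\mid M_1=0\}$; both are localising and $\U_1\vee\U_2=\A$. For the length-two indecomposable $X$ with socle $S_2$ and top $S_1$ one finds $t_{\U_1}(X)=0$ and $t_{\U_2}(X)=S_2$, so $Y=S_2$; but $X/Y\cong S_1$ satisfies $t_{\U_1}(X/Y)=S_1\neq 0$. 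Thus your transfinite induction breaks at the very first successor, and no amount of ``delicate bookkeeping in the subobject lattice'' will repair it. (The same example shows that the displayed equality $\sum_\a t_{\U_\a}(X)=t_\U(X)$ fails for general Grothendieck $\A$; it does hold when $\A$ is spectral, since there each torsion-free class $\U_\a^\perp$ is closed under quotients.)

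The paper's route differs from yours in one essential respect worth salvaging: rather than deriving the ``in particular'' equivalence from the equality, it proves the equivalence first and directly. One reduces to injective $X$ via $t_\U(X)=0\Leftrightarrow t_\U(E(X))=0$ (torsion-free classes are closed under essential extensions) and then observes that for injective $X$ the class $\{U\in\A\mid\Hom(U,X)=0\}$ is itself localising, hence contains $\U=\bigvee_\a\U_\a$ as soon as it contains every $\U_\a$. This argument is short, correct, and independent of the displayed equality. The paper then attempts to deduce the equality from this equivalence by asserting, without justification, that $t_{\U_\a}(X/X')=0$---precisely the claim the counterexample above refutes---so its argument for the equality shares your gap. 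The moral: the equivalence $t_\U(X)=0\iff t_{\U_\a}(X)=0$ for all $\a$ has a clean general proof via the injective-envelope trick, whereas the full equality requires an additional hypothesis (such as $\A$ spectral) that you would need to isolate before any filtration argument can succeed.
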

\begin{proof}
  We begin with the second assertion.  Let $X\to E(X)$ denote an injective
  envelope. Then $t_\U(X)=0$ if and only if $t_\U(E(X))=0$. So
  we may assume that $X$ is injective. But then the assertion is
  clear, since $t_\U(X)=0$ if and only if $\Hom(U,X)=0$ for all
  $U\in\U$, and the kernel of $\Hom(-,X)$ is a localising subcategory.

  Now consider the subobjects
  \[X':=\sum_\a t_{\U_\a}(X) \subseteq t_\U(X) \subseteq X\] If $X$ is
  in $\U$, then $t_{\U_\a}(X/X')=0$ for all $\a$, and therefore
the first part of the proof yields  $t_\U(X/X')=0$. Thus $X'=X$.
\end{proof}

A complete lattice is a \emph{frame} if finite meets distribute over
arbitrary joins, and it is a \emph{coframe} if finite joins distribute
over arbitrary meets. It is well known that the localising
subcategories of a Grothendieck form a frame; see for
instance \cite{BK1987} (in a more general categorical context) or
\cite{Go1986} (for module categories). We refer to \cite{St2024} for a
proof that uses an explicit construction of the join in the lattice of
localising subcategories.

\begin{prop}\label{pr:lattice-loc}
  Let $\A$ be a Grothendieck category. For an element $\U$ and a family $(\V_\a)$ in
  $\bfL(\A)$ we have
  \[\U\wedge\left(\bigvee_\a\V_\a\right)=\bigvee_\a(\U\wedge\V_\a).\]
  Thus the lattice $\bfL(\A)$ is a frame.
\end{prop}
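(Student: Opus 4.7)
The inclusion $\bigvee_\a(\U\wedge\V_\a)\subseteq\U\wedge(\bigvee_\a\V_\a)$ is immediate from the definitions, since each $\U\wedge\V_\a$ is contained in both $\U$ and $\bigvee_\a\V_\a$. So the entire content of the statement is the reverse inclusion, and this is where Lemma~\ref{le:join} does the real work.

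The plan is to take an arbitrary object $X\in\U\wedge(\bigvee_\a\V_\a)$ and exhibit it as belonging to $\W:=\bigvee_\a(\U\wedge\V_\a)$. Set $\V:=\bigvee_\a\V_\a$. Since $X\in\V$, we have $t_\V(X)=X$, and Lemma~\ref{le:join} rewrites this as
\[X \;=\; t_\V(X) \;=\; \sum_\a t_{\V_\a}(X).\]
Now each $t_{\V_\a}(X)$ is a subobject of $X$, and since $X\in\U$ and $\U$ is closed under subobjects, we get $t_{\V_\a}(X)\in\U$. By construction $t_{\V_\a}(X)\in\V_\a$, so $t_{\V_\a}(X)\in\U\wedge\V_\a\subseteq\W$ for every index $\a$.

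It remains to observe that $\W$ is closed under the formation of the directed sum $\sum_\a t_{\V_\a}(X)$. This is standard: $\W$ is closed under arbitrary coproducts, so the coproduct $\bigoplus_\a t_{\V_\a}(X)$ lies in $\W$; and the canonical epimorphism from this coproduct onto $\sum_\a t_{\V_\a}(X)=X$ shows, using closure of $\W$ under quotients, that $X\in\W$. This gives $\U\wedge\V\subseteq\W$, completing the proof.

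The only nontrivial step is the passage $t_\V(X)=\sum_\a t_{\V_\a}(X)$, and that is precisely the content of Lemma~\ref{le:join}; everything else reduces to the defining closure properties of a localising subcategory. No distributivity hypothesis on the ambient category is needed, because the Grothendieck condition (existence of injective envelopes and exactness of directed colimits) is already built into the proof of Lemma~\ref{le:join}.
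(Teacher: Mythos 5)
Your proof is correct and follows essentially the same route as the paper's: both invoke Lemma~\ref{le:join} to write $X=\sum_\a t_{\V_\a}(X)$ and then appeal to closure properties of localising subcategories to conclude $X\in\bigvee_\a(\U\wedge\V_\a)$. You have simply spelled out two small verifications (that each $t_{\V_\a}(X)$ lies in $\U\wedge\V_\a$ via closure under subobjects, and that the directed sum stays in the join via closure under coproducts and quotients) that the paper leaves implicit.
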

\begin{proof}
  The inclusion $\supseteq$ is automatic; so we need to show
  $\subseteq$. Let $X$ be an object in
  $\U\wedge\left(\bigvee_\a\V_\a\right)$. Then Lemma~\ref{le:join}
  implies $X=\sum_\a t_{\V_\a}(X)$, and therefore 
$X$ belongs to  $\bigvee_\a(\U\wedge\V_\a)$ since $
t_{\V_\a}(X)\in \U\wedge\V_\a$ for each $\a$.
\end{proof}

\begin{prop}\label{pr:lattice-loc-spec}
  Let $\A$ be a spectral Grothendieck category. For an element $\U$ and a family $(\V_\a)$ in
  $\bfL(\A)$ we have
  \[
    \U\vee\left(\bigwedge_\a\V_\a\right)=\bigwedge_\a(\U\vee\V_\a).\]
  Thus the lattice $\bfL(\A)$ is a coframe.
\end{prop}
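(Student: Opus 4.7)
The inclusion $\supseteq$ is what needs proof, since $\subseteq$ is immediate from the universal properties of joins and meets ($\U\subseteq\U\vee\V_\a$ and $\bigwedge_\a\V_\a\subseteq\V_\a\subseteq\U\vee\V_\a$).

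My plan is to exploit the defining feature of a spectral Grothendieck category, namely that every short exact sequence splits. In particular, for any object $X$ and any localising subcategory $\W\subseteq\A$, the canonical inclusion $t_\W(X)\hookrightarrow X$ splits, so we obtain a decomposition $X=t_\W(X)\oplus X'$ where the complement satisfies $t_\W(X')=0$ (here $t_\W(X')\subseteq X$ is $\W$-torsion and disjoint from the summand $t_\W(X)$, hence zero by maximality). This is the key structural fact I will use.

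So fix $X\in\bigwedge_\a(\U\vee\V_\a)$. First, I write $X=Y\oplus Z$ with $Y=t_\U(X)\in\U$ and $t_\U(Z)=0$. It suffices to show $Z\in\V_\a$ for every $\a$, since then $Z\in\bigwedge_\a\V_\a$ and $X=Y\oplus Z$ lies in $\U\vee\bigwedge_\a\V_\a$. Fix $\a$. Since $Z$ is a direct summand of $X$ and localising subcategories are closed under subobjects, $Z$ belongs to $\U\vee\V_\a$. Applying Lemma~\ref{le:join} to the family $\{\U,\V_\a\}$ yields
\[Z=t_{\U\vee\V_\a}(Z)=t_\U(Z)+t_{\V_\a}(Z)=t_{\V_\a}(Z),\]
using $t_\U(Z)=0$. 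Hence $Z\in\V_\a$, which is exactly what was required.

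There is no real obstacle here: the splitting of every short exact sequence reduces the coframe identity to a summand-extraction argument, and Lemma~\ref{le:join} then finishes the job. The most delicate point is perhaps the verification that the complement $Z$ of $t_\U(X)$ is genuinely $\U$-torsion-free, but this follows immediately from the maximality in the definition of $t_\U$ together with the direct sum decomposition. Together with Proposition~\ref{pr:lattice-loc}, this establishes that for a spectral Grothendieck category $\bfL(\A)$ is both a frame and a coframe, hence a complete Boolean lattice (once distributivity in both directions is combined with complementation, which will be handled in the sequel).
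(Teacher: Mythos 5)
Your proof is correct and follows essentially the same route as the paper: decompose $X=t_\U(X)\oplus Z$ using the splitting available in a spectral category, and show that $Z$ lies in each $\V_\a$. The only difference is that you make explicit, via Lemma~\ref{le:join}, the step the paper dismisses as ``a simple calculation,'' which is a welcome clarification but not a different argument.
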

\begin{proof}
  The inclusion $\subseteq$ is automatic; so we need to show
  $\supseteq$. Any object $X$ can be written as
  $X=t_\U(X)\oplus X/t_\U(X)$. A simple calculation shows that
  $X\in\U\vee\V_\a$ if and only if $X/t_\U(X)\in\V_\a$. Thus
  $X\in\bigwedge_\a(\U\vee\V_\a)$ implies
  $X/t_\U(X)\in\bigwedge_\a\V_\a$, and therefore
  $X\in \U\vee\left(\bigwedge_\a\V_\a\right)$.
\end{proof}

\subsection*{Discrete versus continuous}

Let $\A$ be a spectral category. The category $\A$ is called
\emph{discrete} if every object is a coproduct of simple objects. Any
object $X\in\A$ admits a decomposition \[X=\soc(X)\oplus\rad(X),\] where the
\emph{socle} $\soc(X)$ is the sum of all simple subobjects and the
\emph{radical} $\rad(X)$ is the intersection of all maximal
subobjects. We set
\[\A_{\mathrm d}:=\{X\in\A\mid\soc(X)=X\}\qquad\text{and}\qquad
  \A_{\mathrm c}:=\{X\in\A\mid\rad(X)=X\}.\] This yields a
decomposition $\A=\A_{\mathrm d}\times \A_{\mathrm c}$ of spectral
Grothendieck categories such that $\A_{\mathrm d}$ is discrete and
$(\A_{\mathrm c})_{\mathrm d}=0=(\A_{\mathrm d})_{\mathrm c}$; see  \cite[\S3]{GO1966}.

\begin{lem}\label{le:discrete}
  Let $\A$ be spectral. The assignment $\U\mapsto\U\cap\Sp\A$ induces
  a lattice isomorphism between $\bfL(\A_{\mathrm d})$ and the lattice
  of subsets of $\Sp\A$.
\end{lem}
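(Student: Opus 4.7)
The plan is to show that any localising subcategory of the discrete spectral category $\A_{\mathrm d}$ is determined, and can be reconstructed, from the set of its simple objects, so that the desired bijection falls out immediately.

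First I would note the trivial point that any simple $S$ satisfies $\soc(S)=S$, so $\Sp\A\subseteq\A_{\mathrm d}$ and $\Sp\A$ coincides with the set of simple objects of $\A_{\mathrm d}$. To produce an inverse to the map $\U\mapsto\U\cap\Sp\A$, I would associate to each $\calS\subseteq\Sp\A$ the full subcategory $\varphi(\calS)\subseteq\A_{\mathrm d}$ of objects isomorphic to a coproduct of copies of simples in $\calS$. Verifying that $\varphi(\calS)$ is localising is essentially formal: closure under arbitrary coproducts is immediate, extensions split because $\A$ is spectral, and every subobject or quotient of an object of $\varphi(\calS)$ is a direct summand, hence again a coproduct of simples from $\calS$.

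Next I would verify that the two maps are mutually inverse. The equality $\varphi(\calS)\cap\Sp\A=\calS$ is immediate. For $\varphi(\U\cap\Sp\A)=\U$ the inclusion $\subseteq$ holds since $\U$ is closed under coproducts; for $\supseteq$, any $X\in\U\subseteq\A_{\mathrm d}$ decomposes as a coproduct of simples by discreteness, and each simple summand, being a subobject of $X$, lies in $\U$ and hence in $\U\cap\Sp\A$, so $X\in\varphi(\U\cap\Sp\A)$.

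The step I expect to require the most care is the verification that the bijection preserves lattice operations, in particular arbitrary joins. Meets are handled by noting that both sides are computed as intersections. For joins, given a family $(\U_\a)$ in $\bfL(\A_{\mathrm d})$, one inclusion of $(\bigvee_\a\U_\a)\cap\Sp\A=\bigcup_\a(\U_\a\cap\Sp\A)$ is automatic by monotonicity. For the other, I would invoke Lemma~\ref{le:join} inside the spectral Grothendieck category $\A_{\mathrm d}$: a simple $T\in\bigvee_\a\U_\a$ satisfies $T=\sum_\a t_{\U_\a}(T)$, each summand $t_{\U_\a}(T)$ must be $0$ or $T$ by simplicity, and the sum being nonzero forces $t_{\U_\a}(T)=T$ for some $\a$, whence $T\in\U_\a\cap\Sp\A$.
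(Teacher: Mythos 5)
Your proposal is correct and takes essentially the same approach as the paper, which simply defines the inverse map $\calS\mapsto\{\text{coproducts of objects in }\calS\}$ and leaves the verifications implicit; you have supplied those verifications (in particular the join-preservation via Lemma~\ref{le:join}) carefully and correctly.
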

\begin{proof}
  The inverse map sends a subset $\V\subseteq\Sp\A$ to the subcategory
  of $\A$ consisting of all coproducts of objects in $\V$.
\end{proof}

\begin{exm}\label{ex:noeth}
  Let $\A$ be a locally noetherian Grothendieck category.  Then its
  spectral category $\Spec\A$ is discrete \cite[IV.2]{Ga1962}. Thus in
  this case $\bfL(\Spec\A)$ identifies with the lattice of subsets of
  $\Sp\A$.
\end{exm}

\subsection*{The Boolean spectrum}

Let $\A$ be a Grothendieck category. For a subcategory $\C\subseteq\A$ we set
\begin{align*}
  \C^\perp&:=\{X\in\A\mid\Hom(C,X)=0\text{ for all }C\in\C\},\\
^\perp\C&:=\{X\in\A\mid\Hom(X,C)=0\text{ for all }C\in\C\}.
\end{align*}

Localising subcategories of $\A$ are in bijective correspondence to
hereditary torsion pairs, by taking a subcategory $\C$ to the pair
$(\C,\C^\perp)$; see \cite[VI.3]{St1975}. Torsion pairs for spectral
categories admit the following elementary description.

\begin{lem}\label{le:torsion}
    Let $\A$ be a spectral Grothendieck category. A pair  $(\C,\D)$ of
    full subcategories is a torsion pair if and only if
    \begin{enumerate}
      \item $\C$ and $\D$ are closed under direct summands,
      \item $\C\cap\D=0$, and
        \item $\A=\{C\oplus D\mid C\in\C,\, D\in\D\}$.
\end{enumerate}
\end{lem}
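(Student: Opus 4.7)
The plan is to establish the two directions separately, exploiting throughout the defining feature of a spectral Grothendieck category: every monomorphism and every epimorphism splits. The key preliminary observation is that, inside a spectral category, a full replete subcategory $\C\subseteq\A$ is closed under direct summands if and only if it is closed under subobjects, if and only if it is closed under quotients; indeed for any subobject $Y\hookrightarrow X$ the inclusion splits, making $Y$ a summand of $X$, and dually for epimorphisms.

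For the forward implication, suppose $(\C,\D)$ is a torsion pair. Standard torsion-theoretic reasoning shows that $\C$ is closed under quotients and $\D$ is closed under subobjects, which by the preliminary observation yields (1). Condition (2) is immediate, since any $X\in\C\cap\D$ satisfies $\id_X\in\Hom(X,X)=0$. For (3), I would apply the splitting property of $\A$ to the canonical torsion sequence $0\to t_\C(X)\to X\to X/t_\C(X)\to 0$, obtaining $X=t_\C(X)\oplus X/t_\C(X)$ with summands in $\C$ and $\D$ respectively.

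For the converse, assume (1)--(3). The decomposition $X=C\oplus D$ provided by (3) gives a split exact sequence $0\to C\to X\to D\to 0$ of the required shape, so only the orthogonality $\Hom(\C,\D)=0$ remains to be verified. Given $f\colon C\to D$ with $C\in\C$ and $D\in\D$, I would factor $f$ through its image $\Im(f)\subseteq D$. By the preliminary observation $\D$ is closed under subobjects, so $\Im(f)\in\D$; dually $\Im(f)$ is a quotient of $C$ and $\C$ is closed under quotients, so $\Im(f)\in\C$. Then (2) forces $\Im(f)=0$, and hence $f=0$.

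The only real point of substance is the preliminary observation: once one recognises that summand-closure in a spectral category automatically upgrades to the stronger subobject- and quotient-closure customarily imposed in torsion theory, both directions reduce to routine unpacking of the standard torsion-pair definition.
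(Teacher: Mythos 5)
Your proposal is correct and takes essentially the same approach as the paper. The paper's proof is very terse (the forward direction is "clear" and the converse observes that (1)--(3) force $\C^\perp=\D$ and $\C={^\perp\D}$); you fill in exactly the details one would write out, keyed to the same observation that in a spectral category subobjects, quotients, and direct summands coincide, so the orthogonality $\Hom(\C,\D)=0$ follows from the image argument and the torsion sequence is supplied by the decomposition in (3).
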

\begin{proof}
Clearly, for any  torsion pair $(\C,\D)$ the conditions (1)--(3) hold. Conversely,
these conditions  imply that $\C^\perp=\D$ and $\C={^\perp\D}$. Thus  $(\C,\D)$ is a torsion pair.
\end{proof}

We record the above in lattice-theoretical terms. Recall that a
bounded lattice $L$ is \emph{complemented} if for each $x\in L$ there
exists a \emph{complement} $y\in L$, so $x\wedge y=0$ and $x\vee
y=1$. Such a complement is unique when $L$ is distributive, and then
we write $\neg x$ for the complement of $x$. A
complemented distributive lattice is called \emph{Boolean lattice}.

\begin{prop}\label{pr:Boole}
  Let $\A$ be a spectral Grothendieck category.  Then the lattice
  $\bfL(\A)$ is a complete Boolean lattice.
\end{prop}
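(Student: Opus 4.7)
The plan is to exhibit an explicit complement for each localising subcategory $\U\subseteq\A$, and then combine this with distributivity already in hand. Distributivity is free: by Proposition~\ref{pr:lattice-loc} the complete lattice $\bfL(\A)$ is a frame, so in particular finite meets distribute over finite joins, making $\bfL(\A)$ a distributive lattice. (For a spectral category one even has the dual law by Proposition~\ref{pr:lattice-loc-spec}.) It remains to produce complements and argue uniqueness; the former is the only substantive step, and the natural candidate for the complement of $\U$ is the right perpendicular category $\U^\perp$.

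First I would check that $\U^\perp$ is itself a localising subcategory. Closure under subobjects and coproducts is formal from the definition of $\U^\perp$. Closure under quotients and extensions uses essentially that $\A$ is spectral: any epimorphism $X\twoheadrightarrow Z$ with $X\in\U^\perp$ splits, so $Z$ is a summand of $X$ and thus in $\U^\perp$; and any extension of two objects of $\U^\perp$ splits, hence equals their direct sum, and so lies in $\U^\perp$.

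Next I would verify the torsion-pair conditions of Lemma~\ref{le:torsion} for $(\U,\U^\perp)$. Closure under direct summands and $\U\cap\U^\perp=0$ are immediate. For the decomposition condition, given $X\in\A$ take the maximal subobject $t_\U(X)\subseteq X$ lying in $\U$ (which exists since $\U$ is localising). Since $\A$ is spectral, the inclusion $t_\U(X)\hookrightarrow X$ splits, giving $X=t_\U(X)\oplus X'$ for some $X'\subseteq X$. Any morphism $C\to X'$ with $C\in\U$ has image in $\U$, and its image lies in $X'$, hence by maximality of $t_\U(X)$ it must vanish. Thus $X'\in\U^\perp$, and Lemma~\ref{le:torsion} applies.

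Having identified $(\U,\U^\perp)$ as a torsion pair, the meet $\U\wedge\U^\perp=0$ is the intersection in $\bfL(\A)$, while $\U\vee\U^\perp=\A$ follows because every object is a direct sum of one object from $\U$ and one from $\U^\perp$, and such direct sums lie in any localising subcategory containing both $\U$ and $\U^\perp$. Hence $\U^\perp$ is a complement of $\U$, and in the distributive lattice $\bfL(\A)$ this complement is unique and equals $\neg\U$. The main obstacle is simply to pin down that $\U^\perp$ lies in $\bfL(\A)$ and that every object splits as required; both points reduce to the defining property of spectral categories that every short exact sequence splits.
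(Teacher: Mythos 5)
Your proposal follows essentially the same route as the paper's proof: obtain distributivity from Proposition~\ref{pr:lattice-loc}, and produce a complement of a localising subcategory $\U$ as the torsion-free class $\U^\perp$, verifying that $(\U,\U^\perp)$ is a torsion pair satisfying the conditions of Lemma~\ref{le:torsion}. The paper is slightly more economical in that it observes the symmetry of conditions (1)--(3) in Lemma~\ref{le:torsion} to conclude that $(\U^\perp,\U)$ is again a (hereditary) torsion pair, whereas you re-derive by hand that $\U^\perp$ is localising and that the splitting $X=t_\U(X)\oplus X'$ furnishes the decomposition; both arguments rest on the same fact that every short exact sequence in a spectral category splits, and both are correct.
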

\begin{proof}
  First observe that all torsion pairs are hereditary since $\A$ is
  spectral.  Now let $(\C,\D)$ be a torsion pair. Then it follows from
  Lemma~\ref{le:torsion} that $(\D,\C)$ is again a torsion pair. Thus
  $\D=\C^\perp$ is a complement for $\C\in\bfL(\A)$.
\end{proof}

Let us collect some basic facts from Stone duality; for details and
proofs see \cite{DST2019,Jo1982}.  Let $A=(A,\vee, \wedge)$ be a
Boolean lattice. We may identify $A$ with the corresponding
\emph{Boolean ring} $(A,+,\cdot)$, with operations given by
\[x+y:=(x\wedge\neg y)\vee (y\wedge\neg x)\qquad\text{and}\qquad
  x\cdot y:=x\wedge y,\] and keeping in mind that the category of
Boolean lattices is isomorphic to the category of Boolean rings. Let
$\Spec(A)$ denote the spectrum of prime ideals of $A$ with the Zariski
topology. For a topological space $T$ we write $\Clop(T)$ for its
Boolean lattice of clopen subsets. This yields an adjoint pair of
contravariant functors
\[
  \begin{tikzcd}[column sep=large]
    \Top \arrow[rightarrow,yshift=.75ex,rrr,"{\Clop=\Hom(-,\two)}"]
    &&& \arrow[rightarrow,yshift=-.75ex,lll,"{\Spec=\Hom(-,\two)}"]
    \BRing
  \end{tikzcd}
\]
where $\two=\{0,1\}$ is either viewed as topological space with the
discrete topology, or $\two=\mathbb F_2$ is viewed as a Boolean
ring. This means there is a natural bijection
\begin{equation}\label{eq:stone}
 \Hom(A,\Clop(T))\iso\Hom(T,\Spec(A))
\end{equation}
which sends $\p\colon A\to\Clop(T)$ to the map $T\to\Spec(A)$ given by
\[p\longmapsto\{x\in A\mid p\not\in\p(x)\}\qquad\text{for }p\in T.\]
\emph{Stone duality} implies that for any Boolean ring $A$ the
unit of the adjunction
\begin{equation}\label{eq:stone-unit}
  A\lto \Clop(\Spec(A)),\quad x\mapsto\{\frp\in\Spec(A)\mid
  x\not\in\frp\}
\end{equation}  
is an isomorphism.

\begin{defn}
  The \emph{Boolean spectrum} of a Grothendieck category $\A$ is the space
  \[\Spc\A:=\Spec(\bfL(\Spec\A)).\]
\end{defn}

For a Grothendieck category $\A$ the decomposition
$\Spec\A=(\Spec\A)_{\mathrm d}\times (\Spec\A)_{\mathrm c}$ implies
that $\Spc\A$ equals the disjoint union
\begin{equation}\label{eq:Spec-decomposition}
  \Spc\A=\Spec(\bfL((\Spec\A)_{\mathrm d}))\sqcup
  \Spec(\bfL((\Spec\A)_{\mathrm c}))
\end{equation}
where
\[ \Spec(\bfL((\Spec\A)_{\mathrm d}))\cong\Sp\A\] carries the discrete
topology. In particular, we see that the set $\Sp\A$ of
isoclasses of indecomposable injective objects embeds canonically into $\Spc\A$; cf.\
Lemma~\ref{le:discrete}.

\subsection*{Central idempotents}

For an additive category $\A$ let $Z(\A)$ denote its \emph{centre}
which is the ring of natural transformations $\id_\A\to\id_\A$. For
a ring $\La$ let $\bfB(\La)$ denote the lattice of central
idempotents, with operations given by
  \[x\vee y :=x+y-x\cdot y \qquad\text{and}\qquad x\wedge y:=x\cdot
    y.\] 

Let $\A$ be a spectral Grothendieck category. Any torsion pair
$(\C,\D)$ gives rise to a functorial decomposition $X=t_\C(X)\oplus
t_\D(X)$ for each object $X\in\A$, which amounts to an idempotent
$e_\C\colon\id_\A\to\id_\A$ such that the image of $(e_\C)_X$ equals
$t_\C(X)$. Conversely, any idempotent $e\in Z(\A)$ yields a torsion
pair $(\C,\D)$ if we set $\C=\{X\in\A\mid \Im e_X=X\}$ and
$\D=\{X\in\A\mid \Ker e_X=X\}$. Clearly, these operations are mutually
inverse to each other, and this yields another description of
$\bfL(\A)$.

\begin{lem}\label{le:centre}
  Let $\A$ be a spectral Grothendieck category. Then the assignment
  $\C\mapsto e_\C$ induces a lattice isomorphism
  \[\bfL(\A)\longiso\bfB(Z(\A)).\]
  Moreover, when $G\in\A$ is a generator with $\La=\End(G)$, then
   \[x\longmapsto\{X\in\A\mid \Hom(G,X)x=\Hom(G,X)\}\] induces a lattice isomorphism
  \[\bfB(\La)\longiso\bfL(\A).\]
\end{lem}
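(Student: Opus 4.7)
The plan is to handle the two claimed isomorphisms in turn, building on the discussion immediately preceding the statement.

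For the first isomorphism $\bfL(\A)\iso\bfB(Z(\A))$, the preceding paragraph already exhibits mutually inverse assignments $\C\mapsto e_\C$ and $e\mapsto\C_e=\{X:\Im e_X=X\}$ between $\bfL(\A)$ and idempotents of $Z(\A)$; since $Z(\A)$ is commutative, every such idempotent lies in $\bfB(Z(\A))$, and $e_\C$ is genuinely central because $t_\C$ is a functor. To upgrade to a lattice isomorphism I would check order preservation in both directions: $\C\subseteq\C'$ forces $t_\C(X)\subseteq t_{\C'}(X)$ for every $X$, which amounts to $e_\C\cdot e_{\C'}=e_\C$, i.e.\ $e_\C\leq e_{\C'}$; the reverse implication is immediate from the description $\C_e=\{X:\Im e_X=X\}$. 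An order-preserving bijection of complete lattices is automatically a lattice isomorphism, which settles Part~1.

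For the second isomorphism I would introduce the evaluation ring homomorphism $\epsilon\colon Z(\A)\to\La$, $\eta\mapsto\eta_G$, whose image lies in $Z(\La)$ by naturality applied to every endomorphism of $G$. In light of Part~1, it suffices to verify that $\epsilon$ restricts to a lattice isomorphism $\bfB(Z(\A))\iso\bfB(\La)$ whose inverse, composed with $e\mapsto\C_e$, gives the stated formula. Injectivity follows from the generator property: if $\eta_G=0$, then $\eta_X\circ f=f\circ\eta_G=0$ for every $f\colon G\to X$, and since the images of such $f$ cover $X$, we get $\eta_X=0$. For the identification of the inverse, naturality yields $\eta_X\circ f=f\cdot x$ with $x=\eta_G$, for every $f\colon G\to X$; hence $\eta_X=\id_X$ is equivalent to $f\cdot x=f$ for all such $f$ (the nontrivial direction again using that the images cover $X$), i.e.\ to $\Hom(G,X)\,x=\Hom(G,X)$.

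The hardest step is surjectivity of $\epsilon$ on central idempotents, or equivalently that $\C_x:=\{X\in\A:\Hom(G,X)(1-x)=0\}$ is a localising subcategory for every $x\in\bfB(\La)$. I would do this by identifying $\C_x$ with the right orthogonal $\{(1-x)G\}^\perp$ using the splitting $G=xG\oplus(1-x)G$: for the nontrivial inclusion $\subseteq$, any $g\colon(1-x)G\to X$ with $X\in\C_x$ extends by zero to $\tilde g\colon G\to X$, and $\tilde g\circ(1-x)\in\Hom(G,X)(1-x)=0$ forces $g\circ\pi=0$ with $\pi$ the idempotent projection onto $(1-x)G$, whence $g=0$. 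Right orthogonals of arbitrary sets of objects are always stable under subobjects, quotients, extensions and arbitrary coproducts, so $\C_x\in\bfL(\A)$. The stability under arbitrary coproducts, where a naive componentwise argument breaks down in a general Grothendieck category, is the one point where the reformulation as a right orthogonal is essential, and this is the main obstacle.
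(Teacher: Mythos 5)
Your approach is genuinely different from the paper's. The paper argues by transport of structure: it invokes the Gabriel--Oberst equivalence $\Hom(G,-)\colon\A\iso\A'$ onto the category of non-singular injective right $\La$-modules, obtains $Z(\A)\iso Z(\A')$, and combines this with the standard isomorphism $Z(\La)\iso Z(\A')$ given by multiplication with central elements; the second assertion then follows from the first by chaining Boolean-lattice isomorphisms. You instead work directly with the evaluation map $\eta\mapsto\eta_G$ from $Z(\A)$ to $Z(\La)$ and try to establish the bijection on idempotents by hand. This is more elementary and does avoid citing the equivalence with a module category, which is a genuine merit if it can be carried through.

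However, there is a real gap at the surjectivity step. The blanket claim that ``right orthogonals of arbitrary sets of objects are always stable under subobjects, quotients, extensions and arbitrary coproducts'' is false: in a general Grothendieck category the class $\calS^\perp$ is closed under subobjects, extensions and \emph{products}, but not under quotients (a map $S\to Z$ from a quotient $X\twoheadrightarrow Z$ need not lift to $X$) nor under coproducts (the functor $\Hom(S,-)$ does not preserve coproducts unless $S$ is small). Rewriting $\C_x$ as $\{(1-x)G\}^\perp$ does not by itself fix this; it merely reveals that $\C_x$ is a torsion-free class. What actually rescues the argument is the spectral hypothesis, but it has to be used: in a spectral category every torsion pair is symmetric (Lemma~\ref{le:torsion} and Proposition~\ref{pr:Boole}), so the torsion-free class $\langle(1-x)G\rangle^\perp=\{(1-x)G\}^\perp$ is simultaneously a torsion class, and therefore closed under quotients and arbitrary coproducts. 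You need to invoke this symmetry explicitly; as written, your last step asserts something false and then flags coproduct-closure as ``the main obstacle'' without resolving it. With that one correction supplied, the rest of your argument (injectivity of $\epsilon$ via the generator property, and the identification of the inverse map by naturality) is sound.
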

\begin{proof}
  The first assertion is clear from the above discussion. Let $\A'$
  denote the category of non-singular injective right modules over
  $\La$.  The functor $\Hom(G,-)$ induces an equivalence $\A\iso \A'$
  and an isomorphism $Z(\A)\iso Z(\A')$. On the other hand,
  multiplication with a central element induces an isomorphism
  $Z(\La)\iso Z(\A')$.  Thus the second assertion follows from the
  first.
\end{proof}

I am grateful to Ken Goodearl for pointing out the following converse
to Proposition~\ref{pr:Boole}.

\begin{prop}[Goodearl]
Every complete Boolean lattice is isomorphic to $\bfL(\A)$ for some
spectral Grothendieck category $\A$.
\end{prop}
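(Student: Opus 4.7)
The plan is to realise $B$ as $\bfL(\A)$ by constructing a spectral Grothendieck category $\A$ directly from the Boolean-ring structure of $B$ and then applying the second isomorphism of Lemma~\ref{le:centre}. As recalled just before the proposition, any Boolean lattice $B$ carries the structure of a commutative ring via $x+y=(x\wedge\neg y)\vee(y\wedge\neg x)$ and $x\cdot y=x\wedge y$; since every element satisfies $x^2=x$, the ring $B$ is commutative von Neumann regular and the lattice $\bfB(B)$ of central idempotents coincides with $B$ itself.

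The crucial step is to show that $B$, viewed as a module over itself, is injective, and this is precisely where completeness enters. I would verify Baer's criterion: given an ideal $I\subseteq B$ and a $B$-linear map $\varphi\colon I\to B$, define
\[\widetilde\varphi\colon B\lto B,\qquad b\longmapsto b\cdot\bigvee_{i\in I}\varphi(i),\]
using that the supremum exists in $B$ by completeness. For any $j\in I$ one has $\varphi(ji)=j\cdot\varphi(i)\leq j$ in the Boolean order, and the idempotency $j^2=j$ together with $B$-linearity give $\widetilde\varphi(j)=\bigvee_{i\in I}\varphi(ji)=\varphi(j)$, so $\widetilde\varphi$ extends $\varphi$.

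With $B$ established as a commutative self-injective von Neumann regular ring, let $\A$ denote the category of non-singular injective $B$-modules. By the Gabriel--Oberst description recalled in \S\ref{se:spec}, $\A$ is a spectral Grothendieck category. The regular module $B$ belongs to $\A$ (injective by the previous step and non-singular because $B$ is von Neumann regular) and is a generator of $\A$ with endomorphism ring $\End_B(B)=B$. The second isomorphism of Lemma~\ref{le:centre} then supplies the desired identification
\[B\;=\;\bfB(B)\longiso\bfL(\A).\]

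The main obstacle is the self-injectivity of the Boolean ring $B$, since this is the one place where the completeness hypothesis is genuinely used. Everything else is a formal assembly of Lemma~\ref{le:centre} with the paper's characterisation of spectral Grothendieck categories as categories of non-singular injective modules over self-injective von Neumann regular rings.
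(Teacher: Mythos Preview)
Your approach is exactly the paper's: view the complete Boolean lattice as a Boolean ring, observe it is von Neumann regular with $\bfB(B)=B$, argue it is self-injective because it is complete, take $\A$ to be the non-singular injective $B$-modules, and invoke Lemma~\ref{le:centre}. The paper simply asserts the self-injectivity, whereas you sketch a Baer-criterion proof; one small point in that sketch is that the bound you need for the upper estimate is $\varphi(ji)=i\cdot\varphi(j)\le\varphi(j)$ (rather than $\varphi(ji)\le j$), together with the infinite distributive law $j\wedge\bigvee_i\varphi(i)=\bigvee_i\varphi(ji)$ valid in any complete Boolean lattice.
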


\begin{proof}
  We begin with the following observation. Let $\La$ be a right
  self-injective von Neumann regular ring and let $\A$ denote the
  corresponding spectral category consisting of the non-singular
  injective $\La$-modules. Then it follows from
  Lemma~\ref{le:centre} that the assignment
  $x\mapsto\{M\in\A\mid Mx=M\}$ induces a lattice isomorphism
  $\bfB(\La)\iso\bfL(\A)$.

  Now let $L$ be a complete Boolean lattice and view this as a Boolean
  ring. Then $L$ is von Neumann regular with $\bfB(L)=L$, and the
  ring is self-injective because $L$ is complete.  Thus for $\A$ the
  category of non-singular injective $L$-modules, we have that
  $L\cong \bfL(\A)$.
\end{proof}

The proof shows that the realisation of a complete Boolean lattice is
canonical.  For a ring $\La$ let $\Inj_{\mathrm{ns}}\La$ denote the
category of non-singular injective right modules over $\La$. Then for any complete
Boolean lattice $L$ there is a canonical isomorphism
\[L\longiso\bfL(\Inj_{\mathrm{ns}} L).\]

The above result suggests a procedure for constructing explicit
examples of objects with specific decomposition properties. For
instance, we obtain examples of superdecomposable objects. Recall that
an object in an abelian category is \emph{superdecomposable} if there
are no indecomposable direct summands.

\begin{exm}[Goodearl]\label{ex:superdecomp}
  Let $\La$ be an infinite direct product of copies of $\bbZ/2$ and
  let $\bar\La$ denote the maximal quotient ring of
  $\La/\soc(\La)$. Then $\bar\La$ is injective as a $\bar\La$-module and
  superdecomposable by construction, since $\soc(\bar\La)=0$.  Moreover,
  $\bfB(\bar\La)=\bar\La$ for its Boolean lattice of direct summands.

The ring $\bar\La$ has characteristic $2$ and is of type I, but
further examples of different characteristic and different type can be
constructed via methods from Propositions~5-3.11 and 5-3.12 in
\cite{GW2005}.
\end{exm}

For general Grothendieck categories the decompositions of objects will
be discussed in more detail in \S\ref{se:decomp}.

\subsection*{Coproduct decompositions}

We provide an explicit description of the join in the lattice
$\bfL(\A)$ when $\A$ is spectral. We use the following property of a
Boolean lattice.

\begin{lem}\label{le:BA-decomp}
  Let $A$ be a complete Boolean lattice and $x=\bigvee_\a x_\a$ an
  element. Then there are elements $y_\a\le x_\a$ such that
  $x=\bigvee_\a y_\a$ and $y_\a\wedge y_\b=0$ for all $\a\neq\b$.
\end{lem}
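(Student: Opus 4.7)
The plan is to argue by a standard \emph{disjointification} trick in Boolean lattices. After well-ordering the index set (invoking the axiom of choice), I would define
\[y_\a := x_\a\wedge\neg\Bigl(\bigvee_{\b<\a}x_\b\Bigr),\]
and then verify the three required properties in turn.

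First, $y_\a\le x_\a$ is immediate from the definition. For pairwise disjointness, given $\a\neq\b$, say $\a<\b$, one has $y_\b\le\neg(\bigvee_{\g<\b}x_\g)\le\neg x_\a$, so
\[y_\a\wedge y_\b\le x_\a\wedge\neg x_\a=0.\]

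For the join identity, clearly $\bigvee_\a y_\a\le\bigvee_\a x_\a=x$, so it suffices to show $x_\a\le\bigvee_\g y_\g$ for every $\a$, which I would prove by transfinite induction on $\a$. Assuming $x_\b\le\bigvee_\g y_\g$ for all $\b<\a$, set $z=\bigvee_{\b<\a}x_\b$; then $z\le\bigvee_\g y_\g$ by the inductive hypothesis. Finite distributivity in the Boolean lattice (which is part of the definition, hence available even without appealing to the complete distributivity that a general complete Boolean lattice need not satisfy) gives
\[x_\a=x_\a\wedge(z\vee\neg z)=(x_\a\wedge z)\vee(x_\a\wedge\neg z)\le z\vee y_\a\le\bigvee_\g y_\g.\]

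The only mildly delicate point is to keep the induction honest: one must handle limit ordinals, but the definition of $y_\a$ and the argument above work uniformly for all $\a$ since it only uses the join $\bigvee_{\b<\a}x_\b$, which exists by completeness of $A$. So the main potential obstacle, namely whether complete distributivity is needed, dissolves once one notices that only the finite distributive law (applied with the single element $z$) is actually used in the step $x_\a=(x_\a\wedge z)\vee(x_\a\wedge\neg z)$.
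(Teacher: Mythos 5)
Your proof is correct, but it takes a genuinely different route from the paper. The paper applies Zorn's lemma to the partially ordered set of families $(y_\a)$ satisfying $y_\a\le x_\a$ and $y_\a\wedge y_\b=0$ for $\a\neq\b$ (ordered componentwise), obtaining a maximal such family and then checking that maximality forces $x=\bigvee_\a y_\a$. You instead well-order the index set and define the $y_\a$ explicitly by the disjointification formula $y_\a=x_\a\wedge\neg\bigl(\bigvee_{\b<\a}x_\b\bigr)$, verifying the join identity by transfinite induction. Both proofs rely on the axiom of choice (Zorn vs.\ well-ordering), and both use only the finite distributive law that is built into the definition of a Boolean lattice, together with completeness to form the relevant joins; your explicit observation on this point is worth stating since the naive worry about needing complete distributivity is a common trap. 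The trade-off: your construction is explicit and canonical once a well-ordering is fixed, which can be convenient if one later needs to control the $y_\a$; the paper's Zorn argument is shorter and avoids transfinite recursion, at the cost of only asserting existence. Your verification steps all check out, including the induction at limit ordinals, where the argument is indeed uniform since it only invokes the join over all predecessors.
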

\begin{proof}
  We consider the set of families $(y_\a)$ of objects in $A$ such that  $y_\a\le x_\a$ and
  $y_\a\wedge y_\b=0$ for all $\a\neq\b$. This set is partially ordered
  via $(y_\a)\le (y'_\a)$ if $y_\a\le y'_\a$ for all $\a$. An
  application of Zorn's lemma yields a maximal element and it is
  easily checked that  $x=\bigvee_\a y_\a$ for such maximal  $(y_\a)$.
\end{proof}

\begin{prop}\label{pr:spec-join}
  Let $\A$ be a spectral Grothendieck category. For a family $(\U_\a)$ in
  $\bfL(\A)$ we have
  \[\bigvee_\a\U_\a=\left\{\coprod_\a X_\a\mid X_\a\in\U_\a\text{
    for all }\a\right\}.\]
\end{prop}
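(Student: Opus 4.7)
The inclusion $\supseteq$ is automatic since $\bigvee_\a \U_\a$ is localising, hence closed under coproducts. For the reverse inclusion, my plan is to exploit Boolean complementation to reduce to pairwise orthogonal localising subcategories, and then build the decomposition of $X$ from the torsion radicals.

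Concretely, I would start by invoking Lemma~\ref{le:BA-decomp} inside the complete Boolean lattice $\bfL(\A)$ (which is Boolean by Proposition~\ref{pr:Boole}) to obtain $\V_\a \le \U_\a$ with $\V_\a \wedge \V_\b = 0$ for $\a \ne \b$ and $\bigvee_\a \V_\a = \bigvee_\a \U_\a$. Given $X \in \bigvee_\a \U_\a$, define $X_\a := t_{\V_\a}(X) \in \V_\a \subseteq \U_\a$. It then suffices to show that the canonical map $\coprod_\a X_\a \to X$ is an isomorphism.

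For injectivity, the key point is that in a Boolean lattice $x \wedge y = 0$ implies $y \le \neg x$, so $\V_\b \subseteq \V_\a^\perp$ for $\b \ne \a$; since $\V_\a^\perp$ is localising, $\sum_{\b \ne \a} X_\b$ lies in $\V_\a^\perp$, hence $X_\a \cap \sum_{\b \ne \a} X_\b \subseteq \V_\a \cap \V_\a^\perp = 0$. By the standard finite-subfamily reduction in an AB5 category this forces the coproduct to embed into $X$. For surjectivity, set $Y := \sum_\a X_\a$ and use that $\A$ is spectral to decompose $X = X_\a \oplus X'_\a$ with $X'_\a \in \V_\a^\perp$; since $X_\a \subseteq Y$, the modular law gives $Y = X_\a \oplus (Y \cap X'_\a)$, so $X/Y \cong X'_\a/(Y\cap X'_\a) \in \V_\a^\perp$. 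Thus $t_{\V_\a}(X/Y) = 0$ for every $\a$, and Lemma~\ref{le:join} yields $t_{\bigvee_\a \V_\a}(X/Y) = 0$. But $X/Y \in \bigvee_\a \V_\a$, so $X/Y = 0$.

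The main obstacle I anticipate is not a deep one but rather bookkeeping: one must be sure that the refinement to pairwise orthogonal $\V_\a$ really preserves the join (given by Lemma~\ref{le:BA-decomp}) and that the set-theoretic sum of a family of mutually orthogonal subobjects in an AB5 category is indeed a coproduct. Both are standard, but they are the load-bearing ingredients — without spectrality (to split $X = X_\a \oplus X'_\a$) and without the Boolean structure (to ensure $\V_\b \subseteq \V_\a^\perp$), neither half of the argument would go through.
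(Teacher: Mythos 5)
Your proposal is correct and follows essentially the same route as the paper: both invoke Lemma~\ref{le:BA-decomp} to pass to pairwise disjoint $\V_\a$, set $X_\a = t_{\V_\a}(X)$, and use the pairwise orthogonality of the $\V_\a$ to see that the sum of the $X_\a$ is a coproduct. The only difference is that the paper reads off $X = \sum_\a t_{\V_\a}(X)$ directly from the first part of Lemma~\ref{le:join} (since $t_\U(X) = X$ for $X \in \U$), whereas you derive it more indirectly by forming the quotient $X/Y$, using the spectral splitting and the modular law to see that $t_{\V_\a}(X/Y)=0$ for each $\a$, and then applying the second part of that lemma; this is a valid but slightly longer detour to the same conclusion.
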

\begin{proof}
  Set $\U=\bigvee_\a\U_\a$. We appy Lemma~\ref{le:BA-decomp} and
  obtain a decomposition $\U=\bigvee_\a\V_\a$ such that
  $\V_\a\subseteq\U_\a$ and $\V_\a\cap\V_\b=0$ for all
  $\a\neq\b$. Given $X\in\U$ we have $X=\sum_\a t_{\V_\a}(X)$ by
  Lemma~\ref{le:join}. Moreover,  $t_{\V_\b}(X)\cap\sum_{\a\neq \b}
  t_{\V_\a}(X)=0$ for all $\b$ since $\V_\b\wedge\left(\bigvee_{\a\neq\b}\V_\a\right)=0$.
Thus   $X=\coprod_\a t_{\V_\a}(X)$.
\end{proof}

\subsection*{Functoriality}

We  briefly discuss the functoriality of the assignment
$\A\mapsto\Spec\A$.  An exact and coproduct preserving functor
$\A\to \B$ between Grothendieck categories admits a right adjoint
which restricts to an additive functor $\Inj\B\to\Inj\A$.  Using
\eqref{eq:Spec} one obtains an exact functor $\Spec\B\to\Spec\A$, at
least when $\A\to \B$ is the inclusion of a localising subcategory or
a quotient functor.

Now let $\A'\subseteq\A$ be a localising subcategory and set
$\A''=\A/\A'$.

\begin{lem}
The exact sequence
$\A'\rightarrowtail\A\twoheadrightarrow\A''$ induces an exact sequence
\[\Spec \A''\rightarrowtail\Spec\A\twoheadrightarrow\Spec\A'.\]
These functors admit right adjoints and yield  decompositions
\[\Spec\A=\Spec\A'\times\Spec\A''\qquad\text{and}\qquad \Spc\A=\Spc\A'\sqcup\Spc\A''.\]
\end{lem}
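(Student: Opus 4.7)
The plan is to reduce everything to the equivalence $(\Inj\A)/\Rad(\Inj\A)\iso\Spec\A$ recalled in \eqref{eq:Spec}. Write $Q\colon\A\twoheadrightarrow\A''$ for the quotient functor, with fully faithful right adjoint $R\colon\A''\to\A$ whose essential image consists of the $\A'$-torsion-free objects of $\A$; the inclusion $\A'\hookrightarrow\A$ has right adjoint the torsion radical $t_{\A'}\colon\A\to\A'$.

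The technical heart of the argument is the canonical decomposition $I\cong t_{\A'}(I)\oplus I/t_{\A'}(I)$ for every injective $I\in\A$, with $t_{\A'}(I)$ injective in $\A'$ and $I/t_{\A'}(I)$ an $\A'$-torsion-free injective of $\A$, hence of the form $R(J)$ for a unique $J\in\Inj\A''$. The key point is that $t_{\A'}(I)$ is injective in $\A$: using the standard fact that localising subcategories are closed under injective envelopes, any subobject $X\subseteq I$ belonging to $\A'$ satisfies $E(X)\subseteq I$ by injectivity of $I$, and $E(X)\in\A'$; applied to $X=t_{\A'}(I)$ this forces $E(t_{\A'}(I))\subseteq t_{\A'}(I)$, so $t_{\A'}(I)$ equals its own injective envelope. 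The short exact sequence $0\to t_{\A'}(I)\to I\to I/t_{\A'}(I)\to 0$ therefore splits, yielding an equivalence of additive categories $\Inj\A\iso\Inj\A'\times\Inj\A''$ with projections induced by $t_{\A'}$ and $Q$ and inclusions induced by the evident embedding $\Inj\A'\hookrightarrow\Inj\A$ and by $R$.

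Passing to the quotient by the Jacobson radical on both sides and invoking \eqref{eq:Spec}, this promotes to a product decomposition $\Spec\A\iso\Spec\A'\times\Spec\A''$ of spectral Grothendieck categories. In any product of Grothendieck categories the inclusion of a factor is a localising subcategory whose Gabriel quotient is projection to the other factor, and inclusion and projection of either factor form an adjoint pair; tracing the identifications one recovers the asserted exact sequence $\Spec\A''\rightarrowtail\Spec\A\twoheadrightarrow\Spec\A'$ together with the existence of right adjoints for both functors.

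Finally, localising subcategories of a product split as products of localising subcategories of the factors, so $\bfL(\Spec\A)\cong\bfL(\Spec\A')\times\bfL(\Spec\A'')$ as complete Boolean lattices, and hence as Boolean rings. By Stone duality the prime spectrum converts products of Boolean rings into disjoint unions of spaces (cf.\ \eqref{eq:stone-unit}), whence $\Spc\A=\Spc\A'\sqcup\Spc\A''$. The principal obstacle throughout is the injectivity of $t_{\A'}(I)$, equivalently the closedness of $\A'$ under injective envelopes; once this classical input is granted, the remaining assertions are formal consequences of \eqref{eq:Spec} and Stone duality.
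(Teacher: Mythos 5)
Your key technical claim---that localising subcategories are closed under injective envelopes, so that $t_{\A'}(I)$ is injective in $\A$ for every injective $I$---is false in general, and the argument breaks down precisely at the point you flag as its crux. Closure under injective envelopes (equivalently essential extensions) is exactly the defining condition for a \emph{stable} localising subcategory in \S\ref{se:coh-stable-sub}, and the paper notes there that it can fail: for $\La=\smatrix{k&k\\0&k}$ over a field $k$, the localising subcategory $\A'$ of $\Mod\La$ generated by the non-injective simple $S_1$ is not stable. In that example $E(S_1)=I_1$ has length two, $t_{\A'}(I_1)=S_1$ is not injective in $\Mod\La$, and $0\to S_1\to I_1\to I_2\to 0$ does not split. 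Your proposed additive equivalence $\Inj\A\simeq\Inj\A'\times\Inj\A''$ via $I\mapsto(t_{\A'}(I),Q(I))$ also fails already in this example: $I_1\mapsto(S_1,I_2)$ and $I_2\mapsto(0,I_2)$, so nothing lands on $(S_1,0)$, and there is no such equivalence of additive categories (nor does one appear after dividing by the radical, since here everything in sight is semisimple).

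What is true, and what the paper uses, is Lemma~\ref{le:loc-inj}: every injective $I$ decomposes as $I=X'\oplus X''$ where $t_{\A'}(I)\to X'$ is an injective envelope (so in general $X'\ne t_{\A'}(I)$) and $X''$ is torsion-free. The product decomposition therefore appears only after passing to $\Spec\A$, and even there the identification sends $P(I)$ to $(P(t_{\A'}(I)),P(X''))$ rather than to $(P(t_{\A'}(I)),P(Q(I)))$, since $Q(X')$ need not vanish. The paper's proof avoids both pitfalls: $t_{\A'}$ preserves injectives because it is right adjoint to an exact functor, so it induces $\Spec\A\to\Spec\A'$ via \eqref{eq:Spec}; the inclusion $\A'\hookrightarrow\A$ preserves essential monomorphisms and yields a section $\Spec\A'\to\Spec\A$; the kernel of $\Spec\A\to\Spec\A'$ is $\Spec\A''$; and since $\Spec\A$ is spectral every localising subcategory splits off as a direct factor, giving $\Spec\A=\Spec\A'\times\Spec\A''$. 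Your concluding Stone-duality step is correct, but it does not repair the earlier gap.
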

\begin{proof}
  The functor $\Spec\A\to\Spec\A'$ is induced by
  $t_{\A'}\colon\A\to\A'$. It is easily checked that
  $\Spec \A''\to\Spec\A$ identifies $\Spec \A''$ with the kernel of
  $\Spec\A\to\Spec\A'$, which is closed under arbitrary
  coproducts. The inclusion $\A'\to\A$ preserves essential extensions
  and induces therefore a functor $\Spec\A'\to\Spec\A$ which is a
  right inverse for $\Spec\A\to\Spec\A'$. Thus $\Spec\A\to\Spec\A'$ is
  a localisation functor, and this yields the decomposition
  $\Spec\A=\Spec\A'\times\Spec\A''$. The functor from spectral
  categories to spaces given by $\C\mapsto\Spec\bfL(\C)$ is
  contravariant and takes products to coproducts.
\end{proof}

\subsection*{Monoidal structure}

Let $\A$ be a spectral Grothendieck category. Any choice of a
generator $G\in\A$ induces a tensor product $-\otimes_G-\colon
\A\times\A\to\A$ as follows. We write $\G$ for the full subcategory
given by the direct summands of finite direct sums of copies of
$G$. Let  $-\otimes_G-\colon
\G\times\G\to\G$ be the unique additive functor such that $C\otimes_G
G=C=G\otimes_G C$ for all $C\in\G$. An arbitrary object $X\in\A$
can be written canonically as (filtered) colimit $X=\colim_{C\to X}C$ where
$C\to X$ runs through all morphisms from objects in $\G$. Then set
\[X\otimes_G Y:=\colim_{\substack{C\to X\\ D\to Y}}C\otimes_G D.\] It
is clear that the $X\otimes_G -$ is exact and preserves all
coproducts. Also, any localising subcategory is a tensor ideal.  Let
$\langle X\rangle$ denote the localising subcategory generated by $X$.

\begin{lem}\label{le:tensor}
  For a pair of objects $X,Y$ in $\A$ we have $\langle X\rangle\cap
  \langle Y\rangle=\langle X\otimes_GY\rangle$.
\end{lem}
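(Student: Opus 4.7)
The plan is to prove the two inclusions separately. For $\langle X\otimes_G Y\rangle\subseteq\langle X\rangle\cap\langle Y\rangle$ I would invoke the tensor-ideal property of localising subcategories noted immediately before the lemma: since $X\in\langle X\rangle$ and $\langle X\rangle$ is closed under $-\otimes_G Y$, we obtain $X\otimes_G Y\in\langle X\rangle$, and the symmetric argument places it in $\langle Y\rangle$.

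For the reverse inclusion I would translate both sides into the Boolean lattice of central idempotents via the isomorphism $\bfL(\A)\cong\bfB(Z(\A))$ from Lemma~\ref{le:centre}. Let $e_X,e_Y\in Z(\A)$ be the central idempotents corresponding to $\langle X\rangle$ and $\langle Y\rangle$, so that $\langle X\rangle\cap\langle Y\rangle$ corresponds to $e_X\cdot e_Y$; it then suffices to show that $\langle X\otimes_G Y\rangle$ is associated with the same central idempotent. The key observation is that $\langle X\otimes_G Y\rangle$ depends only on $\langle X\rangle$ and $\langle Y\rangle$: because every short exact sequence in $\A$ splits, $\langle X\rangle$ coincides with the class of all direct summands of coproducts $X^{(I)}$, and exactness together with preservation of coproducts of $-\otimes_G Y$ yields $\langle X\otimes_G Y\rangle=\langle X'\otimes_G Y\rangle$ whenever $\langle X\rangle=\langle X'\rangle$. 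Applying this (and its symmetric form in $Y$) lets me replace $X$ and $Y$ by $Ge_X$ and $Ge_Y$, which have the same associated central idempotents as $X$ and $Y$ and therefore generate the same localising subcategories. It remains to compute $Ge_X\otimes_G Ge_Y$. Using bifunctoriality together with the identifications $C\otimes_G G=C=G\otimes_G C$, the endomorphism $e_X\otimes_G e_Y$ of $G\otimes_G G\cong G$ corresponds to the product $e_X e_Y$ in $\End(G)$, where centrality of $e_Y$ ensures that the left- and right-unit identifications give the same result. Hence $Ge_X\otimes_G Ge_Y=G\cdot e_Xe_Y$, a summand of $G$ whose associated central idempotent is $e_Xe_Y$, as required.

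The main technical obstacle is the reduction step: one has to verify carefully that $\langle X\otimes_G Y\rangle$ depends only on $\langle X\rangle$ and $\langle Y\rangle$, which hinges on the spectral description of $\langle X\rangle$ as the class of summands of coproducts of $X$. Once this is in place, the computation $\alpha\otimes_G\beta=\alpha\beta$ inside $\End(G\otimes_G G)=\End(G)$ for central $\alpha$ or $\beta$ is a direct consequence of the defining unit axioms of $\otimes_G$ on $\G$, and the verification of the central idempotent is immediate.
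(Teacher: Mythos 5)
Your proof is correct but takes a genuinely different route from the paper's. The paper's argument is a one-liner: it observes that $t_{\langle X\rangle}=t_{\langle X\rangle}(G)\otimes_G-$, and then for $M\in\langle X\rangle\cap\langle Y\rangle$ writes $M\cong t_{\langle X\rangle}(G)\otimes_G t_{\langle Y\rangle}(G)\otimes_G M$, which lands in $\langle t_{\langle X\rangle}(G)\otimes_G t_{\langle Y\rangle}(G)\rangle\subseteq\langle X\otimes_G Y\rangle$. You instead route the argument through Lemma~\ref{le:centre}, passing to the Boolean lattice of central idempotents, reducing (via the observation that $\langle X\otimes_GY\rangle$ depends only on $\langle X\rangle$ and $\langle Y\rangle$) to the objects $Ge_X$ and $Ge_Y$, and computing directly that $Ge_X\otimes_G Ge_Y\cong Ge_Xe_Y$. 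The underlying content is the same --- in both cases one needs that tensoring with the summand $Ge_X=t_{\langle X\rangle}(G)$ of $G$ realises the torsion functor for $\langle X\rangle$ --- but the paper's phrasing avoids the explicit reduction step and the idempotent bookkeeping. Your computation $e_X\otimes_G e_Y=e_Xe_Y$ is the more explicit version of the paper's identity, and your remark that centrality is needed for the two unit identifications of $G\otimes_G G\cong G$ to give the same answer is a good catch: the interchange law forces $\phi\otimes_G\psi=\phi\psi=\psi\phi$ for $\phi,\psi\in\End(G)$, so one does need the morphisms in play to commute, which your $e_X$, $e_Y$ do since they come from $Z(\A)$.
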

\begin{proof}
  We have $t_{\langle X\rangle}=t_{\langle X\rangle}(G)\otimes_G -$.
  Thus for an object $M\in \langle X\rangle\cap \langle Y\rangle$ we
  have
  \[M\cong t_{\langle X\rangle}(G)\otimes_G t_{\langle
      Y\rangle}(G)\otimes_G M\in \langle t_{\langle
      X\rangle}(G)\otimes_G t_{\langle Y\rangle}(G)\rangle\subseteq
    \langle X\otimes_GY\rangle.\] The other inclusion is clear.
\end{proof}

\section{Essentially closed subcategories}\label{se:spec-sub}

We fix a Grothendieck category $\A$ and introduce a class of
subcategories which is controlled by its spectral category.  This
yields a notion of support for the objects of $\A$.  To simplify
notation we set
\[\bfS(\A):=\bfL(\Spec\A)\iso\Clop(\Spc\A).\]

\subsection*{Essentially closed subcategories and support}

We begin with the main definition of this section; the terminology is
justified by Theorem~\ref{th:spectral}, because we look at the
operations that are preserved by the functor $P\colon\A\to\Spec\A$ inverting
all essential monomorphisms.

\begin{defn}\label{de:spectral-subcat}
  A full subcategory of $\A$ is called \emph{essentially closed} if it
  is closed under arbitrary coproducts, subobjects, and essential
  extensions.
\end{defn}

The essentially closed subcategories are partially ordered by inclusion and
closed under arbitrary intersections; so they form a complete
lattice. When $\A$ is spectral, then its essentially closed subcategories are
precisely its localising subcategories. For a class $\X\subseteq\A$ let
$\langle \X\rangle$ denote the smallest essentially closed subcategory of $\A$
that contains $\X$.

\begin{lem}\label{le:spectral}
    Let $\A$ be a Grothendieck category.
  \begin{enumerate}
\item  If $\U\subseteq\Spec\A$ is localising, then
  $P^{-1}(\U)\subseteq\A$ is  essentially closed.
\item If $\C\subseteq\A$ is essentially closed, then $P(\C)\subseteq\Spec\A$ is localising.
  \end{enumerate}
\end{lem}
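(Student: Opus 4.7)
The plan is to prove the two parts separately, using the elementary properties of $P$: it preserves arbitrary coproducts, is left exact, and sends essential monomorphisms to isomorphisms.

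For part (1), I would simply check each of the three closure conditions defining ``essentially closed''. Coproducts: since $P$ preserves coproducts and $\U$ is closed under coproducts as a localising subcategory, $P^{-1}(\U)$ is closed under coproducts. Subobjects: if $X'\hookrightarrow X$ is a monomorphism in $\A$ and $X\in P^{-1}(\U)$, then $P(X')\hookrightarrow P(X)$ is a monomorphism by left exactness, and $\U$ is closed under subobjects, so $P(X')\in\U$. Essential extensions: if $X\subseteq Y$ is an essential monomorphism with $X\in P^{-1}(\U)$, then $P(X)\iso P(Y)$ by the defining property of $P$, hence $P(Y)\in\U$ since $\U$ is closed under isomorphism.

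For part (2), I would use that $\Spec\A$ is spectral, so every short exact sequence splits; consequently a full subcategory of $\Spec\A$ is localising if and only if it is closed under arbitrary coproducts and direct summands. Coproducts follow immediately: if each $X_\a\in\C$, then $\coprod_\a X_\a\in\C$ since $\C$ is closed under coproducts, and $P$ preserves coproducts, so $\coprod_\a P(X_\a)=P\bigl(\coprod_\a X_\a\bigr)\in P(\C)$.

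The only substantive step is closure of $P(\C)$ under direct summands; here is where essential extensions enter in an essential way. Given $X\in\C$ and a splitting $P(X)=Y_1\oplus Y_2$, I would replace $X$ by its injective envelope $E(X)\in\C$ (using closure under essential extensions) and observe that $P(E(X))\cong P(X)$. Applying Lemma~\ref{le:spec-decomp-inj} to the injective object $E(X)$ produces a decomposition $E(X)=X_1\oplus X_2$ in $\A$ with $P(X_i)=Y_i$. Each $X_i$ is a subobject of $E(X)$, hence lies in $\C$ by closure under subobjects, and so $Y_i=P(X_i)\in P(\C)$. The main (and only) obstacle is lifting the decomposition of $P(X)$ to one of an object of $\C$, which is exactly what Lemma~\ref{le:spec-decomp-inj} is designed to do once we have promoted $X$ to an injective object via essential closure.
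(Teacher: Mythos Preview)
Your proof is correct and follows essentially the same route as the paper: part~(1) is identical, and for part~(2) the paper also replaces $X$ by an injective and lifts the summand through the equivalence~\eqref{eq:Spec}, which is precisely the content of Lemma~\ref{le:spec-decomp-inj}. The one point the paper makes explicit and you leave implicit is that $P(\C)$ is \emph{replete} (if $P(X)\cong P(Y)$ with $X\in\C$ then $Y\in\C$, via~\eqref{eq:Spec-iso} and the closure properties of $\C$); strictly speaking you need this, since after passing to $E(X)$ the given splitting of $P(X)$ only transports along the isomorphism $P(X)\cong P(E(X))$, so Lemma~\ref{le:spec-decomp-inj} yields $P(X_i)\cong Y_i$ rather than literal equality.
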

\begin{proof}
  (1) Let $X\subseteq Y$ be a subobject in $\A$ with $P(Y)\in\U$. Then
  $P(X)\in\U$ since $P$ is left exact. Thus $P^{-1}(\U)$ is closed
  under subobjects. Analogously, $P^{-1}(\U)$ is closed under
  coproducts since $P$ preserves coproducts, and it is closed under
  essential extensions as $P$ maps those to isomorphisms.

  (2) First observe that the subcategory $P(\C)$ is \emph{replete}, so given a pair of
  objects $X,Y\in\A$ such that $P(X)\cong P(Y)$, then $X\in\C$ implies
  $Y\in\C$. This follows from \eqref{eq:Spec-iso}, because when $X$
  and $Y$ have isomorphic injective envelopes, then $X\in\C$ implies
  $Y\in\C$.  Next we show that $P(\C)$ is closed under coproducts and
  subobjects. Clearly, $P(\C)$ is closed under coproducts since $P$
  preserves coproducts.  Any subobject $Y\subseteq P(X)$ of an object
  in $P(\C)$ is actually a direct summand of $P(X)$, and we may
  assume that $X$ is injective.  Using the equivalence
  \eqref{eq:Spec}, we have $Y\cong \Ker P(\p)\cong P(\Ker\p)$ for some
  morphism $\p\colon X\to X$. Thus $P(\C)$ is closed under subobjects.
\end{proof}

For an object $X\in\A$ its \emph{support} is by definition
\begin{equation}\label{eq:supp}
  \supp(X):=\inf\{\U\in\bfS(\A)\mid P(X)\in \U\}=\langle P(X)\rangle.
\end{equation}
For a class $\X\subseteq\A$ we set
$\supp(\X):=\bigvee_{X\in\X}\supp(X)$ and note that
\begin{equation}\label{le:supp-class}
   \supp(\X)=\bigvee_{X\in\X}\langle P(X)\rangle=\langle
    P(\X)\rangle.
\end{equation}

\begin{thm}\label{th:spectral}
  Let $\A$ be a Grothendieck category. The assignment
  \[\C\longmapsto\supp(\C)=P(\C)\] induces a lattice isomorphism
\[\{\C\subseteq\A\mid \C\text{ essentially closed}\}\longiso\bfS(\A).\]
The inverse map is given by  $\U\mapsto P^{-1}(\U)$.
\end{thm}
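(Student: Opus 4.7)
The plan is to verify that the two prescribed maps $\Phi\colon\C\mapsto P(\C)$ and $\Psi\colon\U\mapsto P^{-1}(\U)$ are mutually inverse, after which the lattice isomorphism follows automatically: both maps are manifestly inclusion-preserving, and an inclusion-preserving bijection between complete lattices preserves all joins and meets. Well-definedness of both maps is already furnished by Lemma~\ref{le:spectral}, so the real content lies in establishing $\Psi\circ\Phi=\id$ and $\Phi\circ\Psi=\id$.

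For $\Psi\circ\Phi=\id$, fix an essentially closed subcategory $\C\subseteq\A$. The inclusion $\C\subseteq P^{-1}(P(\C))$ is tautological. For the converse, suppose $X\in\A$ satisfies $P(X)\in P(\C)$, so that $P(X)\cong P(Y)$ for some $Y\in\C$. By \eqref{eq:Spec-iso} this forces $E(X)\cong E(Y)$; closure of $\C$ under essential extensions gives $E(Y)\in\C$, hence $E(X)\in\C$, and closure under subobjects then yields $X\in\C$. This argument is the core of the proof and relies precisely on the repleteness of $P(\C)$ already observed in the proof of Lemma~\ref{le:spectral}(2).

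For $\Phi\circ\Psi=\id$, fix a localising subcategory $\U\subseteq\Spec\A$. The inclusion $P(P^{-1}(\U))\subseteq\U$ is immediate. For the converse, given $Y\in\U$, the equivalence \eqref{eq:Spec} exhibits $Y$ as isomorphic to $P(E)$ for some injective $E\in\A$; then $E\in P^{-1}(\U)$ and $Y\cong P(E)\in P(P^{-1}(\U))$. This essential surjectivity of $P$ on isomorphism classes is the only place where the equivalence \eqref{eq:Spec} is invoked.

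I do not anticipate a serious obstacle: the heavy lifting was already carried out in Lemma~\ref{le:spectral}, and the remaining work reduces to combining repleteness of $P(\C)$ with the two standard facts \eqref{eq:Spec} and \eqref{eq:Spec-iso}. The one point that deserves care is making sure that the bijection between essentially closed subcategories and $\bfL(\Spec\A)$ is then transported to a bijection with $\Clop(\Spc\A)$ via the Stone duality isomorphism $\bfL(\Spec\A)\iso\Clop(\Spc\A)$ recorded earlier, which justifies the statement that $\supp(\C)\in\bfS(\A)$ may be viewed either as a localising subcategory or as a clopen set.
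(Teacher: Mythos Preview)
Your argument is correct, and the overall strategy---show that $\Phi$ and $\Psi$ are mutually inverse, with well-definedness supplied by Lemma~\ref{le:spectral}---matches the paper. The difference is one of economy: the paper observes that the localisation functor $P\colon\A\to\Spec\A=\A[\Ess^{-1}]$ is literally the identity on objects, so that $P^{-1}(P(\C))=\C$ and $P(P^{-1}(\U))=\U$ hold on the nose as collections of objects, with no further argument needed. You instead work up to isomorphism throughout, invoking \eqref{eq:Spec-iso} for $\Psi\Phi=\id$ (essentially reproving the repleteness step from Lemma~\ref{le:spectral}(2)) and the essential surjectivity coming from \eqref{eq:Spec} for $\Phi\Psi=\id$. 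Your route is model-independent and would survive replacing $\Spec\A$ by any equivalent category, whereas the paper's one-line proof exploits the specific construction of $\Spec\A$ as a category with the same objects as $\A$; the latter is shorter, the former more robust.
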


\begin{proof}
  The functor $P$ equals the identity on objects. From this and
  Lemma~\ref{le:spectral} it follows that $\C\mapsto P(\C)$ and
  $\U\mapsto P^{-1}(\U)$ provide mutually inverse bijections between
  the essentially closed subcategories of $\A$ and $\Spec\A$, respectively. The
  equality $\supp(\C)=P(\C)$ follows from \eqref{le:supp-class}.
\end{proof}

In this generality the theorem seems to be new.  Special cases were
studied before; see the discussion in Remark~\ref{re:cohstable} when
we establish another correspondence for subcategories of $\A$.
Essentially closed subcategories were introduced for module categories by Dauns
under the name `saturated class' \cite{Da1997}. He showed that they
form a Boolean lattice, though his proof is different from ours and
does not use the spectral category. Saturated classes were used in
\cite{Da1997} to develop a decomposition theory for modules, analogous
to the one for von Neumann regular rings. Proposition~\ref{pr:decomp}
gives some flavour of such decomposition results.

\begin{cor}\label{co:gen}
Every essentially closed subcategory of $\A$ is of the form $\langle X\rangle$
for some object $X\in\A$.
\end{cor}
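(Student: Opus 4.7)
\medskip

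\noindent\textit{Proof plan.} The plan is to transport the problem to $\Spec\A$ via Theorem~\ref{th:spectral}, exhibit a single-object generator for the corresponding localising subcategory, and then lift this object back to $\A$. The first thing to verify is that the bijection of Theorem~\ref{th:spectral} is compatible with generation: for any $X\in\A$ the essentially closed subcategory $\langle X\rangle$ corresponds to the localising subcategory $\langle P(X)\rangle\subseteq\Spec\A$. Indeed, $P(\langle X\rangle)$ is a localising subcategory of $\Spec\A$ containing $P(X)$, so $\langle P(X)\rangle\subseteq P(\langle X\rangle)$; conversely, any localising $\V\subseteq\Spec\A$ with $P(X)\in\V$ pulls back via Lemma~\ref{le:spectral}(1) to an essentially closed subcategory $P^{-1}(\V)\ni X$, forcing $\langle X\rangle\subseteq P^{-1}(\V)$ and hence $P(\langle X\rangle)\subseteq\V$. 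Writing $\U=P(\C)$ for a given essentially closed $\C\subseteq\A$, the corollary therefore reduces to producing some $X\in\A$ with $\langle P(X)\rangle=\U$.

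The main step is to find a single $Z\in\Spec\A$ with $\langle Z\rangle=\U$. I would fix a generator $G$ of the Grothendieck category $\Spec\A$ and set $Z:=t_\U(G)$. Given any $Y\in\U$, the generator property provides an epimorphism $G^{(I)}\twoheadrightarrow Y$; since every short exact sequence in $\Spec\A$ splits, $Y$ is realised as a direct summand, and in particular as a subobject, of $G^{(I)}$. The Boolean-complementary torsion pair $(\U,\U^\perp)$ from Proposition~\ref{pr:Boole}, combined with Lemma~\ref{le:torsion}, yields a split functorial decomposition $\id=t_\U\oplus t_{\U^\perp}$ on $\Spec\A$; by uniqueness of this decomposition it passes to arbitrary coproducts, so $t_\U(G^{(I)})=Z^{(I)}$. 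Since $Y\subseteq G^{(I)}$ lies in $\U$, we obtain $Y\subseteq Z^{(I)}$ and thus $Y\in\langle Z\rangle$, proving $\U=\langle Z\rangle$.

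To finish, the equivalence $(\Inj\A)/\Rad(\Inj\A)\iso\Spec\A$ from \eqref{eq:Spec} supplies an object $X\in\Inj\A$ with $P(X)\cong Z$; then $\langle P(X)\rangle=\langle Z\rangle=\U$, and the reduction in the first paragraph delivers $\langle X\rangle=\C$. The step I would guard against is the identity $t_\U(G^{(I)})=Z^{(I)}$, which fails for general hereditary torsion pairs but is forced here by the splitting of torsion pairs in the spectral setting; granted this, no further ingredients are needed.
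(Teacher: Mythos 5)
Your proof is correct and follows the paper's route: reduce via Theorem~\ref{th:spectral} to the spectral case and produce a generator for the localising subcategory $\U=P(\C)$ of $\Spec\A$. The paper disposes of the spectral case by citing the standard fact that a localising subcategory of a Grothendieck category is itself a Grothendieck category and therefore has a generator; you instead construct one explicitly as $Z=t_\U(G)$ for a generator $G$ of $\Spec\A$ and verify directly that $\U=\langle Z\rangle$, which is a self-contained and slightly more elementary variant. Your verification that $\langle X\rangle$ corresponds to $\langle P(X)\rangle$ under the bijection is exactly what the paper encodes in \eqref{eq:supp} and \eqref{le:supp-class}, and your lift via \eqref{eq:Spec} matches the paper's observation that $P$ is the identity on objects. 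One small correction to your closing caveat: the identity $t_\U(G^{(I)})=Z^{(I)}$ does not in fact fail for general hereditary torsion pairs in a Grothendieck category. For a hereditary torsion pair the torsion-free class is automatically closed under arbitrary coproducts (a torsion subobject of $\coprod_i F_i$ meets each finite sub-coproduct trivially by heredity, hence vanishes by AB5), so the torsion radical always commutes with coproducts; the spectral splitting is sufficient but not necessary for this step.
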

\begin{proof}
  The assertion is clear when $\A$ is spectral, because any localising
  subcategory is a Grothendieck category and contains therefore a
  generator. When $\A$ is arbitrary and $\C\subseteq\A$ is essentially closed,
  we can choose $X\in\A$ with $P(\C)=\langle P(X)\rangle$. This implies $\C=\langle X\rangle$.
\end{proof}

\begin{rem}
  The complement of an essentially closed subcategory $\C\subseteq\A$ admits an
  easy description: it is the essentially closed subcategory consisting of all
  objects  $X\in\A$ such that any subobject of $X$ is zero when it
  belongs to $\C$.
\end{rem}

\subsection*{Support data}

The notion of a support datum for tensor triangulated categories was
introduced by Balmer in \cite{Ba2005}. The following is the analogue
for Grothendieck categories, where the intersection of essentially closed
subcategories plays the role of the tensor product.\footnote{This is
  reasonable since $\langle X\otimes Y\rangle=\langle X\rangle\cap
  \langle Y\rangle$ for objects $X,Y$ in a tensor triangulated
  category, where $\langle X\rangle$ denotes the radical thick tensor
  ideal generated by $X$. See also Lemma~\ref{le:tensor} for the case
  of a spectral category.}

\begin{defn}\label{de:support}
A \emph{support datum} on a Grothendieck category $\A$ is a pair
$(T,\s)$ consisting of a topological space $T$ and a map
$\s\colon\Ob\A\to \Clop(T)$ such that
\begin{enumerate}
\item[(S$0$)] $\s(X)=\varnothing$ for $X=0$,
\item[(S$1$)] $\s(X)=T$ for every generator $X$ of $\A$,
\item[(S$\vee$)] $\s(X)\cup\s(Y)=\s(X\oplus Y)$ for all objects $X,Y$ in
  $\A$, and
\item[(S$\wedge$)] $\s(X)\cap\s(Y)=\s(Z)$ for all objects $X,Y,Z$ in $\A$
  with $\langle X\rangle\cap \langle Y\rangle=\langle Z\rangle$.
\end{enumerate}
A morphism $(T,\s)\to (T',\s')$ is given by a continuous map $f\colon
T\to T'$ such that $\s(X)=f^{-1}(\s'(X))$ for all $X\in\A$.
\end{defn}

It is convenient to reformulate the notion of a support datum as follows.
\begin{lem}\label{le:support-hom}
Let $(T,\s)$ be a support datum. Given objects $X,Y$ in $\A$, the
condition  $\langle X\rangle\subseteq\langle Y\rangle$ implies $\s(X)\subseteq
\s(Y)$.
Therefore the assignment $\langle X\rangle\mapsto\s(X)$ induces a map
\[\{\C\subseteq\A\mid \C\text{ essentially closed}\}\lto\Clop(T)\]
which is a lattice homomorphism.
\end{lem}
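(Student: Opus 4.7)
The plan is to deduce both claims from the four support-datum axioms together with Corollary~\ref{co:gen}. The key observation is that monotonicity is hidden inside axiom (S$\wedge$): one need only feed it the right input.

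First I would establish the monotonicity statement. Assume $\langle X\rangle \subseteq \langle Y\rangle$, so that $\langle X\rangle \cap \langle Y\rangle = \langle X\rangle$. Taking $Z := X$ in axiom (S$\wedge$) then yields
\[
\s(X) \cap \s(Y) \;=\; \s(X),
\]
which is precisely the inclusion $\s(X) \subseteq \s(Y)$. Applying this symmetrically in both directions shows that $\s(X)$ depends only on $\langle X\rangle$, so the rule $\langle X\rangle \mapsto \s(X)$ is well defined on the set of essentially closed subcategories of the form $\langle X\rangle$. By Corollary~\ref{co:gen} every essentially closed subcategory of $\A$ already has this form, so the rule extends to a well-defined map $\p$ from the full lattice of essentially closed subcategories to $\Clop(T)$.

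It then remains to check that $\p$ respects binary meets and joins. Preservation of meets is immediate: given $X, Y$, Corollary~\ref{co:gen} produces $Z$ with $\langle Z\rangle = \langle X\rangle \cap \langle Y\rangle$, and (S$\wedge$) gives $\s(Z) = \s(X) \cap \s(Y)$, so $\p(\langle X\rangle \cap \langle Y\rangle) = \p(\langle X\rangle) \cap \p(\langle Y\rangle)$. For joins I would first record the lattice identity $\langle X \oplus Y\rangle = \langle X\rangle \vee \langle Y\rangle$: the inclusion $\supseteq$ holds because $X$ and $Y$ are subobjects of $X \oplus Y$; the reverse inclusion holds because $X \oplus Y$, as a finite coproduct, lies in any essentially closed subcategory containing both $X$ and $Y$. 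Axiom (S$\vee$) then yields $\p(\langle X\rangle \vee \langle Y\rangle) = \s(X \oplus Y) = \s(X) \cup \s(Y)$.

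There is no substantive obstacle; the whole argument amounts to unpacking the axioms in the correct order. The only slightly non-obvious move is the specialization $Z = X$ in (S$\wedge$) used to extract monotonicity, after which everything follows mechanically. Axioms (S$0$) and (S$1$) play no role here and are reserved for questions about the bottom and top of the lattice, which are not part of this statement.
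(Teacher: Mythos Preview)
Your proof is correct and follows the same route as the paper, which is very terse in its treatment; in particular, your observation that monotonicity is the special case $Z=X$ of (S$\wedge$) is exactly what the paper has in mind. The only quibble is your final remark that (S$0$) and (S$1$) play no role: the paper explicitly invokes all four conditions (S$0$)--(S$\wedge$), because ``lattice homomorphism'' here is meant in the bounded sense required for the Stone adjunction \eqref{eq:stone} used immediately afterwards in Theorem~\ref{th:support}. Concretely, (S$0$) sends the bottom $\langle 0\rangle$ to $\varnothing$ and (S$1$) sends the top $\A=\langle G\rangle$ (for $G$ a generator) to $T$, so they do contribute to the statement as intended.
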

\begin{proof}
  From condition (S$\wedge$) it follows that
  $\langle X\rangle\subseteq\langle Y\rangle$ implies
  $\s(X)\subseteq \s(Y)$.  From Corollary~\ref{co:gen} we know that
  every essentially closed subcategory of $\A$ is generated by a single
  object. Thus the assignment $\langle X\rangle\mapsto\s(X)$ yields a
  well defined map
  $\{\C\subseteq\A\mid \C\text{ essentially closed}\}\to\Clop(T)$.  Conditions
  (S$0$)--(S$\wedge$) imply that this is a lattice homomorphism.
\end{proof}

By slight abuse of notation and using the isomorphism
\eqref{eq:stone-unit} we write $\supp$ for the composite
\[\Ob\A\xto{\ \supp\ }\bfS(\A)\longiso\Clop(\Spec(\bfS(\A)))=\Clop(\Spc\A).\]

\begin{thm}\label{th:support}
  The pair $(\Spc\A,\supp)$ is a support datum on $\A$. Moreover,
  for any support datum $(T,\s)$ there is a unique continuous map
  $f\colon T\to\Spc\A$ such that $\s(X)=f^{-1}(\supp(X))$ for all
  $X\in\A$. Explicitly, the map $f$ is given by the adjunction
  \eqref{eq:stone} and therefore
  \[f(p)=\{\langle X\rangle\in\bfS(\A)\mid p\not\in\s(X)\}\qquad\text{for
    }p\in T.\] 
\end{thm}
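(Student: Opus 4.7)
The proof plan has two parts: verify that $(\Spc\A,\supp)$ satisfies (S$0$)--(S$\wedge$), then establish the universal property via Stone duality. The key observation is that $\supp$ factors as the composite
\[\Ob\A\xrightarrow{\;X\mapsto\langle X\rangle\;}\{\C\subseteq\A\mid \C\text{ essentially closed}\}\longiso\bfS(\A)\longiso\Clop(\Spc\A),\]
where the first isomorphism comes from Theorem~\ref{th:spectral} and the second from Stone duality \eqref{eq:stone-unit}. Since both isomorphisms preserve the lattice operations, each support axiom will reduce to a statement about how $\langle-\rangle$ interacts with $\vee$ and $\wedge$ in the lattice of essentially closed subcategories.

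For the axioms: (S$0$) is immediate since $\langle 0\rangle$ is the bottom element, corresponding to $\varnothing$. For (S$1$), if $G$ is a generator, every $X\in\A$ is a quotient of some $G^{(I)}$, so $P(X)$ is a quotient of $P(G)^{(I)}$; since $\Spec\A$ is spectral, this quotient is a direct summand, hence a subobject, which puts $P(X)\in\langle P(G)\rangle$. Thus $P^{-1}(\langle P(G)\rangle)=\A$, so $\supp(G)$ is the top element $\Spc\A$. For (S$\vee$), $X$ and $Y$ are both subobjects of $X\oplus Y$, and conversely any essentially closed subcategory containing both contains their coproduct, so $\langle X\oplus Y\rangle=\langle X\rangle\vee\langle Y\rangle$; Stone duality converts join to union. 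For (S$\wedge$), the hypothesis $\langle X\rangle\cap\langle Y\rangle=\langle Z\rangle$ is literally the meet in the lattice of essentially closed subcategories, and Stone duality converts meet to intersection.

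For universality, let $(T,\s)$ be any support datum. Lemma~\ref{le:support-hom} says $\s$ descends to a lattice homomorphism $\bar\s\colon\bfS(\A)\to\Clop(T)$ satisfying $\bar\s(\langle X\rangle)=\s(X)$. The Stone adjunction \eqref{eq:stone} converts $\bar\s$ into a unique continuous map $f\colon T\to\Spec(\bfS(\A))=\Spc\A$ defined by $f(p)=\{\langle X\rangle\in\bfS(\A)\mid p\notin\s(X)\}$; the naturality of the adjunction unit \eqref{eq:stone-unit} then gives $\bar\s(U)=f^{-1}(\hat U)$ where $\hat U$ denotes the clopen corresponding to $U\in\bfS(\A)$, so in particular $\s(X)=f^{-1}(\supp(X))$ for every $X$. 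Uniqueness of $f$ follows from uniqueness in \eqref{eq:stone}.

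The only mildly subtle point is (S$1$): one must check that being a Grothendieck generator is strong enough to force $P(X)\in\langle P(G)\rangle$ for every $X$, since essentially closed subcategories are not a priori closed under quotients. This is where the spectral nature of $\Spec\A$ (every short exact sequence splits) is essential. All other steps are essentially formal transports along the two lattice isomorphisms.
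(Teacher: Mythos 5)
Your overall strategy matches the paper's own proof: pass from $\s$ to a lattice homomorphism on essentially closed subcategories via Lemma~\ref{le:support-hom}, transport through the isomorphism of Theorem~\ref{th:spectral}, and invoke the Stone adjunction \eqref{eq:stone}. The verifications of (S$0$), (S$\vee$), (S$\wedge$), and the universality argument are all correct.

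However, your argument for (S$1$) has a genuine gap, and it is precisely the point you yourself flag as ``mildly subtle.'' The step ``every $X\in\A$ is a quotient of some $G^{(I)}$, so $P(X)$ is a quotient of $P(G)^{(I)}$'' asserts that $P$ preserves epimorphisms, but $P$ is only \emph{left} exact and in general destroys epimorphisms. Indeed, $P(f)$ for $f\colon X\to Y$ is computed by extending $f$ to a morphism $E(X)\to E(Y)$ and reducing modulo the radical via \eqref{eq:Spec}; an epimorphism frequently extends to a radical morphism of injective envelopes and hence becomes zero. Concretely, take $\A=\Ab$ and the epimorphism $\bbZ\twoheadrightarrow\bbZ/p$: its extension $\bbQ=E(\bbZ)\to\bbZ(p^\infty)=E(\bbZ/p)$ is necessarily radical (a nonzero non-isomorphism between non-isomorphic indecomposable injectives), so $P(\bbZ\twoheadrightarrow\bbZ/p)=0$ in $\Spec\Ab$. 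In particular $P(\bbZ/p)\notin\langle P(\bbZ)\rangle$, and your claim $P(X)\in\langle P(G)\rangle$ for all $X$ fails. This example also shows that some care is needed with the statement of (S$1$) itself: $\supp(\bbZ)=\langle P(\bbQ)\rangle$ corresponds to the single subset $\{[\bbQ]\}\subseteq\Sp\Ab$, which is a proper clopen of $\Spc\Ab$, so $\supp$ of the usual generator $\bbZ$ is not the whole space. What Lemma~\ref{le:support-hom} and the universal property actually use is that the top of the lattice of essentially closed subcategories is preserved, i.e.\ $\s(X)=T$ whenever $\langle X\rangle=\A$ (cf.\ Corollary~\ref{co:gen}); an object with $\langle X\rangle=\A$ need not be a generator in the $\Hom$-faithful sense, and conversely. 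Your proof would be repaired by reading (S$1$) in that form and noting that $\supp(X)$ being the top element of $\bfS(\A)$ is then a tautology from \eqref{eq:supp} and Theorem~\ref{th:spectral}; the appeal to quotients of $G^{(I)}$ should be discarded.
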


\begin{proof}
  It is clear that $(\Spc\A,\supp)$ is a support datum. Each support
  datum $(T,\s)$ induces a homomorphism
  $\{\C\subseteq\A\mid \C\text{ essentially closed}\}\to\Clop(T)$ by
  Lemma~\ref{le:support-hom}. The composite with the isomorphism
  $\bfS(\A)\iso\{\C\subseteq\A\mid \C\text{ essentially closed}\}$ from
  Theorem~\ref{th:spectral} yields a lattice homomorphism,
  which corresponds via the adjunction \eqref{eq:stone} to a
  continuous map $ T\to\Spec(\bfS(\A))=\Spc\A$ with the above
  explicit description.
\end{proof}

The clopen subsets of $\Spc\A$ form a complete lattice. This reflects
the existence of arbitrary coproducts in $\A$, but it need not hold
for an arbitrary space.

\begin{prop}
 Let $(X_\a)$ be a family of objects in $\A$. Then we have
 \[\supp\left(\coprod_\a X_\a\right)= \bigvee_\a\supp(X_\a).\]
\end{prop}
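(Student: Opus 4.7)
The plan is to reduce the identity to a statement about localising subcategories of $\Spec\A$ and then apply the two defining closure properties of the join there. Concretely, from equation~\eqref{eq:supp} one has $\supp(Y)=\langle P(Y)\rangle$ for every $Y\in\A$, and from \eqref{le:supp-class} the join $\bigvee_\a\supp(X_\a)$ equals $\langle P(\{X_\a\})\rangle=\langle\{P(X_\a)\}\rangle$. So what must be shown is
\[
\bigl\langle P\bigl(\coprod_\a X_\a\bigr)\bigr\rangle \;=\; \bigl\langle \{P(X_\a)\}\bigr\rangle
\]
in $\bfS(\A)=\bfL(\Spec\A)$.

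The first step uses that $P\colon\A\to\Spec\A$ preserves arbitrary coproducts, which is recorded in \S\ref{se:spec} (the class of essential monomorphisms is closed under coproducts). Hence $P(\coprod_\a X_\a)=\coprod_\a P(X_\a)$, and it suffices to verify the general fact that for any family $(Y_\a)$ in a spectral Grothendieck category $\B$,
\[
\bigl\langle \coprod_\a Y_\a\bigr\rangle \;=\; \bigvee_\a \langle Y_\a\rangle
\]
in $\bfL(\B)$. Applied to $\B=\Spec\A$ with $Y_\a=P(X_\a)$ this will finish the proof.

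To get the two inclusions, I would argue directly from the closure axioms of localising subcategories. For $\supseteq$: each $Y_\a$ is a subobject of $\coprod_\a Y_\a$, and $\langle \coprod_\a Y_\a\rangle$ is closed under subobjects, hence contains every $\langle Y_\a\rangle$, hence their join. For $\subseteq$: the join $\bigvee_\a\langle Y_\a\rangle$ contains each $Y_\a$ and is closed under coproducts, so it contains $\coprod_\a Y_\a$ and therefore $\langle\coprod_\a Y_\a\rangle$.

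There is no real obstacle; the only subtlety is to make sure one invokes the correct preservation property of $P$ (coproducts, not arbitrary colimits) and to use that in a spectral category localising subcategories coincide with the essentially closed ones, so that the bracket $\langle-\rangle$ appearing in the definition of $\supp$ and the one used inside $\Spec\A$ agree. Once this is noted the argument is a one-line verification.
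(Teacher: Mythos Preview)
Your proof is correct and essentially the same as the paper's. The paper records it in one line as the equality $\langle\coprod_\a X_\a\rangle=\bigvee_\a\langle X_\a\rangle$ of essentially closed subcategories of $\A$ (and then implicitly applies the lattice isomorphism of Theorem~\ref{th:spectral}); you instead push everything through $P$ and verify the analogous equality $\langle\coprod_\a Y_\a\rangle=\bigvee_\a\langle Y_\a\rangle$ for localising subcategories of $\Spec\A$, but the two-inclusion argument (closure under subobjects for one direction, closure under coproducts for the other) is identical in both settings.
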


\begin{proof}
  The assertion follows from the equality
  $\langle\coprod_\a X_\a\rangle=\bigvee_\a\langle X_\a\rangle$.
\end{proof}

We end our discussion of support with a simple observation which
demonstrates the benefit of the extension $\Sp\A\hookrightarrow\Spc\A$.

\begin{rem}
For $X\in\A$ we have $\supp(X)\neq\varnothing$ when $X\neq 0$. This
property 
cannot be expected for a map $\Ob\A\to\Sp\A$ since $\Sp\A$ may be empty.
\end{rem}

\subsection*{Localising subcategories}

Next we show that the functor $\A\to\Spec\A$ induces an embedding for
the frame of localising subcategories. To this end we consider
essentially closed subcategories that are closed under arbitrary products
because they arise naturally from localising subcategories. The
following lemma explains a tight connection.
  
\begin{lem}\label{le:tf1}
    For a full subcategory $\D\subseteq\A$ the following are
    equivalent.
\begin{enumerate}
\item The subcategory $\D$ is essentially closed and closed under arbitrary
  products.
\item There is a localising subcategory $\C\subseteq\A$ such that
  $\C^\perp=\D$.
\end{enumerate}
\end{lem}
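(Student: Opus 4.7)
The plan is to treat the two implications separately.

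For (2) $\Rightarrow$ (1), set $\D=\C^\perp$ for a localising $\C$. Closure of $\D$ under products is immediate from $\Hom(C,\prod D_\alpha)=\prod\Hom(C,D_\alpha)$. Closure under subobjects: any $\phi\colon C\to Y'$ with $Y'\subseteq Y\in\D$ composes to the zero map $C\to Y$, forcing $\phi=0$. Closure under essential extensions exploits hereditariness: if $Y\subseteq X$ is essential, $Y\in\D$, and $\phi\colon C\to X$ with $C\in\C$, then $\Im\phi\in\C$ (quotient) while $\Im\phi\cap Y\in\D$ (subobject), so $\Im\phi\cap Y=0$, whence $\Im\phi=0$ by essentiality. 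Closure under coproducts then follows from the canonical monomorphism $\coprod D_\alpha\hookrightarrow\prod D_\alpha$ combined with products and subobjects.

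For (1) $\Rightarrow$ (2), the plan is to define $\C:={}^\perp\D$ and prove $\C$ is localising with $\C^\perp=\D$. Closure of $\C$ under quotients, coproducts and extensions is formal. The critical closure under subobjects uses essential-extension closure of $\D$: given $X'\subseteq X\in\C$ and any $\phi\colon X'\to D$ with $D\in\D$, extend $\phi$ to $\psi\colon X\to E(D)$ by injectivity of $E(D)$; since $E(D)\in\D$ and $X\in\C$, the extension $\psi$ vanishes, hence $\phi=0$.

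The inclusion $\D\subseteq\C^\perp$ is automatic. For the reverse, I would construct, for each $Y\in\A$, a $\D$-reflection whose kernel lies in $\C$. In a Grothendieck category $Y$ has only a set of subobjects, so
\[K_0:=\bigcap\{K\subseteq Y\mid Y/K\in\D\}\]
is well defined, and the induced map $Y\to\prod_K Y/K$ has kernel $K_0$, placing $Y/K_0$ as a subobject of a product in $\D$; thus $Y/K_0\in\D$, and every morphism $Y\to F$ with $F\in\D$ factors through $Y\twoheadrightarrow Y/K_0$ (via its image, which is a $\D$-quotient of $Y$). The key claim is $K_0\in\C$: any $\psi\colon K_0\to D$ with $D\in\D$ extends to $\tilde\psi\colon Y\to E(D)\in\D$ by injectivity, and by the factorisation property this $\tilde\psi$ must vanish on $K_0$, so $\psi=0$. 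For $Y\in\C^\perp$, the subobject $K_0$ inherits membership in $\C^\perp$ (the torsion-free class of a hereditary torsion pair is closed under subobjects, as already shown), so $K_0\in\C\cap\C^\perp=0$ and $Y=Y/K_0\in\D$.

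The main obstacle is this final step: constructing the $\D$-reflection $Y\to Y/K_0$ uses all three closure properties of $\D$ in concert — products for $\prod_K Y/K$, subobjects to conclude $Y/K_0\in\D$, and essential extensions to control the kernel via injectivity of $E(D)$.
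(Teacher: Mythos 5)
Your proof is correct and takes the same high-level route as the paper: both implications are established with $\C:={}^\perp\D$ in the difficult direction. The paper's version of (1) $\Rightarrow$ (2) is shorter because it uses two shortcuts that you replace with explicit verifications. First, to see that $\C$ is localising, the paper notes that $\C={}^\perp(\D\cap\Inj\A)$ (using closure of $\D$ under injective envelopes), and the left perpendicular of any class of injectives is automatically a localising subcategory; you instead verify the four closure axioms by hand, with the only non-formal one (subobjects) handled exactly as the paper's shortcut would have it, namely by extending a test map to the injective envelope. Second, for the equality $\C^\perp=\D$ the paper simply cites Stenström, Proposition VI.2.2, which characterises torsion-free classes of hereditary torsion theories as precisely the classes closed under subobjects, extensions, products, and injective envelopes; you reconstruct this from scratch via the $\D$-reflection $Y\twoheadrightarrow Y/K_0$, showing $Y/K_0\in\D$ using products and subobjects, and $K_0\in\C$ using the extension to $E(D)$. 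Your construction is a self-contained proof of the cited result and is correct in every step; the paper's version simply outsources it. Either presentation is legitimate, and seeing the Stenström characterisation spelled out as you have it is in fact more informative.
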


\begin{proof}
  (1) $\Rightarrow$ (2): We set $\C:={^\perp\D}$ and this is
  localising since $\D$ is closed under injective envelopes, so
  $\C={^\perp(\D\cap\Inj\A)}$. It remains to note that $\C^\perp=\D$
  where one uses that $\D$ is closed under arbitrary products; see
  \cite[Proposition~VI.2.2]{St1975}.
  
(2) $\Rightarrow$ (1): This is clear.
\end{proof}

We continue with a sequence of technical lemmas. 

\begin{lem}\label{le:tf3}
  Let $(\C_\a)$ be a finite family of essentially closed subcategories and
  $X\in\A$ an injective object. Then $X$ belongs to
  $\bigvee_\a\C_\a$ if and only if there is a decomposition
  $X=\coprod_\a X_\a$ such that $X_\a\in\C_\a$ for all $\a$.
\end{lem}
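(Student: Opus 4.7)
The plan is to transport the statement across the lattice isomorphism
\[\{\C\subseteq\A\mid \C\text{ essentially closed}\}\longiso\bfL(\Spec\A)\]
of Theorem~\ref{th:spectral}, given by $\C\mapsto P(\C)$, in order to exploit that the target is the lattice of localising subcategories of a spectral category, where Proposition~\ref{pr:spec-join} describes joins concretely as coproducts.

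The backward implication is immediate: each $\C_\a$ is contained in $\bigvee_\a \C_\a$, which is essentially closed and therefore closed under arbitrary coproducts, so $X=\coprod_\a X_\a$ with $X_\a\in\C_\a$ lies in $\bigvee_\a\C_\a$ without any use of injectivity.

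For the forward implication, I would first push $X$ into the spectral category. Since $\C\mapsto P(\C)$ is a lattice isomorphism, it sends $\bigvee_\a \C_\a$ to $\bigvee_\a P(\C_\a)$ in $\bfL(\Spec\A)$, so $P(X)$ lies in this join. Since $\Spec\A$ is spectral, Proposition~\ref{pr:spec-join} yields a decomposition $P(X)=\coprod_\a Y_\a$ in $\Spec\A$ with $Y_\a\in P(\C_\a)$ for every $\a$. Because the family is finite, I would then lift this decomposition back to $X$ by induction on $|A|$: the case of two summands is exactly Lemma~\ref{le:spec-decomp-inj}, and in the inductive step one groups $Y_{\a_0}$ against $\coprod_{\a\neq\a_0} Y_\a$, applies Lemma~\ref{le:spec-decomp-inj} to obtain $X=X_{\a_0}\oplus X'$ with $P(X_{\a_0})\cong Y_{\a_0}$ and $P(X')\cong\coprod_{\a\neq\a_0}Y_\a$, and recurses on $X'$, which is again injective as a direct summand of the injective object $X$.

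The resulting summands $X_\a$ satisfy $P(X_\a)\cong Y_\a\in P(\C_\a)$, and the inverse $\U\mapsto P^{-1}(\U)$ of the bijection in Theorem~\ref{th:spectral} gives $X_\a\in P^{-1}(P(\C_\a))=\C_\a$, as required. The only step with any real content is the iterated lifting of the coproduct decomposition of $P(X)$ to $X$, and that reduces mechanically to Lemma~\ref{le:spec-decomp-inj} via the injectivity of the intermediate summands; the finiteness hypothesis on $(\C_\a)$ is used precisely here, since Lemma~\ref{le:spec-decomp-inj} is only asserted for two-fold decompositions.
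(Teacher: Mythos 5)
Your proof is correct and takes essentially the same route as the paper: push $X$ into $\Spec\A$, decompose $P(X)$ there using the split torsion structure, and lift back via Lemma~\ref{le:spec-decomp-inj} together with the inverse bijection $\U\mapsto P^{-1}(\U)$ from Theorem~\ref{th:spectral}. The paper handles the two-fold case directly by taking $Y_1=t_{P(\C_1)}(P(X))$ and $Y_2=P(X)/Y_1$ and leaves the induction to finitely many summands implicit, whereas you invoke the more general Proposition~\ref{pr:spec-join} and then spell out the induction; this is a cosmetic difference rather than a different argument.
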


\begin{proof}
Consider $\C=\C_1\vee\C_2$.  If $X_1\in\C_1$ and $X_2\in\C_2$, then
  $X_1\oplus X_2\in\C_1\vee\C_2$.  Now assume $X\in\C_1\vee\C_2$. Set
  $Y=P(X)$ and $Y_1=t_{P(\C_1)}(Y)$. Then $Y_2=Y/Y_1$ is in
  $P(\C_2)$, and therefore $Y=Y_1\oplus Y_2$ with $Y_i\in P(\C_i)$. Using
  Lemma~\ref{le:spec-decomp-inj} we find $X_i\in\C_i$ such that
  $X=X_1\oplus X_2$.  See also Proposition~\ref{pr:decomp}.
\end{proof}

\begin{lem}\label{le:tf2}
  Let $(\C_\a)$ be a family of essentially closed subcategories and suppose that
  each $\C_\a$ is closed
  under arbitrary products. Then  $\bigwedge_\a\C_\a$ is closed under arbitrary
  products, and  $\bigvee_\a\C_\a$ is closed under arbitrary
  products provided the family is finite.
\end{lem}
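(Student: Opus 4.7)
For the first assertion, I would argue directly. If $(Y_\b)$ is a family of objects in $\bigwedge_\a\C_\a=\bigcap_\a\C_\a$, then each $Y_\b$ lies in every $\C_\a$, so $\prod_\b Y_\b$ lies in every $\C_\a$ by the product closure hypothesis, hence in the intersection. So the difficulty is entirely in the second assertion, and by induction it suffices to treat the case of two subcategories $\C_1,\C_2$ closed under products and to prove $\C_1\vee\C_2$ is closed under arbitrary products.

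Let $(Y_\b)_{\b\in B}$ be a family in $\C_1\vee\C_2$. The plan is to pass to injective envelopes, split using Lemma~\ref{le:tf3}, reassemble via products, and then descend to $\prod_\b Y_\b$ as a subobject. Concretely, for each $\b$ the essential extension $Y_\b\hookrightarrow E(Y_\b)$ keeps $E(Y_\b)$ in $\C_1\vee\C_2$ (essential closure), and Lemma~\ref{le:tf3} yields a decomposition $E(Y_\b)=E_\b^{(1)}\oplus E_\b^{(2)}$ with $E_\b^{(i)}\in\C_i$.

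Next I would use two standard facts in a Grothendieck category: first, a product of injective objects is again injective, because $\Hom(-,\prod_\b E(Y_\b))\cong\prod_\b\Hom(-,E(Y_\b))$ is a product of exact functors of abelian groups, hence exact; second, the distributivity identity
\[
\prod_\b\bigl(E_\b^{(1)}\oplus E_\b^{(2)}\bigr)\;\cong\;\Bigl(\prod_\b E_\b^{(1)}\Bigr)\oplus\Bigl(\prod_\b E_\b^{(2)}\Bigr)
\]
in any additive category with products. By the product-closure hypothesis each $\prod_\b E_\b^{(i)}$ lies in $\C_i$, so their direct sum $\prod_\b E(Y_\b)$ lies in $\C_1\vee\C_2$.

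Finally, the canonical inclusion $\prod_\b Y_\b\hookrightarrow\prod_\b E(Y_\b)$ exhibits $\prod_\b Y_\b$ as a subobject of an object in $\C_1\vee\C_2$, and since $\C_1\vee\C_2$ is essentially closed, in particular closed under subobjects, we conclude $\prod_\b Y_\b\in\C_1\vee\C_2$. The only mildly delicate point is the binary decomposition step, which is handled cleanly by Lemma~\ref{le:tf3} applied to the injective object $E(Y_\b)$; the rest is formal manipulation of products in an abelian category.
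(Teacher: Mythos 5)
Your proof is correct and takes essentially the same approach as the paper's: reduce to injective objects via injective envelopes, split each injective using Lemma~\ref{le:tf3}, interchange the product with the finite direct sum, invoke the product-closure hypothesis on each $\C_\a$, and finally descend to the subobject $\prod_\b Y_\b$. The only cosmetic difference is that you reduce to the binary case by induction, while the paper handles a finite family directly.
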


\begin{proof}
  For the lattice of essentially closed subcategories the meet is given by
  taking intersections, and it is clear that being closed under
  products is preserved when taking arbitrary intersections. Now
  suppose the family $(\C_\a)$ is finite. Observe that an essentially closed
  subcategory $\C\subseteq\A$ is closed under products if and only if
  $\C\cap\Inj\A$ is closed under products.  For $\C=\bigvee_\a\C_\a$
  we note that $\C\cap\Inj\A$ consists of all objects
  $\coprod_\a X_\a$ with $X_\a\in\C_\a\cap\Inj\A$ for all $\a$, by
  Lemma~\ref{le:tf3}. For an arbitrary family of objects $(X_i)$ in
  $\C\cap\Inj\A$, the product
  \[\prod_i X_i=\prod_i \left(\coprod_\a X_{i,\a}\right)\cong \coprod_\a \left(\prod_i X_{i,\a}\right)\]
  belongs to $\C$ since $\prod_i X_{i,\a}$ belongs to $\C_\a$ for all
  $\a$.
\end{proof}

\begin{lem}\label{le:loc-inj}
  Let $\C\subseteq\A$ be localising. Then every injective object
  $X\in\A$ admits a decomposition $X=X'\oplus X''$ 
such that $t_\C(X)\to X'$ is an injective envelope and $t_\C(X'')=0$.
\end{lem}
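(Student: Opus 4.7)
The plan is the following. Given an injective object $X\in\A$, consider its subobject $t_\C(X)$, which exists as the maximal subobject of $X$ lying in $\C$ since $\C$ is localising. First I would form an injective envelope of $t_\C(X)$ inside $X$: using Zorn's lemma one chooses a maximal essential extension $X'\subseteq X$ of $t_\C(X)$, and maximality together with injectivity of the ambient object $X$ forces $X'$ itself to be injective; equivalently, $X'$ is an injective envelope of $t_\C(X)$. Since $X'$ is an injective subobject of $X$, the identity $X'\to X'$ extends to a retraction $X\to X'$, so $X'$ is a direct summand: write $X=X'\oplus X''$.

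The second step is to verify the two required properties. The inclusion $t_\C(X)\hookrightarrow X'$ is essential by construction, so $t_\C(X)\to X'$ is indeed an injective envelope. To see that $t_\C(X'')=0$, take any subobject $Y\subseteq X''$ with $Y\in\C$. Then $Y$ is also a subobject of $X$ lying in $\C$, hence $Y\subseteq t_\C(X)\subseteq X'$; combined with $Y\subseteq X''$ this gives $Y\subseteq X'\cap X''=0$. Therefore $t_\C(X'')=0$.

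There is no real obstacle: the argument rests only on the standard facts that localising subcategories have a torsion radical $t_\C$, that maximal essential extensions inside an injective object are injective envelopes, and that injective subobjects of injective objects split off as direct summands. The only place where one has to be a little careful is to note that the maximality of $t_\C(X)$ in $X$ is enough to control $t_\C(X'')$ because the latter is a subobject of $X$ as well; this is what makes the decomposition work without needing to invoke anything about $\Spec\A$ or the spectral category.
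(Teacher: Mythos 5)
Your proof is correct and follows essentially the same route as the paper: form an injective envelope of $t_\C(X)$ inside $X$ (the paper does this by extending the inclusion $t_\C(X)\to X$ to a monomorphism $E(t_\C(X))\to X$ and taking its image, which is equivalent to your Zorn's lemma construction), use injectivity to split it off, and then check $t_\C(X'')=0$. You spell out the last verification, which the paper leaves implicit, but the underlying argument is the same.
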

\begin{proof}
  The inclusion $t_\C(X)\to X$ induces a monomorphism $E(t_\C(X))\to X$.
One takes for $X'$ its image and for $X''$ its cokernel.
\end{proof}

\begin{lem}\label{le:torsion-pair}
  Let $\C\subseteq\A$ be localising. Then the essential extensions of
  objects in $\C$ form an essentially closed subcategory $\bar\C\subseteq\A$, and
  the assignment
  \[(\C,\D)\mapsto (P(\bar\C),P(\D))\]
  yields a map between the hereditary torsion pairs for $\A$ and
  $\Spec\A$, respectively.
\end{lem}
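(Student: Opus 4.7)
The plan is to prove the two claims in turn: first that $\bar\C$ is essentially closed, and second that $(P(\bar\C),P(\D))$ is a torsion pair in $\Spec\A$. Heredity is automatic because $\Spec\A$ is spectral (see Proposition~\ref{pr:Boole}).

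For the first claim, I would verify the three closure axioms directly. Closure under essential extensions is the transitivity of essential monomorphisms. Closure under subobjects uses the standard fact that if $X'\subseteq X$ is essential and $Y\subseteq X$, then $Y\cap X'$ is essential in $Y$ and lies in $\C$ (by the subobject closure of $\C$). Closure under coproducts uses that a coproduct of essential monomorphisms is essential, combined with the coproduct closure of $\C$. Alternatively one can identify $\bar\C=P^{-1}(P(\C))$ by means of \eqref{eq:Spec-iso} and invoke Lemma~\ref{le:spectral}(1).

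For the second claim, both $\bar\C$ and $\D$ are essentially closed (the latter by Lemma~\ref{le:tf1}), so Lemma~\ref{le:spectral}(2) shows that $P(\bar\C)$ and $P(\D)$ are localising in $\Spec\A$ and hence closed under direct summands. By Lemma~\ref{le:torsion} it then suffices to check that $P(\bar\C)\cap P(\D)=0$ and that every object of $\Spec\A$ has the form $P(X')\oplus P(X'')$ with $X'\in\bar\C$ and $X''\in\D$. For the intersection, any common object is $P(X)\cong P(W)$ with $X\in\bar\C$ and $W\in\D$; equation~\eqref{eq:Spec-iso} gives $E(X)\cong E(W)\in\D$, and any essential subobject of $X$ lying in $\C$ then belongs to $\C\cap\D=0$, forcing $X=0$. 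For the splitting, write $Y\in\Spec\A$ as $P(X)$ with $X$ injective and apply Lemma~\ref{le:loc-inj} to produce $X=X'\oplus X''$ with $t_\C(X)\hookrightarrow X'$ an injective envelope and $t_\C(X'')=0$; then $X'\in\bar\C$ by construction.

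The main obstacle I anticipate is the last step, showing $X''\in\D$. For this, any morphism $C\to X''$ with $C\in\C$ factors through its image, which lies in $\C$ since the localising subcategory $\C$ is closed under quotients; the image thus sits inside $t_\C(X'')=0$, so the morphism vanishes and $X''\in\C^\perp=\D$.
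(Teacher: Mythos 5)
Your proposal is correct and follows the same route as the paper's (rather terse) proof: check $\bar\C$ is essentially closed directly, then establish the torsion pair in $\Spec\A$ via the characterization in Lemma~\ref{le:torsion} and the decomposition of Lemma~\ref{le:loc-inj}. One minor caveat: your alternative identification $\bar\C=P^{-1}(P(\C))$ would, to invoke Lemma~\ref{le:spectral}(1), require first knowing that the isomorphism closure of $P(\C)$ is localising in $\Spec\A$, which is not obvious before $\bar\C$ has been shown to be essentially closed; but this is offered only as an alternative and your primary direct verification of the closure axioms is correct.
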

\begin{proof}
  It is easily checked that the essential extensions of objects in
  $\C$ form an essentially closed subcategory. Now let $(\C,\D)$ be a hereditary
  torsion pair for $\A$. From Lemmas~\ref{le:torsion} and \ref{le:loc-inj}
  it follows that $(P(\bar\C),P(\D))$ is a  hereditary
  torsion pair for $\Spec\A$. 
\end{proof}

\begin{thm}\label{th:spectral-localising}
  Let $\A$ be a Grothendieck category. There is a pair of maps

  \[\begin{tikzcd}
\bfL(\A)  \arrow[rightarrow,yshift=.75ex,rr,"\s"]
 	&& \arrow[rightarrow,yshift=-.75ex,ll,"\tau"] \bfL(\Spec\A)
\end{tikzcd}\]
given by $\s(\C)=\supp(\C)$ and $\tau(\U)={^\perp (P^{-1}(\U^\perp))}$
such that $\t\s=\id$. Moreover, $\s$ preserves arbitrary joins and finite meets. 
\end{thm}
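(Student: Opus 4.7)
The plan is to reduce each assertion to Theorem~\ref{th:spectral} and Lemma~\ref{le:torsion-pair}. Writing $\langle\C\rangle$ for the essentially closed hull of a subcategory $\C\subseteq\A$, my first step would be to establish the identity $\s(\C)=P(\langle\C\rangle)$ inside $\bfL(\Spec\A)$: the inclusion $\subseteq$ holds because $P(\langle\C\rangle)$ is localising by Lemma~\ref{le:spectral}(2) and contains $P(\C)$, while $\supseteq$ follows because $P^{-1}(\s(\C))$ is essentially closed by Lemma~\ref{le:spectral}(1) and contains $\C$, hence contains $\langle\C\rangle$.

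For the retraction identity $\tau\s=\id$, I would apply Lemma~\ref{le:torsion-pair} to the hereditary torsion pair $(\C,\C^\perp)$, producing the torsion pair $(P(\langle\C\rangle),P(\C^\perp))$ in $\Spec\A$; together with the identification above this gives $\s(\C)^\perp=P(\C^\perp)$. The crucial calculation is $P^{-1}(P(\C^\perp))=\C^\perp$: if $P(X)\cong P(Y)$ with $Y\in\C^\perp$, then \eqref{eq:Spec-iso} yields $E(X)\cong E(Y)$, and since $\C^\perp$ is essentially closed by Lemma~\ref{le:tf1} one obtains $X\subseteq E(X)\cong E(Y)\in\C^\perp$. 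Consequently $\tau(\s(\C))={}^\perp(\C^\perp)=\C$, the last equality recording that $\C$ is recovered from its hereditary torsion pair. The same torsion-pair argument simultaneously confirms that $\tau$ takes values in $\bfL(\A)$.

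For arbitrary joins, I would invoke Lemma~\ref{le:join}: any $X\in\bigvee_\a\C_\a$ equals $\sum_\a t_{\C_\a}(X)$ as a directed union of finite subsums. Since $P$ preserves directed unions, $P(X)$ appears as a directed union of objects in $\bigvee_\a\s(\C_\a)$; as localising subcategories are closed under directed unions, $P(X)\in\bigvee_\a\s(\C_\a)$. This gives $\s(\bigvee_\a\C_\a)\subseteq\bigvee_\a\s(\C_\a)$, and the reverse inclusion follows from monotonicity of $\s$.

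For finite meets, combining $\s(\C)=P(\langle\C\rangle)$ with the lattice isomorphism of Theorem~\ref{th:spectral} reduces $\s(\C_1\cap\C_2)=\s(\C_1)\cap\s(\C_2)$ to $\langle\C_1\cap\C_2\rangle=\langle\C_1\rangle\cap\langle\C_2\rangle$. Given $X\in\langle\C_1\rangle\cap\langle\C_2\rangle$, I would produce essential subobjects $Y_i\subseteq E(X)$ with $Y_i\in\C_i$ (by exhibiting $E(X)$ as a summand of an injective envelope $E(\coprod_j Y_j^{(i)})$ for some family $Y_j^{(i)}\in\C_i$, and projecting); then $Y_1\cap Y_2\in\C_1\cap\C_2$ stays essential in $E(X)$ because intersections of essential subobjects are essential, so $X\subseteq E(X)=E(Y_1\cap Y_2)\in\langle\C_1\cap\C_2\rangle$. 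I expect the subtlest point to be the identity $P^{-1}(P(\C^\perp))=\C^\perp$ underpinning $\tau\s=\id$, because it is where the mechanism by which $P$ identifies objects with common injective envelope must be combined with the essentially closed structure of the torsion-free class $\C^\perp$.
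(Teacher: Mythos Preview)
Your treatment of $\tau\sigma=\id$ matches the paper's: both invoke Lemma~\ref{le:torsion-pair} to pass to the torsion pair $(P(\bar\C),P(\C^\perp))$ in $\Spec\A$ and then verify $P^{-1}(P(\C^\perp))=\C^\perp$. For joins and meets, however, you argue directly, whereas the paper takes a dual route: it uses the order-reversing bijection $\C\mapsto\C^\perp$ onto the essentially closed subcategories that are closed under products (Lemma~\ref{le:tf1}), composes with the isomorphism of Theorem~\ref{th:spectral}, and appeals to Lemma~\ref{le:tf2} (resting on Lemma~\ref{le:tf3}) to see that this collection is closed under arbitrary meets and finite joins; after complementation in the Boolean lattice $\bfL(\Spec\A)$ this says the image of $\sigma$ is closed under arbitrary joins and finite meets, and preservation follows since $\sigma$ is an order-embedding with order-preserving retraction $\tau$. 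Your hands-on arguments avoid this detour and make the role of the localising hypothesis more visible; the paper's route packages both assertions into one closure lemma.

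There is one gap to fill. In your argument for arbitrary joins you assert that for each finite subsum $Y_F=\sum_{\a\in F}t_{\C_\a}(X)$ one has $P(Y_F)\in\bigvee_\a\sigma(\C_\a)$, but this is not automatic: $P$ is only left exact and need not commute with finite sums of subobjects. The repair is a short induction on $|F|$ using that the $\C_\a$ are closed under quotients. For $F=F'\cup\{\b\}$ the exact sequence $0\to Y_{F'}\to Y_F\to Y_F/Y_{F'}\to 0$ has third term a quotient of $t_{\C_\b}(X)$, hence in $\C_\b$; left exactness of $P$ then gives $P(Y_F)\cong P(Y_{F'})\oplus W$ in $\Spec\A$ with $W\hookrightarrow P(Y_F/Y_{F'})\in\sigma(\C_\b)$, and the induction goes through. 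Incidentally, your self-assessment is off: the identity $P^{-1}(P(\C^\perp))=\C^\perp$ is routine (it is the repleteness of $P(\C^\perp)$, already contained in the proof of Lemma~\ref{le:spectral}); the finite-sum step you glossed over is where closure under quotients actually earns its keep.
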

\begin{proof}
  First observe that $\t$ is well defined, since $P^{-1}(\U^\perp)$ is
  an essentially closed subcategory, and therefore ${^\perp(P^{-1}(\U^\perp))}$
  is a localising subcategory of $\A$.

  Let $\C\subseteq\A$ be localising and $(\C,\D)$ the corresponding
  hereditary torsion pair. Then $ (P(\bar\C),P(\D))$ is a hereditary
  torsion pair for $\Spec\A$ by Lemma~\ref{le:torsion-pair}, with
  $\U:=\s(\C)=P(\bar\C)$. Thus $\U^\perp=P(\D)$ and then
  $P^{-1}(\U^\perp)=P^{-1}P(\D)=\D$. It follows that $\t(\U)=\C$.

  The assignment $\C\mapsto\C^\perp$ identifies the localising
  subcategories of $\A$ with the essentially closed subcategories that are
  closed under products, by Lemma~\ref{le:tf1}. This map reverses
  inclusions and we compose it with  the isomorphism from
  Theorem~\ref{th:spectral}. Then it
  follows from Lemma~\ref{le:tf2} that the image of $\s$ is closed
  under  arbitrary joins and finite meets. 
\end{proof}

I am grateful to Sira Gratz and Greg Stevenson for suggesting the
following couple of remarks.

Given a frame $L$, its \emph{points} are the frame morphisms
$L\to\two$, where $\two$ denotes the frame with two elements
$\{0\le 1\}$.  The frame $L$ has \emph{enough points} if for all
$x\not\le y$ in $L$ there exists a point $p\colon L\to\two$ such that
$p(x)=1$ and $p(y)=0$.

\begin{rem}
  There are examples showing that the frame $\bfL(\A)$ may have no
  points \cite{St2024}. This is in contrast to
  Proposition~\ref{pr:Boole} which shows that the bigger frame
  $\bfL(\Spec\A)$ has enough points.
\end{rem}

\begin{rem}
  Suppose that $\Spec\A$ is discrete. For example, let $\A$ be locally
  noetherian. Then $\bfL(\A)$ has enough points, so it is a spatial
  frame. This is easily seen, because $\bfL(\Spec\A)$ identifies with the
  set of subsets of $\Sp\A$. For $I\in\Sp\A$ consider the point
  $p_I\colon\bfL(\A)\xto{\s}\bfL(\Spec\A)\xto{\pi_I}\two$, where
  $\pi_I(\U)=1$ iff $I\in \U$. Given a pair of localising subcategories
  $\C\not\subseteq\D$ of $\A$, one chooses
  $I\in\Sp\A\cap(\langle\C\rangle\smallsetminus\langle\D\rangle)$ and then $p_I(\C)=1$
  and $p_I(\D)=0$. 
\end{rem}

\section{Coproduct decompositions}\label{se:decomp}

Decompositions of objects in a Grothendieck have been studied in great
detail in the past. For instance, the theorems of Azumaya, Krull,
Remak, and Schmidt treat the decompositions into indecomposable
objects with local endomorphism rings.\footnote{Splittings or
  decompositions have been considered since ancient times. The Quran
  reports about a miraculous splitting of the moon [Quran~54:1-2], and
  the Bible about the splitting of the Red Sea [Exodus~14:21-22].} For
arbitrary objects the decomposition theory is far more complex.  In
this section we discuss a Boolean lattice which controls the
decompositions, at least for injective objects. There is probably no
efficient way to describe all decompositions. So we focus on
decompositions up to a certain equivalence relation, which reflects
the decompositions of essentially closed subcategories.

We fix a Grothendieck category $\A$ and recall that $P\colon\A\to\Spec\A$
denotes the functor that inverts all essential monomorphisms.

\subsection*{Coproduct decompositions via essentially closed subcategories}

A coproduct decomposition of an object in $\A$ induces via the
assignment $X\mapsto \langle X\rangle$ a decomposition in the lattice
of essentially closed subcategories. This correspondence works in both
directions as follows.

\begin{prop} \label{pr:decomp}
  Let $\A$ be a Grothendieck category and $X\in\A$ an object.
  \begin{enumerate}
  \item A decomposition $X=\coprod_\a X_\a$ in $\A$ induces
  a decomposition $\langle X\rangle=\bigvee_\a\langle
  X_\a\rangle$ into  essentially closed subcategories.
\item Suppose that $X$ is injective and let
  $\langle X\rangle=\bigvee_\a\C_\a$ be a decomposition into essentially closed
  subcategories. Then there are direct summands $X_\a\subseteq X$ such
  that $X=E(\coprod_\a X_\a)$ and $X_\a\in\C_\a$ for all $\a$.
  Moreover, $\langle X_\a\rangle=\C_\a$ for all $\a$ provided that
  $\C_\a\wedge\C_\b=0$ for all $\a\neq\b$.
\end{enumerate}
\end{prop}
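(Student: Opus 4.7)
For part~(1), the argument is purely formal. Each $X_\a$ is a direct summand of $X$, hence a subobject, so $\langle X_\a\rangle\subseteq\langle X\rangle$, giving $\bigvee_\a\langle X_\a\rangle\subseteq\langle X\rangle$. Conversely, $\bigvee_\a\langle X_\a\rangle$ contains every $X_\a$ and is closed under coproducts, so it contains $X=\coprod_\a X_\a$.

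The substance of the proposition is part~(2). My plan is to translate the whole situation to the spectral category via the isomorphism of Theorem~\ref{th:spectral} and then use the explicit description of joins in Proposition~\ref{pr:spec-join}. The hypothesis $\langle X\rangle=\bigvee_\a\C_\a$ becomes $\langle P(X)\rangle=\bigvee_\a P(\C_\a)$ in $\bfL(\Spec\A)$; since $\Spec\A$ is spectral, Proposition~\ref{pr:spec-join} yields a decomposition $P(X)=\coprod_\a Y_\a$ with $Y_\a\in P(\C_\a)$.

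The main obstacle is lifting this (possibly infinite) coproduct decomposition back to $\A$, because Lemma~\ref{le:spec-decomp-inj} only furnishes two-term lifts. The trick is that we do not need a simultaneous decomposition of $X$: for each individual index $\a$, applying Lemma~\ref{le:spec-decomp-inj} to the two-term splitting $P(X)=Y_\a\oplus\bigl(\coprod_{\b\ne\a}Y_\b\bigr)$ produces a direct summand $X_\a\subseteq X$ with $P(X_\a)=Y_\a$, whence $X_\a\in\C_\a$ by Theorem~\ref{th:spectral}. Assembling these inclusions gives a morphism $\iota\colon\coprod_\a X_\a\to X$ whose image under $P$ is the identity of $\coprod_\a Y_\a$, hence an isomorphism. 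Left exactness of $P$ forces $P(\Ker\iota)=0$, and since $P(M)=0$ implies $M=0$ (pass to the injective envelope and use the equivalence \eqref{eq:Spec}), $\iota$ is a monomorphism. To see $X=E(\Im\iota)$, write $X=E(\Im\iota)\oplus C$ using injectivity of $X$ and apply $P$: from $P(X)=\coprod_\a Y_\a$ and $P(E(\Im\iota))=\coprod_\a Y_\a$ we get $P(C)=0$, hence $C=0$.

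For the final assertion, assume $\C_\a\wedge\C_\b=0$ for $\a\ne\b$. Via Theorem~\ref{th:spectral} it suffices to prove $\langle Y_\a\rangle=P(\C_\a)$ in $\bfL(\Spec\A)$. One inclusion is clear since $Y_\a\in P(\C_\a)$. For the other, combine $\bigvee_\b P(\C_\b)=\langle P(X)\rangle=\bigvee_\b\langle Y_\b\rangle$ with the frame distributivity from Proposition~\ref{pr:lattice-loc} (which applies since $\bfL(\Spec\A)$ is Boolean by Proposition~\ref{pr:Boole}):
\[
P(\C_\a)=P(\C_\a)\wedge\bigvee_\b\langle Y_\b\rangle=\bigvee_\b\bigl(P(\C_\a)\wedge\langle Y_\b\rangle\bigr).
\]
For $\b\ne\a$ the summand lies in $P(\C_\a)\wedge P(\C_\b)=0$, and for $\b=\a$ it equals $\langle Y_\a\rangle$; this forces $P(\C_\a)=\langle Y_\a\rangle$, as desired.
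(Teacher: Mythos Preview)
Your proof is correct and follows essentially the same route as the paper: pass to $\Spec\A$ via Theorem~\ref{th:spectral}, decompose $P(X)$ there, and lift the summands back using Lemma~\ref{le:spec-decomp-inj}. The differences are organisational. The paper first reduces to the orthogonal case $\C_\a\wedge\C_\b=0$ via Lemma~\ref{le:BA-decomp} and then takes $X_\a$ with $P(X_\a)=t_{P(\C_\a)}(P(X))$, concluding $X=E(\coprod_\a X_\a)$ in one line from \eqref{eq:Spec-iso}; you instead quote Proposition~\ref{pr:spec-join} (which hides the same reduction) and then verify by hand that the assembled map $\iota$ is an essential monomorphism. Your argument for the ``moreover'' clause via frame distributivity is a nice explicit substitute for what the paper leaves implicit in its choice $P(X_\a)=t_{P(\C_\a)}(P(X))$.

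One small point to tighten: when you assert that $P(\iota)$ is ``the identity of $\coprod_\a Y_\a$'', you are using that the inclusion $X_\a\hookrightarrow X$ maps under $P$ to the summand inclusion $Y_\a\hookrightarrow P(X)$, not merely to \emph{some} monomorphism with source isomorphic to $Y_\a$. This is true, and it follows from the proof of Lemma~\ref{le:spec-decomp-inj} (the idempotent decomposition of $\id_X$ lifts that of $\id_{P(X)}$), but it is slightly more than the lemma's statement literally records; a one-line remark to that effect would remove any doubt.
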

\begin{proof}
  (1) is clear from the definitions. For (2) we may assume that
  $\C_\a\wedge\C_\b=0$ for all $\a\neq\b$ by replacing $\C_\a$ with a
  smaller subcategory if needed; see Lemma~\ref{le:BA-decomp}.
  Suppose first that $\A$ is spectral. Then there is an essentially
  unique decomposition $X=\coprod_\a X_\a$ in $\A$ with
  $\langle X_\a\rangle=\C_\a$ for all $\a$, by choosing
  $X_\a=t_{\C_\a}(X)$. This follows from Lemma~\ref{le:join}; see also
  Proposition~\ref{pr:spec-join}. When $\A$ is arbitrary and $X$ is
  injective we obtain a decomposition of $X$ by decomposing $P(X)$ in
  $\Spec\A$ and applying Theorem~\ref{th:spectral}.  More precisely,
  for each $\a$ we apply Lemma~\ref{le:spec-decomp-inj} and choose a
  direct summand $X_\a\subseteq X$ such that
  $P(X_\a)=t_{P(\C_\a)}(P(X))$.  Then the equality
  $X=E(\coprod_\a X_\a)$ follows from \eqref{eq:Spec-iso}.
\end{proof}

\begin{rem}
  The identity $X=E(\coprod_\a X_\a)$ in Proposition~\ref{pr:decomp}
  simplifies to $X=\coprod_\a X_\a$ when the index set is finite or
  when $\A$ is locally noetherian.
\end{rem}

\begin{rem}
  In part (2) of the above proposition, the assumption on $X$ to be
  injective cannot be removed. Consider for instance an indecomposable
  object $X$ of length $3$ with $\soc(X)=S_1\oplus S_2$ such that
  $S_1$ and $S_2$ are non-isomorphic simple objects. Then
  $\langle X\rangle=\langle S_1\rangle\vee \langle S_2\rangle$ and
  $\langle S_1\rangle\wedge \langle S_2\rangle=0$, but there is no
  decomposition $X=X_1\oplus X_2$ with
  $\langle X_i\rangle=\langle S_i\rangle$.
\end{rem}

\subsection*{The lattice of decompositions}

We introduce an equivalence relation on the direct summands
of an injective object in order to provide an efficient description of
all decompositions.

\begin{defn}
  Let $X$ be an injective object in $\A$. Given a pair of direct summands $U\subseteq X$ and
  $V\subseteq X$, we say they are \emph{essentially equivalent} if
  $\langle U\rangle=\langle V\rangle$.  Let $\bfD(X)$ denote the set
  of essential equivalence classes of direct summands of $X$, with partial order
  given by $U\le V$ if $\langle U\rangle\subseteq\langle V\rangle$.
\end{defn}

The following result establishes a connection to the decomposition
theory for von Neumann regular rings since $\End(X)/J(\End(X))$ has
this property. In particular, the elaborated theory of types applies
\cite{Ka1951,MvN1936}.

\begin{thm}\label{th:decomp}
  Given an injective object $X$ in $\A$, the partially ordered set $\bfD(X)$ of direct
  summands is a complete Boolean lattice. Moreover, there is a
  canonical lattice isomorphism
\[\bfD(X)\longiso\bfB(\End(X)/J(\End(X))).\]
\end{thm}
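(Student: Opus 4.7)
The plan is to reduce to the spectral category via the canonical functor $P\colon \A \to \Spec\A$, and then identify $\bfD(Y)$ for $Y := P(X)$ with the lattice $\bfL(\langle Y\rangle)$, so that Lemma~\ref{le:centre} delivers the final isomorphism with $\bfB(\bar\La)$ for $\bar\La := \End(X)/J(\End(X))$. The equivalence \eqref{eq:Spec} identifies $\bar\La$ with $\End(Y)$ in $\Spec\A$; by Lemma~\ref{le:spec-decomp-inj} every direct summand decomposition of $Y$ lifts to one of $X$; and by Theorem~\ref{th:spectral} the assignment $[U]\mapsto[P(U)]$ defines an order isomorphism $\bfD(X) \iso \bfD(Y)$, where $\bfD(Y)$ is interpreted analogously inside $\Spec\A$ (in which every object is injective, so the definition of $\bfD$ makes sense). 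Since $\bfB(\bar\La)$ is untouched by this reduction, it suffices to prove the theorem for $Y$ inside $\Spec\A$.

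Next I would exploit the structure of the localising subcategory $\langle Y\rangle \subseteq \Spec\A$. Since every short exact sequence in $\Spec\A$ splits, $\langle Y\rangle$ consists precisely of direct summands of coproducts of copies of $Y$, so $Y$ is a generator of the spectral Grothendieck category $\langle Y\rangle$. Lemma~\ref{le:centre} then yields a canonical lattice isomorphism $\bfL(\langle Y\rangle) \iso \bfB(\bar\La)$, while Proposition~\ref{pr:Boole} guarantees that $\bfL(\langle Y\rangle)$ is a complete Boolean lattice. The remaining task is to construct an order isomorphism $\bfD(Y) \iso \bfL(\langle Y\rangle)$ via $[U] \mapsto \langle U\rangle$.

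Well-definedness and injectivity of this assignment are built into the definition of essential equivalence. For surjectivity, given $\C \in \bfL(\langle Y\rangle)$ corresponding under Lemma~\ref{le:centre} to a central idempotent $\zeta \in \bfB(\bar\La)$, I would take $U := \zeta Y$. Centrality of $\zeta$ yields $\Hom(Y,U)\zeta = \zeta\bar\La\zeta = \zeta\bar\La = \Hom(Y,U)$, so $U \in \C$. Conversely, every $Z \in \C$ satisfies $f\zeta = f$ for each $f \in \Hom(Y,Z)$, so every morphism $Y \to Z$ factors through $\zeta Y = U$; since $Y$ generates $\langle Y\rangle$ and every exact sequence splits, $Z$ is then a direct summand of $U^{(I)}$ for some set $I$, whence $Z \in \langle U\rangle$ and $\C = \langle U\rangle$. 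The main obstacle is precisely this surjectivity step, which requires translating the abstract bijection of Lemma~\ref{le:centre} into the concrete identification of a canonical generator for $\C$; once it is in hand, the complete Boolean lattice structure on $\bfD(X)$ is transported along the chain $\bfD(X) \iso \bfD(Y) \iso \bfL(\langle Y\rangle) \iso \bfB(\bar\La)$.
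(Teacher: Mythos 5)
Your proposal is correct and follows essentially the same route as the paper: reduce to $Y = P(X)$ in $\Spec\A$ using Lemma~\ref{le:spec-decomp-inj} and Theorem~\ref{th:spectral}, identify $\bfD(Y)$ with $\bfL(\langle Y\rangle)$, and close with Lemma~\ref{le:centre} and Proposition~\ref{pr:Boole} exactly as in the chain $\bfD(X)\iso\bfD(P(X))\iso\bfL(\A_X)\iso\bfB(\End(X)/J(\End(X)))$. Your explicit central-idempotent computation for surjectivity just fills in the step the paper leaves as "easily checked" via Theorem~\ref{th:spectral}.
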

\begin{proof}
We claim that the assignment $U\mapsto \langle P(U)\rangle= P(\langle
U\rangle)$ induces a lattice isomorphism
\begin{equation}\label{eq:decomp}
  \bfD(X)\longiso\{\U\in\bfS(\A)\mid \U\subseteq\supp(X)\}.
\end{equation}  
The inverse map sends a localising subcategory $\U\subseteq\Spec\A$ to
the equivalence class of a direct summand $U\subseteq X$ such that
$P(U)=t_\U(P(X)$; for the existence of $U$ see
Lemma~\ref{le:spec-decomp-inj}. Using Theorem~\ref{th:spectral} it is
easily checked that these assignments are mutually inverse to each
other and preserving the partial order. Thus $\bfD(X)$ is a Boolean
lattice, since $\bfS(\A)$ has this property by
Proposition~\ref{pr:Boole}. Moreover, the above argument shows that
$P\colon\A\to\Spec\A$ induces a lattice  isomorphism
\begin{equation}\label{eq:decomp1}
  \bfD(X)\longiso\bfD(P(X)).
\end{equation}

For the isomorphism with the lattice of central idempotents we
consider the Grothendieck category
\[\A_X:=\langle P(X)\rangle\subseteq\Spec\A\] which is generated by
$P(X)$. The functor $P$ induces an isomorphism
\[\End(X)/J(\End(X))\longiso \End(P(X))\] by \eqref{eq:Spec}. This
 yields the following sequence of isomorphisms
 \[\bfD(X)\iso\bfD(P(X))\iso\bfL(\A_X)\iso\bfB(\End(X)/J(\End(X))),\]
 where the first is \eqref{eq:decomp1}, the second is \eqref{eq:decomp}, and the last comes from
 Lemma~\ref{le:centre}.
\end{proof}

\begin{rem}
  The Boolean structure of $\bfD(X)$ implies that each equivalence class
  contains a summand $U\subseteq X$ that is maximal in the sense that
  $\langle U\rangle\cap \langle X/U\rangle=0$. An equivalent condition
  is that for any pair of monomorphisms $U\leftarrow V\to X/U$ we have
  $V=0$.
\end{rem}

\section{Cohomologically stable subcategories}\label{se:coh-stable-sub}

An essentially closed subcategory of a Grothendieck category is always closed
under subobjects and extensions, but it need not be closed under
quotients.  For that reason we consider another class of subcategories
which is also controlled by its spectral category. This leads to a
notion of exact support for any Grothendieck category.

\subsection*{Exact support}

Let $\A$ be an exact category. Thus $\A$ is an additive category
together with a distinguished class of short exact sequences. Recall that
a full subcategory $\C\subseteq\A $  is \emph{thick} if $\C$
is closed under direct summands and the \emph{two out of three
  condition} holds: any short exact sequence belongs to $\C$ if two of
its three terms are in $\C$.

A map $\s\colon \Ob\A\to L$ with
values in a join-semilattice $L$ is called \emph{exact} if
\begin{enumerate}
  \item for all objects $X,Y\in\A$ one has $\s(X\oplus Y)=\s(X)\vee\s(Y)$, and
  \item for each exact sequence $0\to X_1\to X_2\to X_3\to 0$ in $\A$
    one has
    \[\s(X_i)\le\s(X_j)\vee\s(X_k)\qquad\text{when}\qquad\{i,j,k\}=\{1,2,3\}.\]
  \end{enumerate}
  It is easily checked that $\s$ is exact if and only if for each
  $u\in L$ \[\A_u:=\{X\in\A\mid\s(X)\le u\}\] is a thick subcategory of $\A$.

  Now let $\A$ be a Grothendieck category. We consider again the
  Boolean lattice
\[\bfS(\A):=\bfL(\Spec\A)\iso\Clop(\Spc\A).\] Also, we consider the derived
  category $\bfD(\A)$ and the right derived functor
\[\bfR P\colon\bfD(\A)\lto\bfD(\Spec\A).\]
For an object $X\in\A$ its \emph{exact support} is by definition
\begin{equation}\label{eq:suppex}
  \suppex(X):=\inf\{\U\in\bfS(\A)\mid \bfR
  P(X)\in\bfD(\U)\}\in\bfS(\A),
\end{equation}  
where we identify
\[\bfD(\U)=\{Y\in\bfD(\Spec\A)\mid  H^n(Y)\in\U\text{ for all }n\in\bbZ\}.\]
For $\X\subseteq\A$ we set $\suppex(\X):=\bigvee_{X\in\X}\suppex(X)$.

\begin{lem}\label{le:res}
Let $X\in\A$ be an object and $X\to I$ a minimal injective resolution. Then
$\suppex(X)$ equals the localising subcategory of $\Spec\A$ consisting of all direct
summands of coproducts of objects in $\{P(I^n)\mid n\in\bbZ\}$.
\end{lem}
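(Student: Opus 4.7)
The plan is to show that, because the resolution is minimal, every differential of the complex $P(I)$ vanishes. Once this is established, $R^nP(X)=P(I^n)$, so that $\suppex(X)$ is identified with the smallest localising subcategory of $\Spec\A$ containing every $P(I^n)$, and the description as direct summands of coproducts then falls out of the spectral structure of $\Spec\A$.

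First I would exploit minimality. By construction, $\ker(d^n)\hookrightarrow I^n$ is essential for every $n$: for $n=0$ this is $X\hookrightarrow I^0=E(X)$, and for $n\ge 1$ it is the inclusion of $\operatorname{im}(d^{n-1})$ into its injective envelope $I^n$. Since $P$ is left exact it preserves kernels, giving $P(\ker d^n)=\ker(P(d^n))$; since $P$ inverts essential monomorphisms, the induced map $P(\ker d^n)\to P(I^n)$ is an isomorphism. Combining the two identifications yields $\ker(P(d^n))=P(I^n)$, forcing $P(d^n)=0$.

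Because $I$ is an injective resolution, $\bfR P(X)$ is computed by the complex $P(I)$ in $\bfD(\Spec\A)$. With vanishing differentials, $H^n(P(I))=P(I^n)$, so $R^nP(X)=P(I^n)$ for every $n\in\bbZ$ (with $P(I^n)=0$ for $n<0$). Thus the condition $\bfR P(X)\in\bfD(\U)$ amounts to $P(I^n)\in\U$ for all $n$, and
\[
  \suppex(X)=\inf\{\U\in\bfS(\A)\mid P(I^n)\in\U\text{ for all }n\in\bbZ\}
\]
is just the localising subcategory $\langle P(I^n)\mid n\in\bbZ\rangle$ of $\Spec\A$. Finally, since $\Spec\A$ is spectral every short exact sequence splits, so subobjects and quotients are direct summands and extensions are direct sums; Proposition~\ref{pr:spec-join} moreover realises joins in $\bfL(\Spec\A)$ as coproducts. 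Consequently the class of direct summands of coproducts of objects in $\{P(I^n)\}$ is already closed under coproducts, subobjects, quotients, and extensions, hence coincides with $\langle P(I^n)\mid n\in\bbZ\rangle$.

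The main obstacle is the vanishing step, which relies decisively on minimality: for a general injective resolution the $P(d^n)$ need not vanish, and one only obtains $R^nP(X)$ as a subquotient of $P(I^n)$. Everything else is formal combinatorics in the Boolean lattice $\bfL(\Spec\A)$.
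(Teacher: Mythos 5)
Your proof is correct, and it achieves the key step — the vanishing of the differentials $P(d^n)$ — by a genuinely different (and arguably more self-contained) argument than the paper's. The paper appeals to the fact that the differentials in a homotopically minimal complex of injectives are \emph{radical} morphisms and cites \cite[Proposition~4.3.18]{Kr2022}, so that $P$ kills them via the identification $(\Inj\A)/\Rad(\Inj\A)\iso\Spec\A$ from \eqref{eq:Spec}. You instead use only the two structural facts that $P$ is left exact (so it preserves kernels) and that $P$ inverts essential monomorphisms (so the essential inclusion $\ker(d^n)\hookrightarrow I^n$ becomes an isomorphism), which together force $\ker(P(d^n))=P(I^n)$ and hence $P(d^n)=0$. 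This buys you a more elementary proof that stays entirely at the level of the defining properties of $P$ rather than invoking the radical characterisation of the spectral category; the paper's approach, in turn, is more in line with its later use of homotopically minimal complexes in $\bfK(\Inj\A)$, where the same radical-morphisms observation is reused. Your closing paragraph identifying $\langle P(I^n)\mid n\in\bbZ\rangle$ with direct summands of coproducts is correct and matches the paper's implicit use of the spectral structure.
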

\begin{proof}
  The functor $P$ sends each differential $I^n\to I^{n+1}$ to zero
  because they are radical morphisms; see
  \cite[Proposition~4.3.18]{Kr2022}. Thus $H^n(\bfR P(X))\cong P(I^n)$
  for each $n\in\bbZ$.
\end{proof}

Let $\U\in\bfS(\A)$.  We set
\[\Inj_\U\A:=\{I\in\Inj\A\mid P(I)\in\U\}\]
and $\A_\U$ denotes the full subcategory of objects $X\in\A$ that
admit an injective resolution $X\to I$ with $I^n\in\Inj_\U\A$ for all
$n\in\bbZ$, so
\[\A_\U=\{X\in\A\mid \bfR
  P(X)\in\bfD(\U)\}=\{X\in\A\mid\suppex(X)\subseteq\U\}.\]

\begin{lem}\label{le:thick}
  For each $\U\in\bfS(\A)$ the subcategory $\A_\U$ is thick, and the
  function \[\suppex\colon\Ob\A\lto\bfS(\A)\] is exact.
\end{lem}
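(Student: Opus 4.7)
The plan is to first prove thickness of $\A_\U$ directly from its definition as $\{X\in\A\mid\bfR P(X)\in\bfD(\U)\}$, and then read off exactness of $\suppex$ from the characterisation $\A_\U=\{X\mid\suppex(X)\subseteq\U\}$ already noted just before the lemma.

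For thickness, the key fact I would use is that $\bfR P$ sends a short exact sequence $0\to X_1\to X_2\to X_3\to 0$ in $\A$ to a distinguished triangle $\bfR P(X_1)\to\bfR P(X_2)\to\bfR P(X_3)\to\Sigma\bfR P(X_1)$ in $\bfD(\Spec\A)$; this is the standard consequence of the horseshoe lemma applied to injective resolutions. Taking cohomology yields a long exact sequence in $\Spec\A$, and since $\U$ is a localising subcategory of $\Spec\A$ (so in particular closed under subobjects, quotients and extensions), the two out of three condition in $\Spec\A$ for membership of all $H^n$ in $\U$ transfers to a two out of three condition for membership in $\A_\U$. Closure of $\A_\U$ under direct summands is immediate, because $\bfR P$ is additive and $\U$ is closed under subobjects (hence under direct summands in $\Spec\A$), so a summand $X'$ of $X\in\A_\U$ gives $H^n(\bfR P(X'))$ as a summand of $H^n(\bfR P(X))\in\U$.

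With thickness in hand, exactness of $\suppex$ is formal. For additivity under $\oplus$, the identity $\bfR P(X\oplus Y)\cong\bfR P(X)\oplus\bfR P(Y)$ shows that $\suppex(X\oplus Y)$ is the smallest $\U\in\bfS(\A)$ containing both $\suppex(X)$ and $\suppex(Y)$, which is $\suppex(X)\vee\suppex(Y)$. For the exact-sequence inequality, set $\U:=\suppex(X_j)\vee\suppex(X_k)$; then $X_j,X_k\in\A_\U$, and thickness of $\A_\U$ forces $X_i\in\A_\U$, i.e.\ $\suppex(X_i)\subseteq\U$, which is the required inequality.

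The only subtle point I anticipate is justifying the passage from the short exact sequence in $\A$ to a distinguished triangle of $\bfR P$-images; since $\A$ has enough injectives and $\bfR P$ is the classical right derived functor, this is standard, but one might prefer to invoke Lemma~\ref{le:res} instead and argue more concretely with a minimal injective resolution of the middle term (which restricts to resolutions of kernel and cokernel up to contractible summands), so that the cohomology objects are expressed in terms of $P$ applied to the injective terms. Either formulation reduces the proof to the closure properties of the localising subcategory $\U\subseteq\Spec\A$, and no further ingredients are needed.
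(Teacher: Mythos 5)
Your proof is correct and follows essentially the same approach as the paper. The paper states it more compactly: a short exact sequence in $\A$ gives an exact triangle in $\bfD(\A)$, the derived functor $\bfR P$ is exact as a triangulated functor, and $\bfD(\U)$ is a thick triangulated subcategory of $\bfD(\Spec\A)$, whence $\A_\U$ is thick; exactness of $\suppex$ then follows from the observation just preceding the lemma that $\s$ is exact if and only if every $\A_u$ is thick. Your long exact sequence argument is precisely the verification that $\bfD(\U)$ is triangulated, so you have simply unpacked the step that the paper takes as given, and your explicit derivation of (E$\vee$) and the exact-sequence inequality re-proves one direction of that earlier remark rather than citing it. Both routes rely on the same two ingredients (the triangle from $\bfR P$ and the Serre-subcategory properties of $\U$), and no gap remains.
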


\begin{proof}
  Each exact sequence $0\to X\to Y\to Z\to 0$ in $\A$ gives rise to an
  exact triangle $X\to Y\to Z\to X[1]$ in $\bfD(\A)$. The right
  derived functor $\bfR P$ is exact and $\bfD(\U)$ is a triangulated
  subcategory which is closed under direct summands. Then it follows
  that $\A_\U$ is thick. We have already seen that thickness of
  $\A_\U$ for each $\U$ implies the exactness of the corresponding
  support function.
\end{proof}

\subsection*{Cohomologically stable subcategories}

The following definition is the derived analogue of Definition~\ref{de:spectral-subcat}.

\begin{defn}\label{de:stable-subcat}
  We call a thick subcategory $\C\subseteq\A$ 
  \emph{cohomologically stable} if
  \begin{enumerate}
\item for every family $(X_\a)$ of objects in $\C$ the injective
  envelope of  $\coprod_\a X_\a$ belongs to $\C$, and
\item for every exact sequence $X^0\to X^1\to X^2\to\cdots$ with all
  $X^n$ in $\C$ the kernel of $X^0\to X^1$ belongs to $\C$.
\end{enumerate}
\end{defn}

The cohomologically stable subcategories are partially ordered by
inclusion and closed under arbitrary intersections; so they form a
complete lattice. When $\A$ is spectral, then its cohomologically stable
subcategories are precisely its localising subcategories. For an
object $X\in\A$ let $\langle X\rangle_\ex$ denote the smallest
cohomologically stable subcategory of $\A$ that contains $X$.

\begin{lem}\label{le:U}
  For $\U\in\bfS(\A)$ the subcategory $\A_\U$ is cohomologically stable.
\end{lem}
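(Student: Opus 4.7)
My plan is to verify the two axioms of Definition~\ref{de:stable-subcat} for $\A_\U$, since thickness is already provided by Lemma~\ref{le:thick}.

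For axiom (1), let $(X_\a)$ be a family in $\A_\U$ and set $Y:=E(\coprod_\a X_\a)$. As $Y$ is already injective, its minimal injective resolution is $Y$ concentrated in degree $0$, so by Lemma~\ref{le:res} the task reduces to checking $P(Y)\in\U$. Since $\coprod_\a X_\a\hookrightarrow Y$ is an essential monomorphism it is inverted by $P$, and since $P$ preserves coproducts one has $P(Y)\cong\coprod_\a P(X_\a)$. Each $P(X_\a)=H^0(\bfR P(X_\a))$ lies in $\U$ because $X_\a\in\A_\U$, and $\U$ is closed under coproducts being a localising subcategory of $\Spec\A$; hence $P(Y)\in\U$ and $Y\in\A_\U$.

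For axiom (2), given an exact sequence $0\to K\to X^0\to X^1\to X^2\to\cdots$ with each $X^n\in\A_\U$, view $X^\bullet$ as a first-quadrant complex in $\bfD(\A)$ that is quasi-isomorphic to $K$ placed in degree $0$. I would then appeal to the hypercohomology spectral sequence
\[E_1^{p,q}=\bfR^q P(X^p)\Longrightarrow \bfR^{p+q}P(K),\]
arising from a Cartan--Eilenberg resolution of $X^\bullet$ by injectives. By hypothesis every $E_1^{p,q}\in\U$; as $\U$ is closed under subobjects, quotients, and extensions, one deduces $E_\infty^{p,q}\in\U$, and then each abutment $\bfR^n P(K)\in\U$. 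Thus $\bfR P(K)\in\bfD(\U)$, i.e., $K\in\A_\U$.

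The main obstacle is controlling the infinite resolution in axiom (2). The key enabling feature is that the Cartan--Eilenberg bicomplex built from injective resolutions of the $X^p$ is first-quadrant, so its totalisation involves only finite sums of injectives in each degree and is therefore a legitimate injective resolution of $K$. This is crucial, since in a general Grothendieck category arbitrary coproducts of injectives need not remain injective; it is precisely the first-quadrant shape that makes the spectral sequence converge and transports membership in $\U$ from the $X^p$ to $K$.
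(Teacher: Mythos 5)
Your argument is correct, but for axiom~(2) it takes a genuinely different route from the paper's. For axiom~(1) you and the paper make essentially the same move: the injective envelope of the coproduct has the same image under $P$ as $\coprod_\a P(X_\a)$ because $P$ inverts essential monomorphisms and preserves coproducts, and localising subcategories are closed under coproducts. For axiom~(2) the paper proceeds by an explicit elementary induction: it replaces $X^0$ by its injective envelope $I^0\in\Inj_\U\A$, forms a pushout to obtain a new exact sequence $I^0\to\bar X^1\to X^2\to\cdots$ with $\bar X^1\in\A_\U$ (using the short exact sequence $0\to X^1\to\bar X^1\to\Coker(X^0\to I^0)\to 0$ and thickness of $\A_\U$), and iterates to build an actual injective resolution $I^0\to I^1\to\cdots$ of $Z^0$ with terms in $\Inj_\U\A$. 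You instead invoke the Cartan--Eilenberg hyper-derived functor spectral sequence $E_1^{p,q}=\bfR^qP(X^p)\Rightarrow\bfR^{p+q}P(K)$ and transport membership in $\U$ from $E_1$ to the abutment using that $\U$ is closed under subquotients and extensions. This is valid: a first-quadrant double complex has a convergent spectral sequence, its totalisation uses only finite direct sums (so it really is a complex of injectives — a crucial point you rightly flag, since infinite coproducts of injectives need not be injective in a general Grothendieck category), and since $X^p\in\A_\U$ one indeed has $\bfR^qP(X^p)\in\U$ for all $q$. The trade-off: the paper's proof is self-contained and produces a concrete witnessing resolution of $Z^0$; your proof is shorter once the hypercohomology spectral sequence is taken as known, but it appeals to heavier machinery than the surrounding text otherwise uses. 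Both reach the same conclusion.
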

\begin{proof}
  The category $\A_\U$ is thick by Lemma~\ref{le:thick}.  Let $(X_\a)$
  be a family of objects in $\A_\U$ and choose injective envelopes
  $X_\a\to I_\a$. Then $I_\a\in\Inj_\U\A$ for all $\a$, and therefore
  $E(\coprod_\a X_\a)\cong E(\coprod_\a I_\a)$ belongs to
  $\Inj_\U\A$. Now let \[X^0\to X^1\to X^2\to\cdots\] be exact with all
  $X^n$ in $\A_\U$. We set $Z^n=\Ker (X^n\to X^{n+1})$ and need to
  show that $Z^0$ belongs to $\A_\U$. Choose an injective envelope
  $X^0\to I^0$. The cokernel $C=\Coker(Z^0\to I^0)$ fits into an exact
  sequence $0\to Z^1\to C\to C'\to 0$ with $C'=\Coker(X^0\to I^0)$ in
  $\A_\U$. We form the pushout
 \[\begin{tikzcd}
      0\arrow[r]&Z^1\arrow[r]\arrow[d]&X^1\arrow[r]\arrow[d]&
      Z^2\arrow[r]\arrow[d,equal]&0\\
   0\arrow[r]&C\arrow[r]&\bar X^1\arrow[r]&
      Z^2\arrow[r]&0
    \end{tikzcd}\]
and the object $\bar X^1$ belongs to $\A_\U$. Thus we obtain an exact sequence
 \[I^0\to \bar X^1\to X^2\to X^3\to \cdots\] with all
 terms in $\A_\U$ and $Z^0=\Ker (I^0\to\bar X^1)$.
Proceeding by induction we obtain 
 an exact sequence
 \[I^0\to I^1\to I^2\to I^3\to \cdots\] with all terms in $\Inj_\U\A$
 and $Z^0=\Ker (I^0\to I^1)$.  Thus $Z^0$ belongs to
 $\A_\U$.
\end{proof}

\begin{thm}\label{th:cohstable}
  Let $\A$ be a Grothendieck category. The assignment
  $\C\mapsto\suppex(\C)$ induces a lattice isomorphism
\[\{\C\subseteq\A\mid \C\text{ cohomologically stable}\}\longiso\bfS(\A).\]
The inverse map is given by  $\U\mapsto\A_\U$.
\end{thm}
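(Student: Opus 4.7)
The strategy is to show that the two assignments $\C\mapsto\suppex(\C)$ and $\U\mapsto\A_\U$ are mutually inverse; both visibly preserve inclusions, so as an order-preserving bijection between complete lattices this automatically yields the claimed lattice isomorphism. Lemma~\ref{le:U} already ensures $\A_\U$ is cohomologically stable, so the map $\U\mapsto\A_\U$ is well defined.

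For the composite $\U\mapsto\A_\U\mapsto\suppex(\A_\U)$, the inclusion $\suppex(\A_\U)\subseteq\U$ is immediate since each $X\in\A_\U$ satisfies $\suppex(X)\subseteq\U$ by construction. For the reverse inclusion I invoke \eqref{eq:Spec}: every object of $\U\subseteq\Spec\A$ is isomorphic to $P(I)$ for some injective $I\in\A$, and such $I$ then lies in $\A_\U$ with $\suppex(I)=\langle P(I)\rangle$, so $P(I)\in\suppex(\A_\U)$. Since $\U$ is generated by its objects, this yields $\U\subseteq\suppex(\A_\U)$.

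The harder composite $\C\mapsto\suppex(\C)\mapsto\A_{\suppex(\C)}$ reduces via the identification
\[
  P(\C\cap\Inj\A)=\suppex(\C)
\]
as localising subcategories of $\Spec\A$. For the inclusion $\supseteq$, one first checks that $P(\C\cap\Inj\A)$ is itself a localising subcategory of $\Spec\A$: closure under coproducts uses condition~(1) in Definition~\ref{de:stable-subcat} together with the fact that $P(E(\coprod_\a I_\a))\cong\coprod_\a P(I_\a)$; closure under subobjects in $\Spec\A$ amounts to closure under direct summands, and any splitting of $P(I)$ in $\Spec\A$ lifts by Lemma~\ref{le:spec-decomp-inj} to a splitting of $I$ in $\A$, which is then stable under thickness of $\C$. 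For every $Y\in\C$ with minimal injective resolution $Y\to J$, an inductive argument alternating thickness (applied to the cosyzygies) with condition~(1) shows $J^n\in\C\cap\Inj\A$ for all $n$, so Lemma~\ref{le:res} gives $\suppex(Y)\subseteq P(\C\cap\Inj\A)$. The reverse inclusion $\subseteq$ is immediate from $\suppex(I)=\langle P(I)\rangle\subseteq\suppex(\C)$ for $I\in\C\cap\Inj\A$.

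Granting this, fix $X\in\A_{\suppex(\C)}$ with minimal injective resolution $X\to I$. Each $P(I^n)$ then lies in $\suppex(\C)=P(\C\cap\Inj\A)$, so by \eqref{eq:Spec-iso} each $I^n$ is isomorphic to an injective object of $\C$ and therefore belongs to $\C$. Condition~(2) of Definition~\ref{de:stable-subcat}, applied to the exact sequence $I^0\to I^1\to I^2\to\cdots$, finally yields $X=\Ker(I^0\to I^1)\in\C$. The main obstacle is precisely the identification $P(\C\cap\Inj\A)=\suppex(\C)$: this is where both defining conditions of cohomologically stable are used essentially, combined with the $\Spec$-to-$\A$ lifting provided by Lemma~\ref{le:spec-decomp-inj} and \eqref{eq:Spec-iso}.
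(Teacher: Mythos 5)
Your proof is correct and follows essentially the same route as the paper: establish $\suppex(\A_\U)=\U$ from Lemma~\ref{le:res}, then show every cohomologically stable $\C$ satisfies $\Inj_{\suppex(\C)}\A\subseteq\C$ (phrased by you as the equality $P(\C\cap\Inj\A)=\suppex(\C)$), using condition~(1) to push cosyzygies into $\C$, Lemma~\ref{le:spec-decomp-inj} together with \eqref{eq:Spec-iso} to lift summands from $\Spec\A$ back to $\A$, and condition~(2) to recover $X$ from its injective resolution. Your version is slightly more explicit than the paper's (which only states the claim $\Inj_\U\A\subseteq\C$ and compresses the final step), but the key ideas and the use of the cited lemmas coincide.
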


\begin{proof}
  Lemma~\ref{le:U} shows that the assignment $\U\mapsto \A_\U$ is well
  defined. Using the description of support in Lemma~\ref{le:res}, it
  is clear that $\suppex(\A_\U)=\U$. Now let $\C\subseteq\A$ be a
  cohomologically stable subcategory and set $\U=\suppex(\C)$. We claim that
  $\Inj_\U\A\subseteq\C$. When $X\to I$ is a minimal
  injective resolution of an object in $X\in\C$, then the definition
  of cohomologically stable implies that $I^n\in\C$ for all
  $n\in\bbZ$. Moreover, cohomologically stable implies that any object
  $I\in\Inj\A$ belongs to $\C$ when $P(I)$ arises as a direct summand
  of a coproduct of objects of the form $P(I^n)$. Here we use again
  that $P$ preserves coproducts. This yields the claim, and the
  equality $\A_\U=\C$ then follows.
\end{proof}

\begin{rem}
  When $\A$ is locally noetherian, then a cohomologically stable
  subcategory is closed under all coproducts. This follows from the
  fact that any coproduct of injectives is again injective. Moreover,
  in this case $\bfS(\A)$ identifies with the lattice of subsets of
  $\Sp\A$; see Example~\ref{ex:noeth}.
\end{rem}

\begin{rem}\label{re:cohstable}
  In this generality Theorem~\ref{th:cohstable} seems to be new. For
  some special cases it was already known that subcategories of $\A$
  are determined by subsets of $\Sp\A$.

  (1) When $\A$ equals the category of modules over a commutative
  noetherian ring $\La$, then $\bfS(\A)$ identifies with the lattice
  of subsets of the prime ideal spectrum of $\La$; see
  Example~\ref{ex:noeth}. In this case the localising subcategories of
  $\A$ are cohomologically stable and correspond to the specialisation
  closed subsets of $\Spec\La$.  This goes back to work of Gabriel in
  \cite{Ga1962}; it was extended by Takahashi to a correspondence for
  all cohomologically stable subcategories in \cite{Ta2009} and more
  recently in \cite{MT2022}, using Neeman's classification of
  localising subcategories of the derived category $\bfD(\A)$ from
  \cite{Ne1992}. A generalisation of Gabriel's correspondence for
  locally noetherian Grothendieck categories was established in
  \cite{WM2024}.

(2) For a locally coherent Grothendieck category $\A$, the localising
subcategories of finite type are classified in terms of $\Sp\A$; see
\cite{He1997,Kr1997}. This is somewhat orthogonal to Gabriel's
correspondence.  More precisely, a localising subcategory
$\C\subseteq\A$ is of \emph{finite type} when the corresponding
subcategory of torsion-free objects
\[\C^\perp=\{X\in\A\mid \Hom(C,X)=0\text{ for all
  }C\in\C\}\] is closed under filtered colimits. The subcategory
$\C^\perp$ is an essentially closed subcategory, and both $\C^\perp\cap\Sp\A$ and
$P(\C^\perp)\in\bfS(\A)$ determine $\C$, while $\C$ need not be
cohomologically stable; cf.\ Theorem~\ref{th:spectral-localising}.
\end{rem}

\subsection*{Support data}

The following is the analogue of Definition~\ref{de:support}. The only
difference appears in the last condition (E$\wedge$) where the
intersection is based on cohomologically stable subcategories, whereas
in condition (S$\wedge$) essentially closed subcategories are used. 

\begin{defn}
An \emph{exact support datum} on a Grothendieck category $\A$ is a pair
$(T,\s)$ consisting of a topological space $T$ and a map
$\s\colon\Ob\A\to \Clop(T)$ such that
\begin{enumerate}
\item[(E$0$)] $\s(X)=\varnothing$ for $X=0$,
\item[(E$1$)] $\s(X)=T$ for every generator $X$ of $\A$,
\item[(E$\vee$)] $\s(X)\cup\s(Y)=\s(X\oplus Y)$ for all objects $X,Y$ in
  $\A$, and
\item[(E$\wedge$)] $\s(X)\cap\s(Y)=\s(Z)$ for all objects $X,Y,Z$ in $\A$
  with $\langle X\rangle_\ex\cap \langle Y\rangle_\ex=\langle Z\rangle_\ex$.
\end{enumerate}
A morphism $(T,\s)\to (T',\s')$ is given by a continuous map $f\colon
T\to T'$ such that $\s(X)=f^{-1}(\s'(X))$ for all $X\in\A$.
\end{defn}

The following is the exact analogue of Theorem~\ref{th:support}; it has the same proof.

\begin{thm}
  \pushQED{\qed}
  The pair $(\Spc\A,\suppex)$ is an exact support datum on $\A$. Moreover,
  for any exact support datum $(T,\s)$ there is a unique continuous map
  $f\colon T\to\Spc\A$ such that $\s(X)=f^{-1}(\suppex(X))$ for all
  $X\in\A$. Explicitly, the map $f$ is given by the adjunction
  \eqref{eq:stone} and therefore
  \[f(p)=\{\langle X\rangle\in\bfS(\A)\mid p\not\in\s(X)\}\qquad\text{for
    }p\in T.\qedhere\] 
\end{thm}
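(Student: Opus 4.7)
The plan is to follow the proof of Theorem~\ref{th:support} line by line, substituting cohomologically stable subcategories for essentially closed ones and Theorem~\ref{th:cohstable} for Theorem~\ref{th:spectral}. First I would verify that $(\Spc\A,\suppex)$ is an exact support datum. Axioms (E$0$), (E$\vee$), and (E$\wedge$) are immediate consequences of Theorem~\ref{th:cohstable}, which makes $\C \mapsto \suppex(\C)$ a lattice isomorphism and identifies $\suppex(X)$ with $\suppex(\langle X \rangle_\ex)$; this reduces (E$\vee$) to the identity $\langle X \oplus Y \rangle_\ex = \langle X \rangle_\ex \vee \langle Y \rangle_\ex$ and builds (E$\wedge$) into the correspondence. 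For (E$1$) it suffices to observe that if $G$ is a generator of $\A$ then $P(G)$ is a generator of $\Spec\A$ as a localising subcategory---visible for instance through the equivalence $\Spec\A \simeq \Inj_{\mathrm{ns}} \End(G)$---so that $\suppex(G) = \langle P(G) \rangle = \Spec\A$.

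Next I would establish the exact analogue of Corollary~\ref{co:gen}: every cohomologically stable subcategory $\C \subseteq \A$ is of the form $\langle X \rangle_\ex$ for a single object $X$. Given $\C$ with $\U := \suppex(\C)$, I would apply Corollary~\ref{co:gen} inside $\Spec\A$ to write $\U = \langle Y \rangle$ for some $Y \in \Spec\A$. Since $P$ is the identity on objects, $Y$ may be viewed in $\A$ and replaced by its injective envelope $X := E(Y)$, which has the same image under $P$. Because $X$ is injective, Lemma~\ref{le:res} gives $\suppex(X) = \langle P(X) \rangle = \langle Y \rangle = \U = \suppex(\C)$, and the bijection from Theorem~\ref{th:cohstable} then forces $\langle X \rangle_\ex = \C$.

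With the generation claim in hand, the analogue of Lemma~\ref{le:support-hom} follows: for any exact support datum $(T,\s)$, axiom (E$\wedge$) applied with $Z = X$ shows that $\langle X \rangle_\ex \subseteq \langle Y \rangle_\ex$ implies $\s(X) \subseteq \s(Y)$, so the rule $\langle X \rangle_\ex \mapsto \s(X)$ yields a well-defined lattice homomorphism from $\{\C \subseteq \A \mid \C \text{ cohomologically stable}\}$ to $\Clop(T)$. Composing with the isomorphism from Theorem~\ref{th:cohstable} yields a lattice homomorphism $\bfS(\A) \to \Clop(T)$, which by the adjunction \eqref{eq:stone} corresponds to the unique continuous map $f \colon T \to \Spec(\bfS(\A)) = \Spc\A$ with the stated explicit description.

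The main obstacle I expect is the generation claim above: unlike the essentially closed case, $\A_\U$ need not be closed under arbitrary coproducts, so one cannot simply take a Grothendieck generator of $\A_\U$ qua Grothendieck category. The injective-lifting trick sidesteps this by exploiting Lemma~\ref{le:res}---an injective object has exact support equal to its ordinary support---making it possible to realise a prescribed element of $\bfS(\A)$ as $\suppex(X)$ for a single $X$.
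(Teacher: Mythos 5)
Your plan correctly expands the paper's terse proof, which says only that this theorem ``has the same proof'' as Theorem~\ref{th:support} with Theorem~\ref{th:cohstable} and cohomologically stable subcategories substituted throughout. The genuinely new ingredient you supply---and the one the paper leaves silent---is the exact analogue of Corollary~\ref{co:gen}, that every cohomologically stable subcategory of $\A$ is of the form $\langle X\rangle_\ex$, which is what makes the analogue of Lemma~\ref{le:support-hom} run. Your injective-envelope argument for it is correct: given $\C$ with $\U=\suppex(\C)$, Corollary~\ref{co:gen} applied in $\Spec\A$ gives $\U=\langle Y\rangle$; taking $X=E(Y)$ in $\A$ and invoking Lemma~\ref{le:res} yields $\suppex(X)=\langle P(X)\rangle=\U$; and since $\suppex(\langle X\rangle_\ex)=\suppex(X)$ by exactness of $\suppex$ and $X\in\A_\U=\C$, the bijection of Theorem~\ref{th:cohstable} forces $\langle X\rangle_\ex=\C$. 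Only one small wobble: in your verification of (E$1$) you write $\suppex(G)=\langle P(G)\rangle$, but that formula computes $\supp(G)$; by Lemma~\ref{le:res} a non-injective $G$ only satisfies the inclusion $\suppex(G)\supseteq\langle P(G)\rangle=\supp(G)$, which is in any case the direction you need. The cited equivalence $\Spec\A\simeq\Inj_{\mathrm{ns}}\End(G)$ is also stated in the paper's footnote only for a generator of the spectral category itself, so it is not quite available off the shelf here. Neither point affects the structure of the argument, which matches the paper's.
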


The computation of $\suppex$ in  Lemma~\ref{le:res} shows that for each object $X\in\A$ we
have an inclusion
\[\supp(X)\subseteq\suppex(X).\]
There is an equality when $\A$ is spectral, but small examples show
that it is a proper inclusion in general.

\begin{exm}
Let $\A=\Mod\La$ be the module category of $\La=\smatrix{k&k\\0&k}$
where $k$ is a field. Then there are two indecomposable injective
$\La$-modules and $\Sp\A=\{I_1,I_2\}=\Spc\A$. There is a short exact
sequence $0\to S_1\to I_1\to I_2\to 0$ where $S_1=\soc(I_1)$, and therefore
\[\supp(S_1)=\{I_1\}\qquad\text{and}\qquad \suppex(S_1)=\{I_1,I_2\}.\]
\end{exm}

\subsection*{Stable localising subcategories}

Let $\A$ be a Grothendieck category. A localising subcategory is called
\emph{stable} if it is closed under essential extensions, equivalently
if it is closed under injective envelopes. The stable localising
subcategories are partially ordered by inclusion and closed under all
intersections; so they form a complete lattice.

Given an element $\U\in\bfS(\A)$, we have always the inclusion
$\A_\U\subseteq P^{-1}(\U)$. So one may ask when the equality
$\A_\U=P^{-1}(\U)$ holds. 

\begin{lem}\label{le:coh}
  For  a full subcategory $\C\subseteq\A$ the following are
  equivalent.
  \begin{enumerate}
\item $\C$  is stable localising. 
\item $\C$ is essentially closed and cohomologically stable.
\item There exists $\U\in\bfS(\A)$ such that $\C=\A_\U=P^{-1}(\U)$.
  \end{enumerate}
\end{lem}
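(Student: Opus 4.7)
The plan is to prove the cycle $(1)\Rightarrow(2)\Rightarrow(3)\Rightarrow(1)$, using the two classification theorems~\ref{th:spectral} and~\ref{th:cohstable} as the main organising tools.

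First I would verify $(1)\Rightarrow(2)$. If $\C$ is stable localising, then closure under subobjects and coproducts comes from being localising, and closure under essential extensions from being stable; this is exactly essentially closed. For cohomologically stable, thickness is part of being localising, condition~(1) of Definition~\ref{de:stable-subcat} combines closure under coproducts with stability (the injective envelope of $\coprod X_\a$ being an essential extension), and condition~(2) holds because the kernel of $X^0\to X^1$ is a subobject of $X^0\in\C$, and $\C$ is closed under subobjects.

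For $(2)\Rightarrow(3)$, I would set $\U:=\supp(\C)=P(\C)$, so that Theorem~\ref{th:spectral} yields $\C=P^{-1}(\U)$ immediately. The core step is then to prove $\C=\A_\U$; since $\A_\U\subseteq P^{-1}(\U)$ holds unconditionally, it suffices to show $\suppex(X)\subseteq\U$ for every $X\in\C$. My approach here is to check inductively that the whole minimal injective resolution $X\to I$ stays inside $\C$: the envelope $I^0=E(X)$ lies in $\C$ by Definition~\ref{de:stable-subcat}(1) applied to the singleton $\{X\}$; since $\C$ is thick and closed under subobjects it is automatically closed under quotients, so $\Coker(X\to I^0)\in\C$, hence $I^1=E(\Coker(X\to I^0))\in\C$, and iteration gives $I^n\in\C$ for all $n$. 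Thus $P(I^n)\in P(\C)=\U$ for every $n$, and Lemma~\ref{le:res} yields $\suppex(X)\subseteq\U$, as required.

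Finally, for $(3)\Rightarrow(1)$, the assumption $\C=P^{-1}(\U)=\A_\U$ combines Lemma~\ref{le:spectral}(1) (essentially closed, hence closure under subobjects, coproducts, and essential extensions) with Lemma~\ref{le:thick} (thickness). Closure under quotients is then formal: for $0\to X\to Y\to Z\to 0$ with $Y\in\C$, closure under subobjects gives $X\in\C$ and then two-out-of-three gives $Z\in\C$; closure under extensions is immediate from two-out-of-three. Combined with closure under coproducts this makes $\C$ localising, and closure under essential extensions makes it stable. The main obstacle in the whole argument is the core step of $(2)\Rightarrow(3)$: one has to realise that essentially closed together with cohomologically stable already entails closure under quotients (via thickness combined with closure under subobjects), and this is what drives the inductive control of the injective resolution; everything else is bookkeeping against the earlier classification theorems.
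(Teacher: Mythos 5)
Your proof is correct and follows essentially the same route as the paper's: stable localising gives $\C=P^{-1}(\U)$ via Theorem~\ref{th:spectral}, cohomological stability gives $\C=\A_\U$, and the converse is direct. Your explicit chase through the minimal injective resolution in $(2)\Rightarrow(3)$ is just the detail the paper compresses into its appeal to Theorem~\ref{th:cohstable}, so this is the same argument with the steps spelled out.
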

\begin{proof}
 If $\C\subseteq\A$ is stable localising, then $\C=P^{-1}(\U)$ for
 some $\U\in\bfS(\A)$. Also, we have $\C=\A_\U$ since $\C$ is
 cohomologically stable. On the other hand, when $\A_\U=P^{-1}(\U)$,
 then this subcategory is clearly stable localising.
\end{proof}

\begin{prop}\label{pr:stable-loc}
  Let $\A$ be a Grothendieck category. The assignment
  \[\C\longmapsto\supp(\C)=P(\C)\] induces an embedding 
\[\{\C\subseteq\A\mid \C\text{  stable
    localising}\}\hookrightarrow\bfS(\A)\]
which preserves  arbitrary joins and meets. 
\end{prop}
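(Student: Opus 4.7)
The plan is to interpret the map via the earlier frame morphism $\s\colon \bfL(\A)\to\bfS(\A)$ from Theorem~\ref{th:spectral-localising} and combine it with Lemma~\ref{le:coh}. First I would establish well-definedness and injectivity: by Lemma~\ref{le:coh}, a subcategory $\C\subseteq\A$ is stable localising if and only if $\C=P^{-1}(\U)$ for some $\U\in\bfS(\A)$, and in that case $\supp(\C)=P(\C)=\U$ because $P$ is the identity on objects. Thus the map lands in $\bfS(\A)$, and the assignment $\U\mapsto P^{-1}(\U)$ on its image is an inverse on the nose, giving injectivity. Moreover, for any stable localising $\C$ one has $\bar\C=\C$, so the map coincides with the restriction of $\s$ in Theorem~\ref{th:spectral-localising}.

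Next I would handle meets. The intersection of an arbitrary family of stable localising subcategories is again stable localising, since every defining closure property (subobjects, quotients, extensions, coproducts, essential extensions) is preserved under arbitrary intersections; thus the meet in the lattice of stable localising subcategories is computed as in the lattice of all subcategories. Using once more that $P$ is the identity on objects,
\[\supp\!\left(\bigcap_\a \C_\a\right)=P\!\left(\bigcap_\a \C_\a\right)=\bigcap_\a P(\C_\a)=\bigwedge_\a \supp(\C_\a)\]
in $\bfS(\A)$, so arbitrary meets are preserved.

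The join step is the substantive one. My plan is to show that for stable localising $\C_\a$, the join $\bigvee^{\bfL}_\a \C_\a$ formed in $\bfL(\A)$ is already stable, whence it agrees with the join in the stable-localising lattice. The key observation is the standard identity
\[\left(\bigvee^{\bfL}_\a \C_\a\right)^{\!\perp}=\bigcap_\a \C_\a^\perp\]
for hereditary torsion pairs (see \cite[VI]{St1975}): indeed $\C_\a^\perp$ depends only on objects of $\C_\a$, and $\{X:\Hom(-,X)|_\C=0\}$ is determined by generators of the localising subcategory $\C$. Since each $\C_\a$ is stable, each $\C_\a^\perp$ is closed under essential extensions (equivalently, under injective envelopes); arbitrary intersections inherit this closure, so the torsion-free class of $\bigvee^{\bfL}_\a \C_\a$ is closed under essential extensions, which is precisely the stability of the torsion class. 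Thus $\bigvee^{\bfL}_\a \C_\a$ is stable localising, and hence equals $\bigvee^{\mathrm{st}}_\a \C_\a$. Applying the join-preservation of $\s$ from Theorem~\ref{th:spectral-localising} then yields
\[\supp\!\left(\bigvee^{\mathrm{st}}_\a \C_\a\right)=\s\!\left(\bigvee^{\bfL}_\a \C_\a\right)=\bigvee_\a \s(\C_\a)=\bigvee_\a \supp(\C_\a).\]

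The step I expect to be the main obstacle is the equality $(\bigvee^{\bfL}_\a \C_\a)^\perp=\bigcap_\a \C_\a^\perp$ together with the characterisation of stable torsion classes via essential-extension-closure of the torsion-free class; both are classical but should be invoked with a precise reference, and care is needed because the torsion-free class of a join of torsion classes, not just the join of localising subcategories, is what drives the argument.
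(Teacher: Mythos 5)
Your treatment of well-definedness, injectivity, and meets is fine and essentially matches the paper. The join step, however, rests on a false characterisation of stability and does not go through.

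You claim that stability of a hereditary torsion class $\C$ is "precisely" the closure of the torsion-free class $\C^\perp$ under essential extensions, and you deduce closure of $\C_\a^\perp$ under essential extensions from stability of $\C_\a$. But for \emph{any} hereditary torsion pair the torsion-free class is automatically closed under essential extensions: if $Y\in\C^\perp$ and $Y\subseteq Z$ is essential, then $t_\C(Z)\cap Y\subseteq t_\C(Y)=0$ since $\C$ is closed under subobjects, and essentiality forces $t_\C(Z)=0$. So your criterion holds for every hereditary torsion class, stable or not, and the argument would prove that every hereditary torsion class is stable. That is false; the paper records the example $\La=\smatrix{k&k\\0&k}$, which has a non-stable localising subcategory. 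So the step "the torsion-free class of $\bigvee^{\bfL}_\a\C_\a$ is closed under essential extensions, hence $\bigvee^{\bfL}_\a\C_\a$ is stable" is a genuine gap. (The auxiliary identity $(\bigvee^{\bfL}_\a\C_\a)^\perp=\bigcap_\a\C_\a^\perp$ is correct and is essentially the second assertion of Lemma~\ref{le:join}.)

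The substantive content you would need is exactly what the paper supplies: with $\U=\bigvee_\a\U_\a$ in $\bfS(\A)$, the essentially closed subcategory $P^{-1}(\U)$ is additionally closed under quotients. The paper proves this by reducing to epimorphisms $\p\colon I\to X$ with $I$ injective, handling the case of a join of two subcategories via a direct-sum decomposition $I=I_\a\oplus I_\b$ and a short exact sequence of images, and then passing to arbitrary index sets by writing $\U$ as a directed join and using Lemma~\ref{le:join} together with the fact that $P$ commutes with directed unions. Once one knows $P^{-1}(\U)$ is stable localising, it is the join in the stable-localising lattice and maps to $\U$ under $\supp$; your conclusion that $\bigvee^{\bfL}_\a\C_\a$ is itself stable then also follows, but only \emph{after} this closure-under-quotients argument, not before it.
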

\begin{proof}
  We apply Lemma~\ref{le:coh}.  Each stable localising subcategory is
  essentially closed. Thus the embedding follows from Theorem~\ref{th:spectral}.
  The stable localising subcategories of $\A$ are closed under
  arbitrary intersections. So arbitrary meets are preserved. For a
  family $(\C_\a)_{\a\in A}$ of stable localising subcategories with
  $\U_\a=P(\C_\a)$ we consider the join $\U=\bigvee_\a \U_\a$ and need to show
  that $P^{-1}(\U)$ is a stable localising subcategory. The
  subcategory is essentially closed. So it suffices to show that for any
  epimorphism $\p\colon I\to X$ with $I$ injective and $P(I)\in \U$ we
  have that $P(X)\in \U$. We begin with the finite case and may assume
  $A= \{\a,\b\}$. Let $I=I_\a\oplus I_\b$ with $I_\a\in\C_\a$ and
  $I_\b\in\C_\b$. This decomposition induces an exact sequence
  $0\to X_\a\to X\to X_\b\to 0$ where $X_\a=\p(I_\a)$ and $X_\b$ is a
  quotient of $I_\b$. Since $\C_\a$ and $\C_\b$ are closed under
  quotients, it follows that $P(X)\in \U$. Writing $A$ as directed
  union of its finite subsets, we may assume that
  $\U=\bigvee_\a \U_\a$ is directed. Then $I=\sum_\a I_\a$ is the
  directed union of injectives $I_\a\in\C_\a$ by
  Lemma~\ref{le:join}. This implies $X=\sum_\a X_\a$, where
  $X_\a=\p(I_\a)$. We have $P(X)=\sum_\a P(X_\a)$, and this belongs to
  $\U$ since $P(X_\a)\in\U_\a$ for all $\a$.
\end{proof}

\begin{cor}
The stable localising subcategories of a Grothendieck category form a
frame and a coframe.
\end{cor}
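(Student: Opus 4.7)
The plan is to transport the frame and coframe identities from $\bfS(\A)$ back along the embedding furnished by Proposition~\ref{pr:stable-loc}.

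First I would note that by Proposition~\ref{pr:Boole} the lattice $\bfS(\A)=\bfL(\Spec\A)$ is a complete Boolean lattice, and in a complete Boolean lattice both distributivity identities
\[u\wedge\bigvee_\a v_\a=\bigvee_\a(u\wedge v_\a)\qquad\text{and}\qquad u\vee\bigwedge_\a v_\a=\bigwedge_\a(u\vee v_\a)\]
hold. So $\bfS(\A)$ is simultaneously a frame and a coframe (this also follows directly from Propositions~\ref{pr:lattice-loc} and~\ref{pr:lattice-loc-spec} applied to $\Spec\A$).

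Next, by Proposition~\ref{pr:stable-loc} the support map $\C\mapsto P(\C)$ gives an injective lattice homomorphism
\[\Phi\colon\{\C\subseteq\A\mid \C\text{ stable localising}\}\hookrightarrow\bfS(\A)\]
that preserves arbitrary joins and arbitrary meets; in particular it preserves finite meets and finite joins. Given a stable localising subcategory $\U$ and a family $(\V_\a)$ of such, I would compute
\[\Phi\left(\U\wedge\bigvee_\a\V_\a\right)=\Phi(\U)\wedge\bigvee_\a\Phi(\V_\a)=\bigvee_\a\bigl(\Phi(\U)\wedge\Phi(\V_\a)\bigr)=\Phi\left(\bigvee_\a(\U\wedge\V_\a)\right),\]
where the middle equality uses that $\bfS(\A)$ is a frame. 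Injectivity of $\Phi$ then yields the frame identity in the lattice of stable localising subcategories. The coframe identity is proved by the symmetric argument, using that $\bfS(\A)$ is a coframe.

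There is no real obstacle here; the whole point is that Proposition~\ref{pr:stable-loc} already packages the nontrivial content (that the stable localising subcategories sit inside $\bfS(\A)$ as a sublattice closed under arbitrary joins and meets), and the Corollary is then a formal transfer of distributivity along an order-embedding that preserves the relevant operations. The only thing to be slightly careful about is verifying that the embedding does preserve finite meets and finite joins, which is immediate from preservation of arbitrary meets and joins.
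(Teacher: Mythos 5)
Your proposal is correct and follows the same route as the paper: the paper's proof just cites Proposition~\ref{pr:stable-loc} together with Propositions~\ref{pr:lattice-loc} and~\ref{pr:lattice-loc-spec}, and you have simply spelled out the transport of the two distributivity identities along the join- and meet-preserving embedding into $\bfS(\A)$, which the paper leaves implicit. The only gloss worth making is that your parenthetical ``this also follows directly from Propositions~\ref{pr:lattice-loc} and~\ref{pr:lattice-loc-spec} applied to $\Spec\A$'' is in fact exactly the justification the paper uses, while the complete-Boolean-algebra observation is a correct but unused extra.
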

\begin{proof}
  This follows from the above theorem since $\bfS(\A)$ is a frame and
  a coframe, by Propositions~\ref{pr:lattice-loc} and
  \ref{pr:lattice-loc-spec}.
\end{proof}

\begin{exm}
Let $\A=\Mod\La$ be the module category of a ring $\La$. When $\La$ is
commutative noetherian, then all localising subcategories are
stable. This fails when $\La$ is not commutative. For
$\La=\smatrix{k&k\\0&k}$ and $k$ a field, there are two proper
localising subcategories and one of them is not stable.
\end{exm}

\subsection*{Derived categories}

For a Grothendieck category $\A$ we have already considered the right
derived functor of the canonical functor $P\colon\A\to\Spec\A$. The
restriction $\Inj\A\to(\Inj\A)/\Rad(\Inj\A)\iso\Spec\A$ induces an
exact functor
\[\bfK(\Inj\A)\lto\bfK(\Spec\A)\iso\bfD(\Spec\A)\]
where $\bfK(\Inj\A)$ denotes the category of complexes in $\Inj\A$ up
to homotopy.  Its computation for any $X\in\bfK(\Inj\A)$ is very
explicit. By removing non-zero contractible summands we may assume
that the complex $X$ is homotopically minimal so that all
differentials are radical morphisms; see
\cite[Proposition~4.3.18]{Kr2022}. Then $P(X)$ has zero differentials.

For example, when $\A$ is locally noetherian the triangulated category
$\bfK(\Inj\A)$ is compactly generated with
category of compacts equivalent to $\bfD^{\mathrm b}(\Noeth\A)$, where
$\Noeth\A$ denotes the full subcategory of noetherian objects in $\A$;
see \cite{Kr2005}. Moreover, in this case the functor
$\bfK(\Inj\A)\to\bfD(\Spec\A)$ preserves all coproducts.

Given a triangulated category $\T$ with coproducts, let $\bfL(\T)$
denote the lattice of localising subcategories. The inclusion
$i\colon\Spec\A\to\bfD(\Spec\A)$ induces a lattice isomorphism
\[\bfL(\bfD(\Spec\A))\longiso\bfL(\Spec\A)\]
by taking $\U\subseteq \bfD(\Spec\A)$ to $i^{-1}(\U)$. The inverse map
sends $\V\subseteq\Spec\A$ to $\bfD(\V)$ where we identify
\[\bfD(\V)=\{X\in\bfD(\Spec\A)\mid  H^n(X)\in\V\text{ for all
  }n\in\bbZ\}.\] This yields a notion of support for objects
$X\in\bfK(\Inj\A)$ with $\supp(X)\subseteq\Spc\A$, since
$\bfL(\Spec\A)\iso\Clop(\Spc\A)$. Details and properties of this are
left to the interested reader.
  
\section{Exactly definable categories}\label{se:ext}

There are interesting classes of rings where the Boolean spectrum of
its module category is a finite set with the discrete topology. For
instance this includes all artinian rings. In such a case it is 
more interesting to study the pure-injective modules and the
corresponding pure spectral category, as the class of injective
modules is rather small.  The notion of purity for modules goes back to
Cohn \cite{Co1959} and was later generalised by Crawley-Boevey in the
context of locally finitely presented additive categories
\cite{CB1994}. A key idea is to identify the relevant additive
category $\C$ with the full subcategory of fp-injective objects in a
much larger locally coherent Grothendieck category $\bfP(\C)$, the
purity category of $\C$. Following \cite{Kr1998} such categories $\C$
are called \emph{exactly definable}, because they identify with the
category of exact functors $\A\to\Ab$ from a small abelian category
$\A$ to the category of abelian groups.

\subsection*{Exactly definable categories}\label{se:ex-definable}

Let $\A$ be a Grothendieck category that is \emph{locally
  coherent}. Thus $\A$ has a generating set of finitely presented
objects and the full subcategory $\fp\A$ of finitely presented objects
is abelian. An object $X\in\A$ is \emph{fp-injective} if
$\Ext^1(F,X)=0$ for all $F\in\fp\A$.  Let $\C:=\Fpinj\A$ denote the
full subcategory of fp-injective objects in $\A$. This is an exact
subcategory of $\A$ that is closed under filtered colimits and
products. In fact, the exact structure of $\C$ is intrinsic, because
it is the smallest exact structure on $\C$ such that the exact
sequences are closed under filtered colimits.\footnote{I am grateful
  to Kevin Schlegel for suggesting the argument: For any object
  $X\in\C$ there is a reduced power $\bar X=\prod_I X/\F$ (given by an
  appropriate ultrafilter $\F$ on some sufficiently large set $I$)
  which is injective in $\A$; see \cite[4.2.5]{Pr2009}. The canonical
  morphism $X\to \bar X$ is by construction a filtered colimit of
  split monos, and it factors through any mono $X\to Y$ since $\bar X$
  is injective. Thus any mono $X\to Y$ is an admissible mono with
  respect to any exact structure that is closed under filtered
  colimits.} The exact sequences in $\C$ are called \emph{pure-exact},
and $\C$ admits pure-injective envelopes which identify with the
injective envelopes in the ambient category $\A$.

\begin{defn}\label{de:ex-def}
  A category that is equivalent to one of the form $\Fpinj\A$ for a
  locally coherent category $\A$ is called \emph{exactly definable};
  it has filtered colimits, products and an intrinsic exact structure
  given by the pure-exact sequences.
\end{defn}

Let $\C$ be an exactly definable category, given by a locally coherent
category $\A$. Then $\A$ is uniquely determined by $\C$, because the
category of pure-injective objects in $\C$ identifies with the
category of injective objects in $\A$, and any abelian category with
enough injective objects is uniquely determined by its category of
injective objects. We write $\bfP(\C)$ for $\A$ and call it the
\emph{purity category} of $\C$.

\begin{exm}\label{ex:ex-def}
Let $\La$ be a ring and let $\mod\La$ denote its category of finitely presented
modules. The category $(\mod(\La^\op),\Ab)$ of additive
functors $\mod(\La^\op)\to\Ab$ is a locally coherent Grothendieck
category, and the assignment \[X\longmapsto X\otimes_\La
  -|_{\mod(\La^\op)}\]
identifies $\Mod\La$ with the full subcategory of fp-injective objects
in $(\mod(\La^\op),\Ab)$. Thus $\Mod\La$ is an exactly definable
category and
\[\bfP(\Mod\La)=(\mod(\La^\op),\Ab).\]
This goes back to Gruson and Jensen
\cite{GJ1981}; see also  \cite[\S3]{CB1994} or \cite[12.4]{Kr2022}.
\end{exm}

\subsection*{Definable subcategories}

Fix an exactly definable category $\C$, given by a locally coherent
category $\A$.  Following Crawley-Boevey \cite{CB1998} a full
subcategory $\D\subseteq\C$ is called \emph{definable} if it is closed
under filtered colimits, products, and pure subobjects. The
terminology is justified by the fact that definable subcategories of
$\C$ correspond to Serre subcategories of $\fp\A$: we have
\[\D=\{X\in\C\mid\Hom(F,X)=0\text{ for all }F\in\calS\}\] for some Serre
subcategory $\calS\subseteq\fp\A$; see \cite[12.2]{Kr2022} for
details.  From the above description it follows that $\D$ is a thick
subcategory. Moreover, $\D$ is closed under pure-injective envelopes;
see \cite[Proposition~12.2.8]{Kr2022}. We generalise the definition of
a definable subcategory in two directions, following the
Definitions~\ref{de:spectral-subcat} and \ref{de:stable-subcat}.

\begin{defn}
  A full subcategory $\D\subseteq\C$ is called \emph{pure-essentially closed} if
  it is closed under arbitrary coproducts, pure subobjects, and  pure-injective
  envelopes.
\end{defn}

\begin{defn}
  A thick subcategory $\D\subseteq\C$ is called \emph{pure-cohomologically stable} if
  \begin{enumerate}
\item for every family $(X_\a)$ of objects in $\D$ the pure-injective
  envelope of  $\coprod_\a X_\a$ belongs to $\D$, and
\item for every pure-exact sequence $X^0\to X^1\to X^2\to\cdots$ with all
  $X^n$ in $\D$ the kernel of $X^0\to X^1$ belongs to $\D$.
\end{enumerate}
\end{defn}

For an exact category we recall that a sequence of composable morphisms
$(\p^n)$  is \emph{exact} if each $\p^n$ is the composite
$\p^n=\iota_n\pi_n$ of an admissible epimorphism (deflation) followed
by an admissible monomorphism (inflation) so that each pair
$(\iota_n,\pi_{n+1})$ is a conflation.

\begin{lem}
  Any definable subcategory of an exactly definable category is
  pure-essentially closed and pure-cohomologically stable.
\end{lem}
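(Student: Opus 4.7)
The plan is to exploit the two equivalent descriptions of a definable subcategory $\D\subseteq\C$: the intrinsic one (closed under filtered colimits, products, and pure subobjects) and the extrinsic one $\D=\{X\in\C\mid\Hom(F,X)=0\text{ for all }F\in\calS\}$ for a suitable Serre subcategory $\calS\subseteq\fp\A$. The first description immediately gives the three closure properties required, while the second handles the kernel condition via left exactness.

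First I would verify pure-essentially closed. Closure under pure subobjects is part of the definition, and closure under pure-injective envelopes is the content of \cite[Proposition~12.2.8]{Kr2022}, which is already cited in the text. For arbitrary coproducts, the standard trick is to write $\coprod_\a X_\a$ as the filtered colimit of its finite subcoproducts; since finite coproducts in an additive category coincide with finite products, the filtered colimit closure and product closure of $\D$ together give the claim.

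Next, for pure-cohomologically stable, thickness of $\D$ is already recorded in the discussion preceding the lemma. Condition (1) combines the two facts just established: $\coprod_\a X_\a$ lies in $\D$ as a filtered colimit of finite products, and its pure-injective envelope then lies in $\D$ by closure under pure-injective envelopes. Condition (2) is where I would use the $\calS$-description. Given a pure-exact sequence $X^0\to X^1\to X^2\to\cdots$ in $\C$ with each $X^n\in\D$, the kernel $K$ of $X^0\to X^1$ sits as a subobject of $X^0$ in the ambient abelian category $\A$, and the induced sequence $0\to K\to X^0\to I^0\to 0$ (with $I^0$ the image) is a conflation in $\C$ by the definition of exactness in the exact category $\C$, so in particular $K\in\C$. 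For each $F\in\calS$ the left-exact functor $\Hom(F,-)$ applied to $0\to K\to X^0$ produces the injection $\Hom(F,K)\hookrightarrow\Hom(F,X^0)=0$, forcing $K\in\D$.

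I do not expect a serious obstacle here; the only subtle point is making sure that $K$ really belongs to $\C$ (i.e. is fp-injective), which follows because the factorisation $X^0\twoheadrightarrow I^0\hookrightarrow X^1$ of the first map in the exact category $\C$ produces a conflation $0\to K\to X^0\to I^0\to 0$ whose outer terms lie in $\C$. Once $K\in\C$ is granted, the Hom-vanishing argument closes the proof.
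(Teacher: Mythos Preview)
Your proposal is correct and follows the same approach as the paper, which simply invokes the already-established facts (definable subcategories are thick and closed under pure-injective envelopes) together with the defining closure properties. Your detour through the $\calS$-description for condition~(2) is sound but unnecessary: since $K\to X^0$ is an inflation in the exact category $\C$, it is a pure monomorphism, so $K$ is a pure subobject of $X^0\in\D$ and the defining closure under pure subobjects already gives $K\in\D$.
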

\begin{proof}
  This follows from the defining properties of a definable subcategory
  and the additional properties that have already been
  mentioned. Specifically, one uses that a definable subcategory is
  thick and closed under pure-injective envelopes.
\end{proof}

The pure-essentially closed and the pure-cohomologically stable subcategories are partially
ordered by inclusion and closed under arbitrary intersections; so they
form complete lattices.

\subsection*{The Ziegler topology}

The class of pure-injective modules has been studied in seminal work
of Ziegler on the model theory of modules \cite{Zi1984}. In
particular, he introduced for any ring a topology on the set of isoclasses of
indecomposable pure-injective modules.

Let $\Ind\C$ denote the set of isoclasses of indecomposable
pure-injective objects in $\C$; this identifies with $\Sp\bfP(\C)$. The assignment
\begin{equation}\label{eq:Ziegler}
  \D\longmapsto\D\cap\Ind\C
\end{equation}  identifies the 
definable subcategories of $\C$ with the \emph{Ziegler closed} subsets
of $\Ind\C$. The inverse map sends a Ziegler closed subset $\U$ to the
definable subcategory  generated by $\U$, which consists of
all pure subobjects of products of objects in $\U$. This
correspondence is due to Crawley-Boevey for module categories
\cite[2.5]{CB1998} and we refer to \cite[12.2]{Kr2022} for the general version.

\subsection*{The Boolean spectrum}

Let $\PInj\C$ denote the full subcategory of pure-injective objects in
$\C$; it identifies with $\Inj\bfP(\C)$ via the embedding
$\C\to\bfP(\C)$. The \emph{pure spectral category} is the quotient
\[\PSpec\C:=(\PInj\C)/\Rad(\PInj\C)\iso\Spec\bfP(\C)\]
which is a spectral Grothendieck category because of the equivalence
from \eqref{eq:Spec}. We consider the lattice of
localising subcategories
\[\bfL(\PSpec\C)\iso\bfS(\bfP(\C)).\]
This is a Boolean lattice by Proposition~\ref{pr:Boole} and  its spectrum
\[\PSpc\C:=\Spec (\bfL(\PSpec\C))\]
can be written as the disjoint union of a discrete and a continuous
part as in \eqref{eq:Spec-decomposition}. This yields a  canonical embedding
$\Ind\C\hookrightarrow\PSpc\C$ 
which identifies $\Ind\C$ with the discrete part. We endow $\Ind\C$
with the Ziegler topology and write $\Cl(\Ind\C)$ for its lattice of
closed subsets.

In the following we provide two possible extensions of
Crawley-Boevey's correspondence \eqref{eq:Ziegler} for definable
subcategories, which amounts to a lattice isomorphism
  \[ \{\D\subseteq\C\mid\D\text{ definable}\}\longiso\Cl(\Ind\C).\]
We view $\C$ as a full subcategory of the purity
category $\bfP(\C)$ so that $\supp(X)$ and $\suppex(X)$ for objects
$X\in\C$ are defined via \eqref{eq:supp} and
\eqref{eq:suppex}. Moreover, these are viewed as subsets of $\PSpc\C$
via the isomorphism
\begin{equation}\label{eq:PSpc}
  \bfS(\bfP(\C))\iso\bfL(\PSpec\C)\xto{\eqref{eq:stone-unit}} \Clop(\PSpc\C).
\end{equation}

\begin{thm}\label{th:pure-spectral}
  Let $\C$ be an exactly definable category.  The assignment $\D\mapsto\supp(\D)$
  induces a lattice isomorphism
  \[ \{\D\subseteq\C\mid\D\text{ pure-essentially closed}\}\longiso\Clop(\PSpc\C).\] Moreover, the embedding
  $\Ind\C\hookrightarrow\PSpc\C$ identifies $\D\cap\Ind\C$ with
  $\supp(\D)\cap\Ind\C$ when $\D\subseteq\C$ is definable.
\end{thm}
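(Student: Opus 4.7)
The plan is to reduce to Theorem~\ref{th:spectral} applied to the purity category $\bfP(\C)$, which gives a bijection between essentially closed subcategories of $\bfP(\C)$ and $\bfS(\bfP(\C))\iso\Clop(\PSpc\C)$. I would set up a lattice isomorphism between pure-essentially closed subcategories of $\C$ and essentially closed subcategories of $\bfP(\C)$ via the mutually inverse assignments
\[
\D\longmapsto\tilde\D:=\{X\in\bfP(\C)\mid E(X)\in\D\}\qquad\text{and}\qquad\E\longmapsto\E\cap\C,
\]
where $E$ denotes the injective envelope in $\bfP(\C)$; the first map is well-defined because $\Inj\bfP(\C)=\PInj\C\subseteq\C$.

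To show $\tilde\D$ is essentially closed in $\bfP(\C)$, I would verify the three closure conditions. Subobjects: for $Y\hookrightarrow X$ with $X\in\tilde\D$, injectivity of $E(X)$ yields a monomorphism $E(Y)\to E(X)$ extending $Y\hookrightarrow E(X)$, which splits and realises $E(Y)$ as a direct summand --- hence a pure subobject --- of $E(X)\in\D$, so $E(Y)\in\D$. Essential extensions are immediate since $E$ is unchanged. Coproducts: using the standard fact that coproducts of essential monomorphisms in a Grothendieck category remain essential, one has $E(\coprod_\alpha X_\alpha)\cong E(\coprod_\alpha E(X_\alpha))$, and closure of $\D$ under coproducts (in $\C$, hence in $\bfP(\C)$) and under pure-injective envelopes gives $E(\coprod_\alpha X_\alpha)\in\D$. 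Conversely, $\E\cap\C$ is pure-essentially closed because pure subobjects in $\C$ are subobjects in $\bfP(\C)$ with source in $\C$, and pure-injective envelopes in $\C$ coincide with injective envelopes in $\bfP(\C)$, operations under which $\E$ is closed.

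For the mutual inverses, given $X\in\tilde\D\cap\C$ the inclusion $X\hookrightarrow E(X)$ is a pure monomorphism in $\C$ (since $\Ext^1(F,X)=0$ gives $\Hom(F,E(X))\twoheadrightarrow\Hom(F,E(X)/X)$ for all $F\in\fp\bfP(\C)$), so closure under pure subobjects yields $X\in\D$; the reverse $\D\subseteq\tilde\D\cap\C$ is automatic. Similarly $\widetilde{\E\cap\C}=\E$ because $E(X)\in\E\cap\C$ for $X\in\E$, while any $X\in\widetilde{\E\cap\C}$ satisfies $X\subseteq E(X)\in\E$, hence $X\in\E$. Since $P(X)\cong P(E(X))$ and $\D\subseteq\tilde\D$, the supports agree: $\supp(\D)=\supp(\tilde\D)=P(\tilde\D)$ in $\bfS(\bfP(\C))$, which combined with Theorem~\ref{th:spectral} and \eqref{eq:PSpc} yields the claimed isomorphism. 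For the second assertion, an indecomposable pure-injective $I$ maps under $\Ind\C\hookrightarrow\PSpc\C$ to the Stone-dual point at which $\U\in\bfS(\bfP(\C))$ evaluates to $1$ iff $P(I)\in\U$; thus $I\in\supp(\D)\cap\Ind\C$ iff $P(I)\in P(\tilde\D)$, iff $I\in\tilde\D$ by Theorem~\ref{th:spectral}, iff $I=E(I)\in\D$. The main obstacle is the coproduct closure of $\tilde\D$, which relies on the (standard but not entirely trivial) preservation of essentiality under coproducts together with the combined use of both closure properties of $\D$.
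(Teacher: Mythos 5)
Your proposal is correct and takes essentially the same route as the paper: the paper's one-line observation that the bijection is $\U\mapsto\U\cap\C$, justified by noting that essentially closed subcategories of $\bfP(\C)$ are determined by their intersection with $\Inj\bfP(\C)=\PInj\C$, is exactly what your explicit inverse $\D\mapsto\tilde\D=\{X\in\bfP(\C)\mid E(X)\in\D\}$ realises. You simply fill in the details the paper leaves to the reader — in particular the verification that $X\hookrightarrow E(X)$ is pure in $\C$, and that $\tilde\D$ is closed under coproducts via $E(\coprod X_\a)\cong E(\coprod E(X_\a))$ — and these checks are accurate.
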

\begin{proof}
  We view $\C\subseteq\bfP(\C)$ as a full subcategory. Taking an
  essentially closed subcategory $\U\subseteq\bfP(\C)$ to $\U\cap\C$ induces a
  bijection between the essentially closed subcategories of $\bfP(\C)$ and the
  pure-essentially closed subcategories of $\C$. This follows from the fact that
  any essentially closed subcategory $\U\subseteq\bfP(\C)$ is determined by
  $\U\cap \Inj\bfP(\C)$, keeping in mind that
  $\PInj\C=\Inj\bfP(\C)$. Given this correspondence the isomorphism is
  obtained by composing the isomorphisms from
  Theorem~\ref{th:spectral} and \eqref{eq:PSpc}. The additional
  assertion about definable subcategories follows by unraveling the
  construction of the embedding $\Ind\C\hookrightarrow\PSpc\C$.
\end{proof}  

\begin{thm}\label{th:pure-stable}
  Let $\C$ be an exactly definable category.  The assignment
  $\D\mapsto\suppex(\D)$  induces a
  lattice isomorphism
  \[ \{\D\subseteq\C\mid\D\text{ pure-cohomologically stable}\}\longiso\Clop(\PSpc\C).\] Moreover, the embedding
  $\Ind\C\hookrightarrow\PSpc\C$ identifies $\D\cap\Ind\C$ with
  $\suppex(\D)\cap\Ind\C$ when $\D\subseteq\C$ is definable. 
\end{thm}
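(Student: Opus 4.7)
The plan is to mirror the proof of Theorem~\ref{th:pure-spectral}. I would establish that intersection with $\C$ induces a bijection between the cohomologically stable subcategories of $\bfP(\C)$ and the pure-cohomologically stable subcategories of $\C$; composing with Theorem~\ref{th:cohstable} and the isomorphism \eqref{eq:PSpc} then yields the stated bijection via $\D \mapsto \suppex(\D)$.

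For the easy direction, if $\U \subseteq \bfP(\C)$ is cohomologically stable, then $\U \cap \C$ is pure-cohomologically stable: pure-exact sequences in $\C$ are precisely short exact sequences in $\bfP(\C)$ with all three terms in $\C$, and pure-injective envelopes in $\C$ coincide with injective envelopes in $\bfP(\C)$ (since $\PInj\C = \Inj\bfP(\C)$), so the two axioms of cohomologically stable for $\U$ translate directly. The assignment $\U \mapsto \U \cap \C$ is injective because, by Theorem~\ref{th:cohstable}, $\U = \bfP(\C)_V$ is determined by $\Inj_V\bfP(\C) = \U \cap \Inj\bfP(\C)$, and the inclusion $\Inj\bfP(\C) \subseteq \C$ means this set is recoverable from $\U \cap \C$.

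For surjectivity, let $\D \subseteq \C$ be pure-cohomologically stable, set $V := \suppex(\D) \in \bfS(\bfP(\C))$, and let $\U := \bfP(\C)_V$. The containment $\D \subseteq \U \cap \C$ is immediate. The reverse inclusion hinges on showing $\Inj_V\bfP(\C) \subseteq \D$. First, the minimal injective coresolution $Y \to J^0_Y \to J^1_Y \to \cdots$ in $\bfP(\C)$ of any $Y \in \D$ lies entirely in $\D$: axiom~(1) gives $J^0_Y = E(Y) \in \D$, the pure cokernel is in $\D$ by thickness in the pure-exact structure, and the argument iterates. Combining Lemma~\ref{le:res} with Proposition~\ref{pr:spec-join}, any $I \in \Inj_V\bfP(\C)$ satisfies that $P(I)$ is a direct summand of $P(\coprod_i J^{n_i}_{Y_i})$ for some family in $\D$; applying Lemma~\ref{le:spec-decomp-inj} and \eqref{eq:Spec-iso} identifies $I$ as a direct summand of $E(\coprod_i J^{n_i}_{Y_i}) \in \D$ (using axiom~(1) again), hence $I \in \D$ by thickness. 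Now any $X \in \U \cap \C$ admits an injective coresolution in $\bfP(\C)$ with all terms in $\Inj_V\bfP(\C) \subseteq \D$; since these terms lie in $\C$ this is a pure-exact coresolution in $\C$, so axiom~(2) yields $X = \Ker(I^0 \to I^1) \in \D$.

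The additional assertion about definable $\D$ follows exactly as in Theorem~\ref{th:pure-spectral}, by unwinding the embedding $\Ind\C \hookrightarrow \PSpc\C$ and noting that the points of $\Ind\C$ correspond to simple objects of $\PSpec\C$, where the two support notions agree. The main obstacle is the inductive argument establishing $\Inj_V\bfP(\C) \subseteq \D$, since this requires the interplay of both axioms of pure-cohomological stability together with the decomposition results from \S\ref{se:spec}; everything else is a formal translation of the cohomologically stable setup into the pure-exact setting.
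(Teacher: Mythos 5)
Your proposal is correct and follows essentially the same route as the paper: the paper's proof is simply the instruction to adapt the proof of Theorem~\ref{th:pure-spectral} by substituting Theorem~\ref{th:cohstable} for Theorem~\ref{th:spectral}, which amounts exactly to the bijection $\U\mapsto\U\cap\C$ between cohomologically stable subcategories of $\bfP(\C)$ and pure-cohomologically stable subcategories of $\C$ that you establish. You have filled in the details — particularly the surjectivity argument via $\Inj_V\bfP(\C)\subseteq\D$, which re-runs the key step of Theorem~\ref{th:cohstable} in the relative setting — but the underlying structure and the lemmas invoked (Lemma~\ref{le:res}, Lemma~\ref{le:spec-decomp-inj}, \eqref{eq:Spec-iso}, Proposition~\ref{pr:spec-join}) are precisely those the paper's proof relies on implicitly.
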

\begin{proof}
Adapt the proof of Theorem~\ref{th:pure-spectral} by using
Theorem~\ref{th:cohstable} instead of Theorem~\ref{th:spectral}.
\end{proof}

\begin{rem}
  For every definable
  subcategory $\D\subseteq\C$ we have $\supp(\D)=\suppex(\D)$.
\end{rem}

\subsection*{Module categories}

Let $\La$ be a ring and consider its module category $\C=\Mod\La$
which is exactly definable. We use the obvious notation and set
$\PInj\La=\PInj\C$, $\Ind\La=\Ind\C$, and $\PSpc\La=\PSpc\C$. The
books of Prest \cite{Pr1988} and also Jensen and Lenzing \cite{JL1989}
discuss the basic structure of the \emph{pure spectral
  category} \[\PSpec\La=(\PInj\La)/\Rad(\PInj\La).\] For instance, it
is shown that any pure-injective module $X$ admits an essentially
unique decomposition $X=X_{\mathrm d}\oplus X_{\mathrm c}$ into a
\emph{discrete} and a \emph{continuous} part
\cite[Corollary~8.28]{JL1989}. 
This reflects the decomposition
\begin{equation*}\label{eq:pure-spec}
  \PSpec\La=(\PSpec\La)_{\mathrm
    d}\times (\PSpec\La)_{\mathrm c}
\end{equation*}
of the spectral category.  For a more detailed treatment of
decompositions of pure-injective modules we refer to
\cite{Fa1985}. Unfortunately, not much seems to be known about the
continuous part of the spectral category, except that one knows for
various classes of rings when the continuous part vanishes. On the
other hand, superdecomposable modules have been constructed more or
less explicitly for several rings, but these constructions do not
provide much insight into the ways they decompose; see \cite{Ri2000}
for a survey. In any case, the discussion in \S\ref{se:decomp}
explains how the Boolean lattice $\bfL(\PSpec\La)$ controls the
decompositions of pure-injective $\La$-modules; see also
Example~\ref{ex:superdecomp}.

We characterise the absence of superdecomposable modules and may ask
which Boolean lattices arise from representations of finite
dimensional algebras, following ideas from \cite{Br1974}.

\begin{prop}
We have  $X=X_{\mathrm d}$ for every pure-injective
$\La$-module $X$ if and only if the canonical embedding
$\Ind\La\hookrightarrow\PSpc\La$ is a bijection.
\end{prop}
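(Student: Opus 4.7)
The plan is to reduce everything to the decomposition \eqref{eq:Spec-decomposition} applied to the purity category $\A = \bfP(\Mod\La)$, which (together with $\Ind\La \iso \Sp\bfP(\Mod\La)$) gives
\[\PSpc\La = \Ind\La \sqcup \Spec(\bfL((\PSpec\La)_{\mathrm c})).\]
Thus the embedding $\Ind\La \hookrightarrow \PSpc\La$ is bijective if and only if the second summand is empty. By Stone duality the spectrum of a Boolean lattice is empty if and only if the lattice is trivial (one-element), and $\bfL(\B) = \{0\}$ for a Grothendieck category $\B$ forces $\B = 0$ (since $\B$ itself is a localising subcategory). So bijectivity of $\Ind\La \hookrightarrow \PSpc\La$ is equivalent to $(\PSpec\La)_{\mathrm c} = 0$.

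It remains to identify this vanishing with the condition that $X = X_{\mathrm d}$ for every pure-injective $X$. For this I use that $\PInj\La = \Inj\bfP(\Mod\La)$, so a pure-injective $X$ is an injective object in the purity category, and the Jensen--Lenzing decomposition $X = X_{\mathrm d} \oplus X_{\mathrm c}$ from \cite[Corollary 8.28]{JL1989} corresponds under the canonical functor $P\colon\bfP(\Mod\La)\to\PSpec\La$ to the decomposition of $P(X)$ coming from the product decomposition $\PSpec\La = (\PSpec\La)_{\mathrm d} \times (\PSpec\La)_{\mathrm c}$; the correspondence is enforced by Lemma~\ref{le:spec-decomp-inj}, which lifts such decompositions uniquely back to $X$.

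For the forward direction, if $X = X_{\mathrm d}$ for every pure-injective $X$, then every object of $\PSpec\La$ — being of the form $P(X)$ for some pure-injective $X$ by the equivalence \eqref{eq:Spec} — lies in $(\PSpec\La)_{\mathrm d}$, hence $(\PSpec\La)_{\mathrm c} = 0$. Conversely, if $(\PSpec\La)_{\mathrm c} = 0$, then for any pure-injective $X$ the continuous summand $X_{\mathrm c}$ satisfies $P(X_{\mathrm c}) = 0$; since $X_{\mathrm c}$ is injective in $\bfP(\Mod\La)$ and hence equal to its injective envelope, \eqref{eq:Spec-iso} gives $X_{\mathrm c} = 0$, so $X = X_{\mathrm d}$.

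The only delicate point is the identification of the Jensen--Lenzing decomposition with the one induced by the $\A_{\mathrm d} \times \A_{\mathrm c}$ splitting of $\PSpec\La$; this is where I would need to invoke the essential uniqueness of $X = X_{\mathrm d} \oplus X_{\mathrm c}$ together with Lemma~\ref{le:spec-decomp-inj} to match the two constructions. Once that matching is in place the argument becomes a straightforward chain of equivalences between the vanishing of $X_{\mathrm c}$, the vanishing of $(\PSpec\La)_{\mathrm c}$, triviality of its lattice of localising subcategories, and emptiness of the corresponding factor in $\PSpc\La$.
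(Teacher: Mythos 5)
Your proposal is correct and takes essentially the same route as the paper: both reduce the statement to the decomposition $\PSpec\La = (\PSpec\La)_{\mathrm d}\times(\PSpec\La)_{\mathrm c}$ together with the corresponding decomposition of $\PSpc\La$ from \eqref{eq:Spec-decomposition}, observing that the embedding $\Ind\La\hookrightarrow\PSpc\La$ is onto precisely when the continuous factor vanishes. The paper's proof is a one-sentence citation of these two decompositions, while you have supplied the connecting details (Stone duality to pass from emptiness of $\Spec(\bfL((\PSpec\La)_{\mathrm c}))$ to $(\PSpec\La)_{\mathrm c}=0$, and the use of \eqref{eq:Spec-iso} and Lemma~\ref{le:spec-decomp-inj} to identify the Jensen--Lenzing decomposition of $X$ with the splitting induced by $P$); the ``delicate point'' you flag is exactly the content of the sentence preceding the proposition, where the paper asserts without elaboration that the module-level decomposition ``reflects'' the categorical one.
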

\begin{proof}
This follows from the above decomposition of the pure spectral
category $\PSpec\La$
and the corresponding decomposition of the Boolean spectrum
\eqref{eq:Spec-decomposition}.
\end{proof}

\begin{probl}
  Let $\La$ be a finite dimensional algebra of strictly wild representation
  type and $L$ a complete Boolean lattice. Is there a pure-injective
  $\La$-module $X$ such that its lattice of direct summands
  $\bfD(X)$ is isomorphic to $L$?
\end{probl}

The following result complements our extension of Crawley-Boevey's
correspondence for definable subcategories.

\begin{prop}\label{pr:fin-type}
  For an Artin algebra $\La$ the following are equivalent.
  \begin{enumerate}
  \item  The embedding $\Cl(\Ind\La)\to \Clop(\PSpc\La)$ is a bijection.
  \item Every pure-essentially closed subcategory of $\Mod\La$ is definable. 
  \item Every pure-cohomologically stable subcategory of $\Mod\La$ is definable. 
  \item The Ziegler spectrum $\Ind\La$ is a discrete space.
  \item The algebra $\La$ is of finite representation type.
  \end{enumerate}
\end{prop}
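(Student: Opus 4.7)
The plan is to prove the cycle $(5)\Rightarrow(1)$, $(1)\Leftrightarrow(2)$, $(1)\Leftrightarrow(3)$, $(1)\Rightarrow(4)$, and $(4)\Rightarrow(5)$. The implication $(4)\Rightarrow(5)$ is the classical theorem of Auslander and Crawley-Boevey that a discrete Ziegler spectrum forces finite representation type for an Artin algebra, and I would invoke this as a known result rather than reprove it.

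For $(5)\Rightarrow(1)$: finite representation type means $\Ind\La$ is finite and every pure-injective decomposes as a coproduct of indecomposable finite-length modules, so the pure spectral category $\PSpec\La$ is discrete with no continuous part. Consequently $\PSpc\La$ identifies with the finite discrete space $\Ind\La$, and both $\Cl(\Ind\La)$ and $\Clop(\PSpc\La)$ equal the full power set $\mathcal P(\Ind\La)$, so the embedding in $(1)$ is a bijection.

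For $(1)\Leftrightarrow(2)$ and $(1)\Leftrightarrow(3)$: Crawley-Boevey's correspondence \eqref{eq:Ziegler} identifies the definable subcategories of $\Mod\La$ with $\Cl(\Ind\La)$; Theorem \ref{th:pure-spectral} identifies the pure-essentially closed subcategories with $\Clop(\PSpc\La)$; and Theorem \ref{th:pure-stable} identifies the pure-cohomologically stable subcategories with $\Clop(\PSpc\La)$ as well. Under these identifications the embedding of $(1)$ becomes the natural inclusion of definable subcategories into pure-essentially closed (respectively pure-cohomologically stable) subcategories, so the embedding is a bijection precisely when that inclusion is an equality, giving $(2)$ (respectively $(3)$).

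For $(1)\Rightarrow(4)$: the decomposition $\PSpec\La=(\PSpec\La)_{\mathrm d}\times(\PSpec\La)_{\mathrm c}$ together with Lemma \ref{le:discrete} yields an embedding $\mathcal P(\Ind\La)\hookrightarrow\bfL(\PSpec\La)\cong\Clop(\PSpc\La)$ which sends each subset $S\subseteq\Ind\La$ to a clopen $U_S$ whose restriction to $\Ind\La$ is $S$. Assuming $(1)$ and using Theorem \ref{th:pure-spectral}, there is a definable subcategory $\D\subseteq\Mod\La$ with $\supp(\D)=U_S$, whence $\D\cap\Ind\La=S$. Since $\D\cap\Ind\La$ is Ziegler closed and $S$ was arbitrary, every subset of $\Ind\La$ is Ziegler closed and the Ziegler topology is discrete, giving $(4)$. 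The main obstacle is the deep classical input $(4)\Rightarrow(5)$; the remaining links are straightforward lattice-theoretic consequences of the correspondences already established in Theorems \ref{th:pure-spectral}, \ref{th:pure-stable}, and Crawley-Boevey.
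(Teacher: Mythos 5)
Your cycle $(5)\Rightarrow(1)\Rightarrow(4)\Rightarrow(5)$ together with $(1)\Leftrightarrow(2)$ and $(1)\Leftrightarrow(3)$ is logically sound, and your treatment of the latter two equivalences and of $(1)\Rightarrow(4)$ mirrors the paper's proof of $(2)\Rightarrow(4)$: one uses Lemma~\ref{le:discrete} and the decomposition of $\PSpec\La$ to see that every subset of $\Ind\La$ extends to a clopen of $\PSpc\La$, and then Theorem~\ref{th:pure-spectral} to conclude that each such subset is Ziegler closed. That part is correct.

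There is, however, a gap in your step $(5)\Rightarrow(1)$. From finite representation type you correctly deduce that every module decomposes into a coproduct of indecomposable finitely presented (hence pure-injective) modules, so $\PSpec\La$ is discrete, $\PSpc\La$ identifies with the finite discrete space $\Ind\La$, and $\Clop(\PSpc\La)=\mathcal{P}(\Ind\La)$. But you then also assert $\Cl(\Ind\La)=\mathcal{P}(\Ind\La)$ without justification. That assertion is exactly statement (4): the Ziegler topology on $\Ind\La$ is discrete. Nothing in your argument up to that point delivers this; $\Ind\La$ being a finite set does not force the Ziegler topology to be discrete unless you separately verify that singletons are closed, which for an Artin algebra of finite representation type follows from endofiniteness of the indecomposables. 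Without this the step is circular with the rest of your cycle. The paper avoids the issue by proving $(5)\Rightarrow(2)$ directly: it shows that any pure-essentially closed $\D$ is closed under filtered colimits (any such colimit is a pure quotient of a coproduct of objects of $\D$) and under products (a product of objects of $\D$ decomposes into indecomposables from $\D$ since all modules are endofinite), hence $\D$ is definable. You should either prove $(5)\Rightarrow(4)$ via endofiniteness explicitly, or switch to the paper's direct argument for $(5)\Rightarrow(2)$.

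One further remark on $(4)\Rightarrow(5)$: you label this "the main obstacle" and invoke it as a deep external theorem of Auslander and Crawley-Boevey, but the implication has a short elementary proof which the paper gives. The Ziegler spectrum $\Ind\La$ is quasi-compact (a basic fact from Ziegler's work), so if it is discrete it is finite; and for an Artin algebra every indecomposable finitely presented module is pure-injective and thus lies in $\Ind\La$, so finiteness of $\Ind\La$ bounds the number of indecomposable finitely presented modules, giving finite representation type. This is much lighter than, say, the first Brauer--Thrall theorem, and citing the latter would be overkill.
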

\begin{proof}
  (1) $\Leftrightarrow$ (2): This follows from Theorem~\ref{th:pure-spectral}. 

  (1) $\Leftrightarrow$ (3): This follows from Theorem~\ref{th:pure-stable}. 

  (2) $\Rightarrow$ (4): Every subset of $\PSpc\La$ contained in
  $\Ind\La$ is clopen, by Lemma~\ref{le:discrete}. It follows from
  Theorem~\ref{th:pure-spectral} that every subset of $\Ind\La$ is
  Ziegler closed.

  (4) $\Rightarrow$ (5): The space $\Ind\La$ is finite when it is
  discrete, because $\Ind\La$ is quasi-compact. For an Artin algebra,
  every indecomposable finitely presented module is pure-injective, so
  $\La$ is of finite representation type.

  (5) $\Rightarrow$ (2): When $\La$ is of finite representation type,
  then every $\La$-module is endofinite, and in particular
  pure-injective \cite[Theorem~13.2.10]{Kr2022}.  Let $\D$ be a
  pure-essentially closed subcategory. Then it follows that $\D$ is
  closed under pure subobjects and pure quotients. In particular, $\D$
  is closed under filtered colimits because any filtered colimit of
  objects in $\D$ is a pure quotient of a coproduct of objects in
  $\D$. Finite representation type implies that every $\La$-module
  decomposes into a coproduct of indecomposables, and every product of
  objects in $\D$ decomposes into indecomposables from $\D$, because
  only finitely many indecomposables are involved. This follows again
  from endofiniteness. Thus $\D$ is closed under products.
\end{proof}

We close our discussion of module categories with some remarks.

\begin{rem}
  Beyond finite representation type the correspondences for
  pure-essentially closed and pure-cohomologically stable subcategories
  (Theorems~\ref{th:pure-spectral} and \ref{th:pure-stable}) seem to
  be new. In particular, Proposition~\ref{pr:fin-type} shows that the
  extension of the Ziegler spectrum yields additional information
  whenever the ring is not of finite representation type. This is
  illustrated by looking at the category of abelian groups
  $\Ab=\Mod\bbZ$. Consider the Pr\"ufer group $\bbZ(p^\infty)$ given
  by a prime $p$. The direct sums of copies of $\bbZ(p^\infty)$ form a
  subcategory of $\Ab$ which is pure-essentially closed and pure-cohomologically stable, but not
  definable, because it is not closed under products. More precisely,
  the rationals $\bbQ$ arise as direct summand of a product of copies
  of $\bbZ(p^\infty)$, and then $\{\bbZ(p^\infty),\bbQ\}$ is the
  Ziegler closure of $\bbZ(p^\infty)$.

  The Ziegler spectrum plus all definable subcategories are well
  understood for various classes of rings, including the rings of
  finite representation type, commutative Dedekind domains, or tame
  hereditary algebras; see \cite{Pr2009} for a survey. For these rings
  it seems reasonable to expect similar descriptions for the
  pure-essentially closed and the pure-cohomologically stable
  subcategories. Note that in these examples $\Ind\La=\PSpc\La$; so
  the relevant subcategories are parameterised by all subsets of
  $\Ind\La$.
\end{rem}
  
\subsection*{Acknowledgements} 
It is a pleasure to thank Greg Stevenson for several discussions and
helpful comments on this work. Also, I am grateful to Ken Goodearl and
Jan Šťovíček for their valuable input. The work was supported by the
Deutsche Forschungsgemeinschaft (Project-ID 491392403 – TRR~358).


\begin{thebibliography}{10}

\bibitem{Ba2005} P. Balmer, \emph{The spectrum of prime ideals in
    tensor triangulated categories}, J. Reine
  Angew. Math. \textbf{588} (2005), 149--168.

\bibitem{BK1987} F. Borceux\ and\ G.~M. Kelly, On locales of
  localizations, J. Pure Appl. Algebra {\bf 46} (1987), no.~1, 1--34.

\bibitem{Br2018} M. Brandenburg, Rosenberg's reconstruction theorem,
  Expo. Math. {\bf 36} (2018), no.~1, 98--117.

\bibitem{Br1974} S. Brenner, Decomposition properties of some small
  diagrams of modules, in {\it Symposia Mathematica, Vol. XIII
    (Convegno di Gruppi Abeliani \& Convegno di Gruppi e loro
    Rappresentazioni, INDAM, Rome, 1972)}, 127--141, Academic Press,
  London, 1974.

\bibitem{Co1959} P.~M. Cohn, On the free product of associative rings,
  Math. Z. {\bf 71} (1959), 380--398.

\bibitem{CB1994} W. Crawley-Boevey, Locally finitely presented
  additive categories, Comm. Algebra {\bf 22} (1994), no.~5,
  1641--1674.

\bibitem{CB1998} W. Crawley-Boevey, Infinite-dimensional modules in
  the representation theory of finite-dimensional algebras, in {\it
    Algebras and modules, I (Trondheim, 1996)}, 29--54, CMS
  Conf. Proc., 23, Amer. Math. Soc., Providence, RI, 1998.

\bibitem{Da1997} J. Dauns, Module types, Rocky Mountain J. Math. {\bf
    27} (1997), no.~2, 503--557.
  
\bibitem{DST2019} M.~A. Dickmann, N. Schwartz\ and\ M. Tressl, {\it
    Spectral spaces}, New Mathematical Monographs, 35, Cambridge
  Univ. Press, Cambridge, 2019.

\bibitem{Fa1983} A. Facchini, Spectral categories and varieties of
  preadditive categories, J. Pure Appl. Algebra {\bf 29} (1983),
  no.~3, 219--239.

\bibitem{Fa1985} A. Facchini, Decompositions of algebraically compact
  modules, Pacific J. Math. {\bf 116} (1985), no.~1, 25--37.

\bibitem{Ga1962} P. Gabriel, Des cat\'egories ab\'eliennes,
  Bull. Soc. Math. France {\bf 90} (1962), 323--448.

\bibitem{GO1966} P. Gabriel\ and\ U. Oberst, Spektralkategorien und
  regul\"{a}re Ringe im von-Neumannschen Sinn, Math. Z. {\bf 92}
  (1966), 389--395.

\bibitem{Go1986} J.~S. Golan, {\it Torsion theories}, Pitman
  Monographs and Surveys in Pure and Applied Mathematics, 29, Longman
  Sci. Tech., Harlow, 1986.

\bibitem{Go1979} K.~R. Goodearl, {\it von Neumann regular rings},
  Monographs and Studies in Mathematics, 4, Pitman, Boston, MA, 1979.

\bibitem{GB1976} K.~R. Goodearl\ and\ A.~K. Boyle, Dimension theory
  for nonsingular injective modules, Mem. Amer. Math. Soc. {\bf 7}
  (1976), no.~177, {\rm viii}+112 pp.

\bibitem{GW2005} K.~R. Goodearl\ and\ F. Wehrung, The complete
  dimension theory of partially ordered systems with equivalence and
  orthogonality, Mem. Amer. Math. Soc. {\bf 176} (2005), no.~831,
  vii+117 pp.
  
\bibitem{GJ1981} L. Gruson\ and\ C.~U. Jensen, Dimensions
  cohomologiques reli\'{e}es aux foncteurs $\varprojlim\sp{(i)}$, in
  {\it Paul Dubreil and Marie-Paule Malliavin Algebra Seminar, 33rd
    Year (Paris, 1980)}, 234--294, Lecture Notes in Math., 867,
  Springer, Berlin, 1981.

\bibitem{He1997} I. Herzog, The Ziegler spectrum of a locally coherent
  Grothendieck category, Proc. London Math. Soc. (3) {\bf 74} (1997),
  no.~3, 503--558.

\bibitem{JL1989} C.~U. Jensen\ and\ H. Lenzing, {\it Model-theoretic
    algebra with particular emphasis on fields, rings, modules},
  Algebra, Logic and Applications, 2, Gordon and Breach, New York,
  1989.

\bibitem{Jo1982} P. T. Johnstone, \emph{Stone spaces}, Cambridge
  Stud. Adv. Math., vol.~3, Cambridge Univ. Press, Cambridge, 1982.

\bibitem{Ka2015} R. Kanda, Classification of categorical subspaces of
  locally Noetherian schemes, Doc. Math. {\bf 20} (2015), 1403--1465.

\bibitem{Ka1951} I. Kaplansky, Projections in Banach algebras, Ann. of
  Math. (2) {\bf 53} (1951), 235--249.
  
\bibitem{Kr1997} H. Krause, The spectrum of a locally coherent
  category, J. Pure Appl. Algebra {\bf 114} (1997), no.~3, 259--271.

\bibitem{Kr1998} H. Krause, Exactly definable categories, J. Algebra
  {\bf 201} (1998), no.~2, 456--492.

\bibitem{Kr2005} H. Krause, The stable derived category of a
  Noetherian scheme, Compos. Math. {\bf 141} (2005), no.~5,
  1128--1162.  

\bibitem{Kr2022} H. Krause, {\it Homological theory of representations}, 
 Cambridge Studies in Advanced Mathematics, Cambridge University
 Press,  195, Cambridge, 2022.

\bibitem{MT2022} H. Matsui\ and\ R. Takahashi, Filtrations in module
  categories, derived categories, and prime spectra,
  Int. Math. Res. Not. IMRN {\bf 2022}, no.~5, 3457--3492.

\bibitem{MvN1936} F.~J. Murray\ and\ J. von~Neumann, On rings of
  operators, Ann. of Math. (2) {\bf 37} (1936), no.~1, 116--229.

\bibitem{Ne1992} A. Neeman, The chromatic tower for $D(R)$, Topology {\bf
  31} (1992), no.~3, 519--532.

\bibitem{Pr1988} M. Prest, {\it Model theory and modules}, London
  Mathematical Society Lecture Note Series, 130, Cambridge
  Univ. Press, Cambridge, 1988.

\bibitem{Pr2009} M. Prest, {\it Purity, spectra and localisation},
  Encyclopedia of Mathematics and its Applications, 121, Cambridge
  Univ. Press, Cambridge, 2009.

\bibitem{Ro1967} J.-E. Roos, Locally distributive spectral categories
  and strongly regular rings, in {\it Reports of the Midwest Category
    Seminar}, 156--181, Lecture Notes in Math., No. 47, Springer,
  Berlin, 1967.

\bibitem{Ri2000} C.~M. Ringel, Infinite length modules. Some examples
  as introduction, in {\it Infinite length modules (Bielefeld, 1998)},
  1--73, Trends Math, Birkh\"{a}user, Basel, 2000.
  
\bibitem{Ro1995} A.~L. Rosenberg, {\it Noncommutative algebraic
    geometry and representations of quantized algebras}, Mathematics
  and its Applications, 330, Kluwer Acad. Publ., Dordrecht, 1995.

\bibitem{St1975} B.~T. Stenstr\"{o}m, {\it Rings of quotients}, Die
  Grundlehren der mathematischen Wissenschaften, Band 217, Springer,
  New York, 1975.

\bibitem{St2024} G. Stevenson, Localizing subcategories of
  Grothendieck categories, Preprint, 2024.

\bibitem{Ta2009} R. Takahashi, On localizing subcategories of derived
  categories, J. Math. Kyoto Univ. {\bf 49} (2009), no.~4, 771--783.
  
\bibitem{WM2024} K. Wu\ and\ X. Ma, A classification of some thick
  subcategories in locally noetherian Grothendieck categories,
  Glasg. Math. J. {\bf 66} (2024), no.~1, 175--182.

\bibitem{Zi1984} M. Ziegler, Model theory of modules, Ann. Pure
  Appl. Logic {\bf 26} (1984), no.~2, 149--213.

\end{thebibliography}
\end{document}